\newcommand{\driverOption}{}
  \renewcommand{\driverOption}{pdftex}
  \renewcommand{\driverOption}{dvips}
\newcommand{\hyperrefDriverOption}{}
	\renewcommand{\hyperrefDriverOption}{pdftex}
	\renewcommand{\hyperrefDriverOption}{hypertex}
\tikzset{node_white/.style={circle, draw, fill=white, inner sep=0pt, text width=0pt, text height=0pt, text depth=0pt, minimum size = 4pt}}
\tikzset{node_black/.style={circle, draw, fill=black, inner sep=0pt, text width=0pt, text height=0pt, text depth=0pt, minimum size = 4pt}}
\tikzset{marker_red/.style={rectangle, draw, fill=red, inner sep=0pt, text width=0pt, text height=0pt, text depth=0pt, minimum size = 7pt}}
\tikzset{marker_blue/.style={diamond, draw, fill=blue, inner sep=0pt, text width=0pt, text height=0pt, text depth=0pt, minimum size = 9pt}}
\tikzset{node_huge/.style={circle, draw, fill=white, inner sep=0pt, text width=0pt, text height=0pt, text depth=0pt, minimum size = 33pt}}
\tikzset{line_solid/.style={draw, thick}}
\tikzset{line_dashed/.style={draw, thick, dashed}}
\tikzset{line_dotted/.style={draw, thick, dotted}}
\tikzset{line_gray_bg/.style={draw=black!40!white, line width=7pt, line cap=round, line join=round}}
\tikzset{dashdot/.style={dash pattern=on .8pt off 2pt on 4pt off 2pt}}
\tikzset{dotdot/.style={dash pattern=on 1pt off 1pt}}
\tikzset{ptr/.style={decoration={markings,mark=at position 1 with {\arrow[scale=1.5,>=latex]{>}}},postaction={decorate}}}
  \newcommand{\PG}[1]{\marginpar{\parbox{4cm}{{\small {\bf PG:} #1}}}}
  \newcommand{\TM}[1]{\marginpar{\parbox{4cm}{{\small {\bf TM:} #1}}}}
  \newcommand{\SJ}[1]{\marginpar{\parbox{1.5cm}{{\tiny {\bf SJ:} #1\par}}}}
  \newcommand{\PG}[1]{}
  \newcommand{\TM}[1]{}
  \newcommand{\SJ}[1]{}
\newtheorem{theorem}{Theorem}
\newtheorem{lemma}[theorem]{Lemma}
\newtheorem{proposition}[theorem]{Proposition}
\newtheorem{problem}{Problem}
\theoremstyle{definition}
\theoremstyle{remark}
\g@addto@macro{\endabstract}{\@setabstract}
\newcommand{\authorfootnotes}{\renewcommand\thefootnote{\@fnsymbol\c@footnote}}%
\begin{document}

\begin{center}

\authorfootnotes
\LARGE Gray codes and symmetric chains\footnote{An extended abstract of this paper appeared in~\cite{DBLP:conf/icalp/GregorJMSW18}.}
\vspace{2mm}

\large
Petr~Gregor\footnote{E-Mail: \texttt{gregor@ktiml.mff.cuni.cz}. Petr Gregor was supported by GACR grant GA~19-08554S.}\textsuperscript{1},
Sven~J\"ager\footnote{E-Mail: \texttt{jaeger@math.tu-berlin.de}}\textsuperscript{2},
Torsten~M\"utze\footnote{E-Mail: \texttt{torsten.mutze@warwick.ac.uk}. Torsten M\"utze is also affiliated with Charles University, Faculty of Mathematics and Physics, and was supported by GACR grant GA~19-08554S, and by DFG grant~413902284.}\textsuperscript{3},
Joe Sawada\footnote{E-Mail: \texttt{jsawada@uoguelph.ca}}\textsuperscript{4},
Kaja Wille\footnote{E-Mail: \texttt{wille@math.tu-berlin.de}}\textsuperscript{2}
\setcounter{footnote}{0}
\bigskip

\small

\textsuperscript{1}Department of Theoretical Computer Science and Mathematical Logic, \\ Charles University, Prague, Czech Republic \par
\textsuperscript{2}Institut f\"{u}r Mathematik, Technische Universit\"{a}t Berlin, Germany \par
\textsuperscript{3}Department of Computer Science, University of Warwick, United Kingdom \par
\textsuperscript{4}School of Computer Science, University of Guelph, Canada \par

\bigskip

\begin{minipage}{0.8\linewidth}
\textsc{Abstract.}
We consider the problem of constructing a cyclic listing of all bitstrings of length~$2n+1$ with Hamming weights in the interval $[n+1-\ell,n+\ell]$, where $1\leq \ell\leq n+1$, by flipping a single bit in each step.
This is a far-ranging generalization of the well-known middle two levels problem (the case~$\ell=1$).
We provide a solution for the case~$\ell=2$, and we solve a relaxed version of the problem for general values of~$\ell$, by constructing cycle factors for those instances.
The proof of the first result uses the lexical matchings introduced by Kierstead and Trotter, which we generalize to arbitrary consecutive levels of the hypercube.
The proof of the second result uses symmetric chain decompositions of the hypercube, a concept known from the theory of posets.
We also present several new constructions of such decompositions based on lexical matchings.
In particular, we construct four pairwise edge-disjoint symmetric chain decompositions of the $n$-dimensional hypercube for any~$n\geq 12$.
\end{minipage}

\vspace{2mm}

\begin{minipage}{0.8\linewidth}
\textsc{Keywords:} Gray code, Hamilton cycle, hypercube, poset, symmetric chain
\end{minipage}

\vspace{2mm}

\end{center}

\vspace{2mm}

\section{Introduction}

Gray codes are named after Frank Gray, a researcher at Bell Labs, who described a simple method to generate all $2^n$~bitstrings of length~$n$ by flipping a single bit in each step~\cite{gray:patent}, now known as the binary reflected Gray code.
This code found widespread use, e.g., in circuit design and testing, signal processing and error correction, data compression etc.; many more applications are mentioned in the survey~\cite{MR1491049}.
The binary reflected Gray code is also implicit in the well-known \emph{Towers of Hanoi} puzzle and the \emph{Chinese ring} puzzle that date back to the 19th century.
The theory of Gray codes has developed considerably in the last decades, and the term is now used more generally to describe an exhaustive listing of any class of combinatorial objects where successive objects in the list differ by a small amount.
In particular, such generation algorithms have been developed for several fundamental combinatorial objects of interest for computer scientists, such as bitstrings, permutations, partitions, trees etc., all of which are covered in depth in the most recent volume of Knuth's seminal series \emph{The Art of Computer Programming}~\cite{MR3444818}.

Since the discovery of the binary reflected Gray code, there has been continued interest in developing Gray codes for bitstrings of length~$n$ that satisfy various additional constraints.
For instance, a Gray code with the property that each bit is flipped (almost) the same number of times was first constructed by Bakos~\cite[p.~28--37]{MR0242572}.
Goddyn and Gvozdjak~\cite{MR2014514} constructed an $n$-bit Gray code in which any two successive flips of the same bit are almost $n$~steps apart, which is best possible.
These are only two examples of a large body of work on possible Gray code transition sequences; see also~\cite{MR1377601, MR2427745, MR2974271}.
Savage and Winkler~\cite{MR1329390} constructed a Gray code that generates all $2^n$~bitstrings such that all bitstrings with Hamming weight~$k$ appear before all bitstrings with weight~$k+2$, for each $0\leq k\leq n-2$, where the \emph{Hamming weight} of a bitstring is the number of its 1-bits.
They used this construction to tackle the infamous \emph{middle two levels problem}, which asks for a cyclic listing of all bitstrings of length~$2n+1$ with weights in the interval~$[n,n+1]$ by flipping a single bit in each step.
This problem was raised in the 1980s and received considerable attention in the literature  (a detailed historic account is given in~\cite{MR3483129}).
A general existence proof for such a Gray code for any~$n\geq 1$ has been found only recently~\cite{MR3483129, gregor-muetze-nummenpalo:18}, and an algorithm for computing it using $\cO(1)$ amortized time and $\cO(n)$ space was subsequently presented in~\cite{MR4075363}.
The starting point of this work is the following more general problem raised independently by Buck and Wiedemann~\cite{MR737262},
Savage~\cite{MR1275228}, Gregor and {\v{S}}krekovski~\cite{MR2609124}, and by Shen and Williams~\cite{shen_williams_2015}.

\renewcommand\theproblem{\Alph{problem}}
\setcounter{problem}{12}  % want to call it 'Problem M'

\begin{problem}[middle $2\ell$~levels problem]
\label{prob:gmlc}
For any~$n\geq 1$ and $1\leq \ell\leq n+1$, construct a cyclic listing of all bitstrings of length~$2n+1$ with Hamming weights in the interval $[n+1-\ell,n+\ell]$ by flipping a single bit in each step.
\end{problem}

The special case~$\ell=1$ of Problem~\ref{prob:gmlc} is the middle two levels problem mentioned before.
The case~$\ell=n+1$ is solved by the binary reflected Gray code discussed in the beginning.
Moreover, the cases~$\ell=n$ and~$\ell=n-1$ were settled in~\cite{MR737262, MR1887372, locke-stong:03} and~\cite{MR2609124}, respectively.

A natural framework for studying such Gray code problems is the \emph{$n$-dimensional hypercube~$Q_n$}, or $n$-cube for short, the graph formed by all bitstrings of length~$n$, with an edge between any two bitstrings that differ in exactly one bit.
The 5-cube is illustrated in Figure~\ref{fig:q5}~(a).
The \emph{$k$th level} of the $n$-cube is the set of all bitstrings with Hamming weight exactly~$k$.
In this terminology, Problem~\ref{prob:gmlc} asks for a Hamilton cycle in the subgraph of the $(2n+1)$-cube induced by the middle $2\ell$~levels.

\begin{figure}
\makebox[0cm]{ % artificial box to center the picture
\includegraphics[scale=0.916]{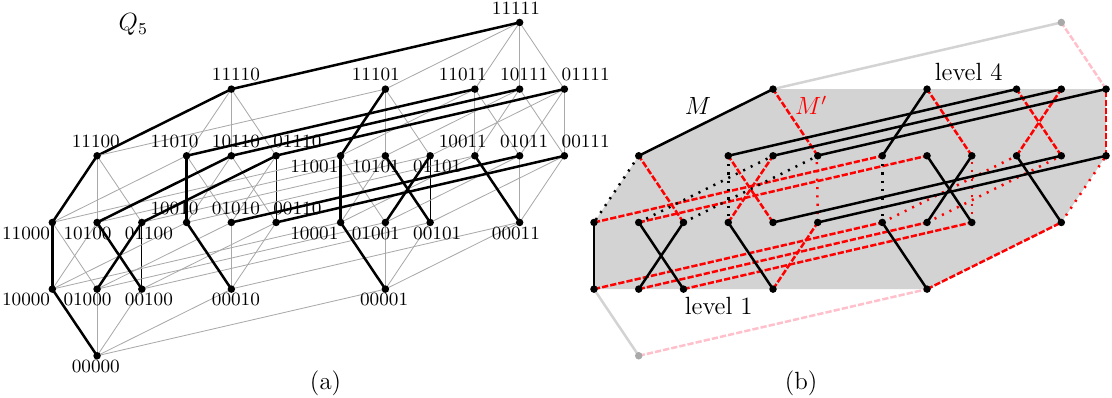}
}
\caption{(a) The 5-cube with the (standard) symmetric chain decomposition~$\cD_0$, where the edges along the chains are highlighted by thick lines.
(b) Building a cycle factor through the middle four levels of the 5-cube as explained in the proof of Theorem~\ref{thm:gmlc-2fac} with SCDs~$\cD:=\cD_0$ (black) and~$\cD':=\ol{\cD_0}$ (red).
The edges that are removed from~$\cD$ and~$\cD'$ are dotted, so the solid and dashed edges are the two matchings~$M$ and~$M'$ whose union forms the cycle factor.
It has three cycles of lengths 4, 4 and 22, visiting all 30 bitstrings with Hamming weight in the interval~$[1,4]$.
}
\label{fig:q5}
\end{figure}

The most general version of this problem is whether the subgraph of the $n$-cube induced by all levels in an arbitrary weight interval~$[a,b]$ has a Hamilton cycle.
However, unless we are in the cases covered by Problem~M (odd dimension and symmetric levels around the middle) or $n$ is even and $(a,b)=(0,n)$, the corresponding subgraph of the $n$-cube has two partition classes of different sizes, and thus cannot have a Hamilton cycle.
Nonetheless, we may still ask for a cycle that visits all vertices in the smaller partition class, or for a cyclic listing of all vertices in which only few transitions flip two instead of one bit, where `few' means the difference in size between the two partition classes.
Both of these are natural generalizations of a Hamilton cycle, and we will refer to them as an `almost' Hamilton cycle.
This generalized problem was solved in~\cite{MR3758308} for several values of~$n\geq 1$ and $0\leq a\leq b\leq n$, and it was shown that a solution to Problem~\ref{prob:gmlc} would imply this result for all possible values of~$n\geq 1$ and $0\leq a\leq b\leq n$.

\subsection{Our results}

In this work we solve the case~$\ell=2$ of Problem~\ref{prob:gmlc}, i.e., we construct a cyclic listing of all bitstrings of length~$2n+1$ with Hamming weights in the interval~$[n-1,n+2]$.

\begin{theorem}
\label{thm:gmlc4}
For any~$n\geq 1$, the subgraph of the $(2n+1)$-cube induced by the middle four levels has a Hamilton cycle.
\end{theorem}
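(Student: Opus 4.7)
The plan is to combine the cycle-factor construction announced in Theorem~\ref{thm:gmlc-2fac} (which produces a $2$-factor in the middle four levels from two edge-disjoint symmetric chain decompositions) with a cycle-merging procedure that locally modifies the factor until a single Hamilton cycle remains. Symmetric chain decompositions (SCDs) of the $(2n{+}1)$-cube are central: each chain that passes through all four middle levels induces a $3$-edge path, and each chain of length $2$ contributes a single edge between levels $n$ and $n+1$.

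First, I would construct a cycle factor $F$ in the middle four levels of $Q_{2n+1}$ using two edge-disjoint SCDs $\mathcal{D}$ and $\mathcal{D}'$ of the cube, as illustrated in Figure~\ref{fig:q5}(b). From each $3$-path induced by a chain of length $\geq 4$ I keep its two outer edges (between levels $n-1, n$ and $n+1, n+2$), and from each chain of length $2$ I keep its single edge, obtaining a perfect matching $M$ in the middle four levels. Doing this for both $\mathcal{D}$ and $\mathcal{D}'$ yields matchings $M$ and $M'$, and the assumed edge-disjointness ensures that $F := M \cup M'$ is a $2$-regular spanning subgraph, i.e., a cycle factor.

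Second, I would develop a toolbox of local flip operations on $F$. Each flip replaces a pair of matching edges by another pair consistent with the chain structure of $\mathcal{D}$ or $\mathcal{D}'$, leaving the result a cycle factor. When the two edges involved lie in distinct cycles of $F$, the flip merges those cycles into one. The key intermediate lemma will assert that as long as $F$ has more than one cycle, two of its cycles can be merged by some flip. By iterating, all cycles of $F$ collapse into a single Hamilton cycle.

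The main obstacle I anticipate is the global merging argument: showing that these local flips always suffice to reduce $F$ to one cycle. This will require a careful choice of the two SCDs (plausibly the standard SCD $\mathcal{D}_0$ and its complement $\overline{\mathcal{D}_0}$, in view of Figure~\ref{fig:q5}(b)), and particular attention to cycles arising from chains of length $2$, whose very rigid local structure limits the available flips. I also expect small values of $n$ to need separate verification by direct construction, after which the general construction yields the desired Hamilton cycle through the middle four levels.
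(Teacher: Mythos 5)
Your high-level strategy — build a cycle factor in the middle four levels and then merge cycles by local modifications — matches the paper's strategy, but your specific choice of cycle factor is exactly the one the paper rejects. You propose to start from the factor $M \cup M'$ arising from two edge-disjoint SCDs $\mathcal{D}_0$ and $\overline{\mathcal{D}_0}$, as in the proof of Theorem~\ref{thm:gmlc-2fac}. The paper explicitly states (opening of Section~\ref{sec:2factor4}) that it is \emph{unable} to analyze that factor, and instead constructs a different cycle factor $\mathcal{C}_{2n+1}$ built from $n$- and $(n{+}1)$-lexical matchings between the outer level pairs together with a carefully chosen edge set $E \subseteq M_{2n+1,n}^{n} \cup M_{2n+1,n}^{n-1} \cup M_{2n+1,n}^{n-2}$ between the middle two levels. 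The reason this particular factor is chosen is that its cycles admit a clean combinatorial description: Lemma~\ref{lem:tree-rot} identifies the cycles with equivalence classes of rooted trees $\mathcal{T}_{n+1}$ under a rotation map $\rho$, and then with plane trivalent trees. That structural handle is what makes the merging argument work. Without it, your plan stalls exactly at the step you flag as the ``main obstacle.''

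The second gap is the merging mechanism itself. You propose ``flip operations'' that swap pairs of matching edges consistent with the chain structure of $\mathcal{D}$ or $\mathcal{D}'$, but you give no concrete candidate family of flips, no argument that a flip joining two distinct cycles always exists, and no argument that flips can be applied simultaneously without interfering. The paper's solution is quite specific: it defines \emph{flippable pairs} $(x,y) \in D_{2n+1,n+1}^{=0}$ of path starting-vertices and associated $6$-cycles $C_6(x,y)$ lying entirely between levels $n{+}1$ and $n{+}2$ (Section~\ref{sec:flip}), proves via Lemma~\ref{lem:6cycles} that these $6$-cycles are pairwise edge-disjoint and non-interleaving along any path so that taking symmetric differences with a collection of them is safe, and then reduces everything to showing that an auxiliary graph $\mathcal{H}_{n+1}$ on the cycles of the factor is connected. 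Connectivity is proved by showing every rooted tree in $\mathcal{T}_{n+1}$ can be transformed to a canonical tree $s$ by heavy/light rotations (which stay within one node of $\mathcal{H}_{n+1}$) and pull operations (which cross edges of $\mathcal{H}_{n+1}$). None of this machinery is present or even anticipated in your sketch. Finally, your remark that ``small values of $n$ need separate verification by direct construction'' is not needed in the paper's argument; its construction and connectivity proof work uniformly for all $n \geq 1$.
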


Combining Theorem~\ref{thm:gmlc4} with the results from~\cite{MR3758308} shows more generally that the subgraph of the $n$-cube induced by any four consecutive levels has an `almost' Hamilton cycle.

As another partial result towards Problem~\ref{prob:gmlc}, we show that the subgraph of the $(2n+1)$-cube induced by the middle $2\ell$~levels has a cycle factor.
A \emph{cycle factor} is a collection of disjoint cycles which together visit all vertices of the graph.
In particular, a Hamilton cycle is a cycle factor consisting only of a single cycle.
Note here that the existence of a cycle factor for general values of~$\ell$ is not an immediate consequence of Hall's theorem, which is applicable only for~$\ell=1$ and~$\ell=n+1$, as only in those cases all vertices of the underlying graph have the same degree.

\begin{theorem}
\label{thm:gmlc-2fac}
For any~$n\geq 1$ and $1\leq \ell\leq n+1$, the subgraph of the $(2n+1)$-cube induced by the middle $2\ell$~levels has a cycle factor.
\end{theorem}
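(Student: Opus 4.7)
The plan is to construct the cycle factor as the edge-disjoint union of two perfect matchings in the bipartite subgraph $H\subseteq Q_{2n+1}$ induced by the middle $2\ell$ levels, with each perfect matching canonically extracted from a symmetric chain decomposition (SCD) of $Q_{2n+1}$. First I would check that $H$ is a balanced bipartite graph: the bipartition is by weight parity, and the two parts have equal cardinality because $\binom{2n+1}{k}=\binom{2n+1}{2n+1-k}$ while $k$ and $2n+1-k$ have opposite parities. A cycle factor of $H$ is then the same as a $2$-regular spanning subgraph, which is itself the edge-disjoint union of two perfect matchings of $H$.

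Next I would produce a canonical perfect matching $M(\cD)$ of $H$ from any SCD $\cD$ of $Q_{2n+1}$. Every chain in $\cD$ is symmetric about the middle edge between levels $n$ and $n+1$, hence is a path with an odd number $2k+1$ of edges and an even number $2k+2$ of vertices for some $k\ge 0$. Its intersection with $H$ is therefore also a path of odd edge-length: either the full chain, in case $k\le\ell-1$ so that the chain lies inside the middle $2\ell$ levels; or the length-$(2\ell-1)$ sub-path on the middle $2\ell$ vertices of the chain, in case $k\ge\ell$. Any such even-vertex path has a unique perfect matching, namely the alternating matching that contains its lowest edge, and taking the union of these matchings over all chains gives a perfect matching $M(\cD)$ of $H$.

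I would then apply this construction to $\cD:=\cD_0$, the standard bracket-matching SCD of $Q_{2n+1}$, and to $\cD':=\overline{\cD_0}$, the SCD obtained by applying bitwise complementation to every chain of $\cD_0$ (which is again an SCD because complementation swaps levels $k$ and $2n+1-k$ and therefore maps symmetric chains to symmetric chains). This yields perfect matchings $M:=M(\cD_0)$ and $M':=M(\overline{\cD_0})$ of $H$. It then suffices to establish the disjointness $M\cap M'=\varnothing$, for then $M\cup M'$ is a $2$-regular spanning subgraph of $H$, i.e., a cycle factor.

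The main obstacle is precisely this edge-disjointness. An edge $(x,y)$ with $y=x\oplus e_i$ lies in $\cD_0$ iff position $i$ is the rightmost unmatched $0$ of $x$ under the bracket matching (with $1=($ and $0=)$), and it lies in $\overline{\cD_0}$ iff $(\bar y,\bar x)\in\cD_0$, equivalently iff $i$ is the rightmost unmatched $0$ of $\bar y$. I would combine these two local characterisations with the position of the edge within its chain, using the involution $x\mapsto\bar x$ which maps paths of $\cD_0\cap H$ to paths of $\overline{\cD_0}\cap H$ while swapping their top and bottom endpoints, to argue that whenever an edge belongs to both SCDs, its positions (counted from the bottom) in its two respective path-restrictions have opposite parities, so it cannot be selected into both canonical matchings. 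Carrying out this parity argument, with cases according to whether the chains involved are contained entirely in the middle or are truncated to length $2\ell-1$, is where the bulk of the technical work would lie.
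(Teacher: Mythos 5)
Your proposal has essentially the same skeleton as the paper's proof: take $\cD_0$ and $\ol{\cD_0}$, restrict each chain to the middle $2\ell$ levels to obtain families of odd-length paths, extract from each family the alternating perfect matching that contains the lowest edge of every path, and argue that the two matchings are edge-disjoint so that their union is a cycle factor. Up to and including the construction of $M$ and $M'$, everything you write matches the paper. The divergence is in the final step, and that is where there is a genuine gap.

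You propose to establish $M\cap M'=\varnothing$ via a parity argument: if an edge $e$ lay in a chain of $\cD_0$ and also in a chain of $\ol{\cD_0}$, its positions from the bottom of the two restricted paths would have opposite parities. This claim is false as stated. Suppose $e$ joins levels $k$ and $k+1$, and both the $\cD_0$-chain $C$ and the $\ol{\cD_0}$-chain $C'$ through $e$ overshoot the middle $2\ell$ levels and get truncated. Then both restricted paths start at level $n+1-\ell$, so $e$ sits at position $k-(n+1-\ell)+1=k-n+\ell$ from the bottom in \emph{each} of them: same position, same parity. The case analysis you flag (``truncated vs.\ not'') therefore cannot rescue the argument; in the fully-truncated case the parities agree rather than disagree, and your argument would let $e$ into both $M$ and $M'$.

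The correct resolution is both simpler and stronger, and it is what the paper does: $\cD_0$ and $\ol{\cD_0}$ are \emph{edge-disjoint SCDs} of $Q_{2n+1}$, i.e., no edge lies on a chain of both decompositions. This is the Shearer--Kleitman observation cited in the paper. Granted this, the restricted path families share no edges whatsoever, so $M\subseteq E(\cD_0)$ and $M'\subseteq E(\ol{\cD_0})$ are disjoint for free, with no parity bookkeeping. Your proposal never invokes this fact, and the parity claim you substitute for it is only true vacuously (because no shared edge exists); as a standalone argument it does not go through. To close the gap you should either cite the edge-disjointness of $\cD_0$ and $\ol{\cD_0}$ directly, or prove it, e.g., by observing that the bit flipped to move up from $x$ in $\cD_0$ (the leftmost unmatched opening bracket) is never the bit flipped to move up from $x$ in $\ol{\cD_0}$; a parity analysis of positions within truncated chains is not the right tool.
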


Our proof of Theorem~\ref{thm:gmlc-2fac} is concise and illustrative, and it motivates the subsequent discussion, so we present it right now.
It uses a well-known concept from the theory of partially ordered sets (posets), a so-called symmetric chain decomposition.
Here we define this term for the $n$-cube using graph-theoretic language.
A \emph{symmetric chain} in~$Q_n$ is a path $(x_k,x_{k+1},\ldots,x_{n-k})$ in the $n$-cube where $x_i$ is from level~$i$ for all $k\leq i\leq n-k$, and a \emph{symmetric chain decomposition}, or SCD for short, is a partition of the vertices of~$Q_n$ into symmetric chains.
For illustration, an SCD of~$Q_5$ is shown in Figure~\ref{fig:q5}~(a).
We say that two SCDs are \emph{edge-disjoint} if the corresponding sets of paths are edge-disjoint, i.e., if there are no two consecutive vertices in a chain of the first SCD that are also contained in a chain of the second SCD.
There is a well-known construction of two edge-disjoint SCDs in the $n$-cube for any~$n\geq 1$~\cite{MR532807}, which we will discuss momentarily.

\begin{proof}[Proof of Theorem~\ref{thm:gmlc-2fac}]
The proof is illustrated in Figure~\ref{fig:q5}~(b).
Consider two edge-disjoint SCDs~$\cD$ and~$\cD'$ in the $(2n+1)$-cube.
Let~$\cR$ and~$\cR'$ be the chains obtained from~$\cD$ and~$\cD'$, respectively, by restricting them to the middle $2\ell$~levels, so chains that are longer than~$2\ell-1$ get shortened on both sides.
As all chains in~$\cR$ and~$\cR'$ start and end at symmetric levels and the dimension~$2n+1$ is odd, all these paths have odd length (possible lengths are $1,3,\ldots,2\ell-1$).
Therefore, by taking every second edge on every path from~$\cR$ and~$\cR'$, we obtain two perfect matchings~$M$ and~$M'$ in the subgraph of the $(2n+1)$-cube induced by the middle $2\ell$~levels.
As the paths in~$\cR$ and~$\cR'$ are edge-disjoint, the matchings~$M$ and~$M'$ are also edge-disjoint.
Therefore, the union of~$M$ and~$M'$ is the desired cycle factor.
\end{proof}

This proof motivates the search for a large collection of pairwise edge-disjoint SCDs in the $n$-cube.
We can then use any two of them to construct a cycle factor as described in the previous proof, and use this cycle factor as a starting point for building a Hamilton cycle.
This two-step approach of building a Hamilton cycle via a cycle factor proved to be very successful for such problems (see e.g.~\cite{MR2548540, MR2836824, MR2925746, MR3483129, MR3599935, DBLP:conf/soda/SawadaW18, muetze-nummenpalo-walczak:21}).
Consequently, for the rest of this section we focus on edge-disjoint SCDs in the $n$-cube.

There is a well-known construction of an SCD for the $n$-cube that is best described by the following parenthesis matching approach pioneered by Greene and Kleitman~\cite{MR0389608}; see Figure~\ref{fig:paren}.
For any vertex~$x$ of the $n$-cube, we interpret the 0s in~$x$ as opening brackets and the 1s as closing brackets.
By matching closest pairs of opening and closing brackets in the natural way, the chain containing~$x$ is obtained by flipping the leftmost unmatched 0 to move up the chain, or the rightmost unmatched~1 to move down the chain, until no more unmatched bits can be flipped.
It is easy to see that this indeed yields an SCD of the $n$-cube for any~$n\geq 1$. 
We denote this standard SCD by~$\cD_0$; it is shown in Figure~\ref{fig:q5}~(a) for~$n=5$.
There are several alternative ways to describe this SCD (see~\cite{MR0043115, MR0319772, MR0450151}).

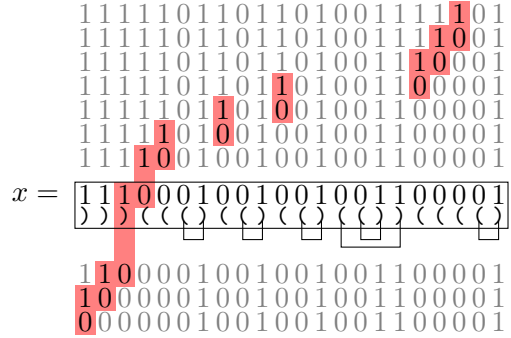
\begin{figure}
\begin{tikzpicture}[every node/.style={gray, inner sep=1pt}]
 \pgfsetmatrixcolumnsep{0pt}
 \pgfsetmatrixrowsep{0pt}
 \matrix (m) [matrix of nodes] {
  1&1&1&1&1&0&1&1&0&1&1&0&1&0&0&1&1&1&1&|[black]|1&0&1 \\
  1&1&1&1&1&0&1&1&0&1&1&0&1&0&0&1&1&1&|[black]|1&|[black]|0&0&1 \\
  1&1&1&1&1&0&1&1&0&1&1&0&1&0&0&1&1&|[black]|1&|[black]|0&0&0&1 \\
  1&1&1&1&1&0&1&1&0&1&|[black]|1&0&1&0&0&1&1&|[black]|0&0&0&0&1 \\
  1&1&1&1&1&0&1&|[black]|1&0&1&|[black]|0&0&1&0&0&1&1&0&0&0&0&1 \\
  1&1&1&1&|[black]|1&0&1&|[black]|0&0&1&0&0&1&0&0&1&1&0&0&0&0&1 \\
  1&1&1&|[black]|1&|[black]|0&0&1&0&0&1&0&0&1&0&0&1&1&0&0&0&0&1 \\[5pt]
  |[black]|1&|[black]|1&|[black]|1&|[black]|0&|[black]|0&|[black]|0&|[black]|1&|[black]|0&|[black]|0&|[black]|1&|[black]|0&|[black]|0&|[black]|1&|[black]|0&|[black]|0&|[black]|1&|[black]|1&|[black]|0&|[black]|0&|[black]|0&|[black]|0&|[black]|1 \\[-1pt]
  |[black]|\footnotesize\texttt)&|[black]|\footnotesize\texttt)&|[black]|\footnotesize\texttt)&|[black]|\footnotesize\texttt(&|[black]|\footnotesize\texttt(&|[black]|\footnotesize\texttt(&|[black]|\footnotesize\texttt)&|[black]|\footnotesize\texttt(&|[black]|\footnotesize\texttt(&|[black]|\footnotesize\texttt)&|[black]|\footnotesize\texttt(&|[black]|\footnotesize\texttt(&|[black]|\footnotesize\texttt)&|[black]|\footnotesize\texttt(&|[black]|\footnotesize\texttt(&|[black]|\footnotesize\texttt)&|[black]|\footnotesize\texttt)&|[black]|\footnotesize\texttt(&|[black]|\footnotesize\texttt(&|[black]|\footnotesize\texttt(&|[black]|\footnotesize\texttt(&|[black]|\footnotesize\texttt) \\[13pt]
  1&|[black]|1&|[black]|0&0&0&0&1&0&0&1&0&0&1&0&0&1&1&0&0&0&0&1 \\
  |[black]|1&|[black]|0&0&0&0&0&1&0&0&1&0&0&1&0&0&1&1&0&0&0&0&1 \\
  |[black]|0&0&0&0&0&0&1&0&0&1&0&0&1&0&0&1&1&0&0&0&0&1 \\
 };
 \draw ($(m-9-6.south)+(0,0.1)$) -- ($(m-9-6.south)-(0,0.15)$) -- ($(m-9-7.south)-(0,0.15)$) -- ($(m-9-7.south)+(0,0.1)$);
 \draw ($(m-9-9.south)+(0,0.1)$) -- ($(m-9-9.south)-(0,0.15)$) -- ($(m-9-10.south)-(0,0.15)$) -- ($(m-9-10.south)+(0,0.1)$);
 \draw ($(m-9-12.south)+(0,0.1)$) -- ($(m-9-12.south)-(0,0.15)$) -- ($(m-9-13.south)-(0,0.15)$) -- ($(m-9-13.south)+(0,0.1)$);
 \draw ($(m-9-15.south)+(0,0.1)$) -- ($(m-9-15.south)-(0,0.15)$) -- ($(m-9-16.south)-(0,0.15)$) -- ($(m-9-16.south)+(0,0.1)$);
 \draw ($(m-9-14.south)+(0,0.1)$) -- ($(m-9-14.south)-(0,0.25)$) -- ($(m-9-17.south)-(0,0.25)$) -- ($(m-9-17.south)+(0,0.1)$); 
 \draw ($(m-9-21.south)+(0,0.1)$) -- ($(m-9-21.south)-(0,0.15)$) -- ($(m-9-22.south)-(0,0.15)$) -- ($(m-9-22.south)+(0,0.1)$);
 \draw (m-8-1.north west) rectangle (m-9-22.south east);
 
\begin{scope}[on background layer]
 \fill[red!50] (m-8-3.north west) rectangle (m-10-3.south east);
 \fill[red!50] (m-10-2.north west) rectangle (m-11-2.south east);
 \fill[red!50] (m-11-1.north west) rectangle (m-12-1.south east);
 \fill[red!50] (m-7-4.north west) rectangle (m-8-4.south east);
 \fill[red!50] (m-6-5.north west) rectangle (m-7-5.south east);
 \fill[red!50] (m-5-8.north west) rectangle (m-6-8.south east);
 \fill[red!50] (m-4-11.north west) rectangle (m-5-11.south east);
 \fill[red!50] (m-3-18.north west) rectangle (m-4-18.south east);
 \fill[red!50] (m-2-19.north west) rectangle (m-3-19.south east);
 \fill[red!50] (m-1-20.north west) rectangle (m-2-20.south east);
\end{scope}

\node[left,anchor=east,black,xshift=-2mm] at (m-8-1) {$x={}$};
\end{tikzpicture}
\caption{The parenthesis matching approach for constructing the symmetric chain containing a bitstring~$x$, yielding the symmetric chain decomposition~$\cD_0$.
The highlighted bits are the leftmost unmatched~0 and the rightmost unmatched~1 in each bitstring.
}
\label{fig:paren}
\end{figure}

By taking complements, we obtain another SCD, which we denote by~$\ol{\cD_0}$.
It is not hard to see that~$\cD_0$ and~$\ol{\cD_0}$ are in fact edge-disjoint for any~$n\geq 1$~\cite{MR532807}.
Figure~\ref{fig:q5}~(b) shows both SCDs for~$n=5$, and how they are used for building a cycle factor.

Our next result is a simple construction of another SCD in the $n$-cube for \emph{even} values of~$n\geq 2$, which we call~$\cD_1$.
It has the additional feature that~$\cD_0$, $\ol{\cD_0}$, $\cD_1$ and~$\ol{\cD_1}$ are pairwise edge-disjoint for~$n\geq 6$.

\begin{theorem}
\label{thm:4scds-even}
For any even~$n\geq 6$, the $n$-cube contains four pairwise edge-disjoint SCDs.
\end{theorem}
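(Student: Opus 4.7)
The plan is to combine the classical fact (from~\cite{MR532807}) that $\cD_0$ and $\overline{\cD_0}$ are edge-disjoint with new edge-disjointness arguments involving the lattice-path SCD $\cD_1$ constructed in Section~\ref{sec:scds}. Among the six unordered pairs from the set $\{\cD_0, \overline{\cD_0}, \cD_1, \overline{\cD_1}\}$, one pair is handled by the known result. I would first exploit the fact that bitwise complementation $x\mapsto \overline{x}$ is an automorphism of $Q_n$ that simultaneously swaps each $\cD$ with its complement~$\overline{\cD}$. This symmetry gives the equivalences of edge-disjointness statements $(\cD_0,\cD_1)\leftrightarrow (\overline{\cD_0},\overline{\cD_1})$ and $(\cD_0,\overline{\cD_1})\leftrightarrow (\overline{\cD_0},\cD_1)$. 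Hence only three independent disjointness claims remain: $\cD_0\cap \cD_1=\emptyset$, $\cD_0\cap \overline{\cD_1}=\emptyset$, and $\cD_1\cap \overline{\cD_1}=\emptyset$ (as edge sets).

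To attack each of these, I would characterize, for every vertex $x$ of $Q_n$ that is not the top of its chain, the unique bit-position $u_0(x)$ (respectively $u_1(x)$) flipped when moving up the chain of $\cD_0$ (respectively $\cD_1$) containing~$x$. For $\cD_0$ this is the leftmost unmatched $0$ in the Greene--Kleitman parenthesis matching; for $\cD_1$ it is an analogous position read off from the lattice-path encoding introduced in Section~\ref{sec:scds}. An edge $\{x,x\oplus e_i\}$ belongs to~$\cD_0$ iff, taking $x$ to be the lower endpoint, $i=u_0(x)$, and similarly for~$\cD_1$; edge-disjointness of $\cD_0$ and $\cD_1$ therefore reduces to proving $u_0(x)\neq u_1(x)$ whenever both are defined. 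For $\cD_0$ versus $\overline{\cD_1}$, I would rewrite the up-flip of $\overline{\cD_1}$ at $x$ as the down-flip of $\cD_1$ at $\overline{x}$, and again compare positions with the parenthesis rule. The self-dual case $\cD_1\cap\overline{\cD_1}=\emptyset$ is the exact analogue of the classical argument for $\cD_0\cap\overline{\cD_0}=\emptyset$: the up-flip of $\overline{\cD_1}$ at $x$ is the complement image of the down-flip of $\cD_1$, and one checks that in $\cD_1$ the positions of the up-flip at~$x$ and the down-flip at~$\overline{x}$ never coincide.

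The main obstacle will be to describe $u_1$ and the corresponding down-flip position $d_1$ from the lattice-path viewpoint in a form compatible with the bracket description of $u_0,d_0$, so that the three position comparisons become genuinely checkable case analyses on the local structure of the encoding near the bit being flipped. A secondary obstacle is the dimension threshold: small cases ($n=2$ and $n=4$) are likely to admit coincidences $u_0(x)=u_1(x)$ or $u_1(x)=d_1(\overline{x})$ that make the four SCDs fail to be pairwise edge-disjoint, so the case analysis has to be carried out carefully enough to pinpoint exactly where the hypothesis $n\geq 6$ enters, and the residual small cases need to be ruled out or checked separately. Once the two up-flip rules are aligned, each of the three comparisons should boil down to a short combinatorial argument on the interplay between parenthesis matching and lattice paths.
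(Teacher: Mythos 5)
Your outline deviates from the paper's route in a way that is plausible on its face but leaves two essential gaps, one of which is fatal as written.

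First, the paper's proof of Theorem~\ref{thm:4scds-even} must establish not only that the four decompositions are pairwise edge-disjoint, but also that $\cD_1$ (and hence $\ol{\cD_1}$) \emph{is} an SCD at all. From the labeling rule defining $C_1(x)$ it is clear that each chain is symmetric, but it is not at all clear that the chains $C_1(x)$, $x\in L_{n,n/2}$, are pairwise disjoint and cover every vertex of~$Q_n$. Your proposal treats ``the lattice-path SCD $\cD_1$'' as a given and jumps straight to disjointness. The paper closes this gap by showing that $\cD_1$ coincides with $\bigcup_k M^1_{n,k}$, the union of all $1$-lexical matchings, which is a chain decomposition by Lemma~\ref{lem:lex}~(i); this identification does the real work. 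Without an argument of this kind, there is nothing yet to prove edge-disjointness for.

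Second, the lexical-matching framework also gives the edge-disjointness for free, whereas your position-comparison plan has to be carried out in detail for each of the three inequivalent pairs, and you explicitly defer both obstacles you name (aligning the bracket and lattice-path descriptions, and locating where $n\geq 6$ is used) rather than resolving them. The paper's argument is much shorter: $\cD_0=\cD_{(0,\ldots,0)}$ and $\cD_1=\cD_{(1,\ldots,1)}$ are unions of lexical matchings indexed by constant sequences, and by Lemma~\ref{lem:lex}~(iii) $\ol{\cD_0}$ and $\ol{\cD_1}$ are the unions indexed by the sequences $(l_k)_k$ and $(l_k-1)_k$ with $l_k=\max\{k,n-k-1\}$. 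By Lemma~\ref{lem:lex}~(ii) two such decompositions are edge-disjoint iff their index sequences differ at \emph{every} level, which for these four sequences reduces to $l_k\geq 3$ for all $k$ — i.e.\ $n/2\geq 3$, exactly the threshold $n\geq 6$. This simultaneously verifies all six pairs and exhibits the role of the hypothesis transparently, something your plan only hopes to discover by case analysis. Your reduction to three independent disjointness claims via the complementation automorphism is correct and would mirror the symmetry in Lemma~\ref{lem:lex}~(iii), but as presented the proposal does not constitute a proof: the SCD property of $\cD_1$ is unaddressed and the three comparisons are left as unsolved ``obstacles.''
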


Figure~\ref{fig:d01} shows the SCDs~$\cD_0$ and~$\cD_1$ in~$Q_6$.
Their complements~$\ol{\cD_0}$ and~$\ol{\cD_1}$ are not shown for clarity.
Note that four edge-disjoint SCDs are best possible for~$Q_6$, as they use up all edges incident with the middle level.

For odd values of~$n$, we can still construct four edge-disjoint SCDs in the $n$-cube (except in a few small cases).
However, the construction is not as direct and explicit as for even~$n$.

\begin{theorem}
\label{thm:4scds-odd}
For~$n=7$ and any odd~$n\geq 13$, the $n$-cube contains four pairwise edge-disjoint SCDs.
\end{theorem}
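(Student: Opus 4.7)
The plan is to lift the four pairwise edge-disjoint SCDs of $Q_{n-1}$ provided by Theorem~\ref{thm:4scds-even} to four edge-disjoint SCDs of $Q_n$. For odd $n\geq 13$ the dimension $n-1\geq 12$ is even, so four pairwise edge-disjoint SCDs $\cE_1,\cE_2,\cE_3,\cE_4$ of $Q_{n-1}$ are available. The natural lifting is the classical suspension construction: each chain $C=(x_k,x_{k+1},\ldots,x_{n-1-k})$ of an SCD of $Q_{n-1}$ is replaced in $Q_n$ by two symmetric chains, a long chain $(0x_k,0x_{k+1},\ldots,0x_{n-1-k},1x_{n-1-k})$ and a short chain $(1x_k,1x_{k+1},\ldots,1x_{n-2-k})$; singleton chains of $\cE_i$ at the middle level of $Q_{n-1}$ contribute only the long chain, which is a single edge. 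Applying this to each $\cE_i$ yields four SCDs of $Q_n$.

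The edges of these lifts split into three types: \emph{0-edges} $(0u,0v)$ that are copies of edges of $\cE_i$ in the lower half of $Q_n$, \emph{1-edges} $(1u,1v)$ that are copies of edges of $\cE_i$ in the upper half, and \emph{crossing edges} $(0x,1x)$ at the top vertex $x$ of each chain of $\cE_i$. Since the 0-edges (resp.\ 1-edges) of the four lifts are in bijection with the edges of the base SCDs, their pairwise edge-disjointness in $Q_n$ is inherited directly from the edge-disjointness of $\cE_1,\ldots,\cE_4$ in $Q_{n-1}$. All of the work therefore concentrates on the crossing edges.

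The main obstacle is that two lifts collide on a crossing edge exactly when two of the $\cE_i$ share a chain-top vertex; worst of all, a middle-level singleton that occurs simultaneously in two of the $\cE_i$ produces an immediate collision on the single edge comprising that lifted chain. To resolve this I would introduce a dual suspension that places the crossing edge at the \emph{bottom} vertex $x_k$ of each chain instead of the top (the long chain being $(0x_k,1x_k,1x_{k+1},\ldots,1x_{n-1-k})$ and the short chain $(0x_{k+1},\ldots,0x_{n-2-k})$), and allow each individual chain of each $\cE_i$ to choose independently between the top and the bottom variant. The crossing-disjointness requirement then becomes a combinatorial selection problem on the matching edges of $Q_n$ between levels $\lfloor n/2\rfloor$ and $\lceil n/2\rceil$ (and the adjacent matching between chain-bottom levels). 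I would solve this selection problem by exploiting the explicit parenthesis-matching and lattice-path descriptions of $\cD_0,\ol{\cD_0},\cD_1,\ol{\cD_1}$ from Theorem~\ref{thm:4scds-even}, from which the chain-top and chain-bottom sets of each $\cE_i$ can be read off and the top-or-bottom assignment made explicitly.

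I expect this crossing analysis to be the hardest step, and the restriction $n\geq 13$ to be forced by its requirements: the top/bottom flexibility only suffices once the base dimension $n-1\geq 12$ is large enough to admit all needed re-routings, which explains the exclusion of $n\in\{9,11\}$. For the isolated case $n=7$ the general scheme need not apply, and I would instead give a direct, hand- or computer-verified construction of four pairwise edge-disjoint SCDs in $Q_7$, which is small enough to admit such an ad hoc treatment.
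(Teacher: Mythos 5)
Your approach for $n=7$ (a direct ad hoc construction) matches the paper, which constructs four edge-disjoint SCDs in the necklace multigraph $N_7$ and lifts them to $Q_7$ (Lemma~\ref{lem:q57}). For odd $n\geq 13$, however, your approach diverges and has a genuine gap. The paper applies Theorem~\ref{thm:prod} to $Q_{n-7}\times Q_7$: since $n-7\geq 6$ is even, Theorem~\ref{thm:4scds-even} supplies four edge-disjoint SCDs in $Q_{n-7}$, Lemma~\ref{lem:q57} supplies four in $Q_7$, and the product theorem immediately yields four in $Q_n$. That also transparently explains the exclusions $n=9,11$: for those $n-7\in\{2,4\}$, and Table~\ref{tab:small-scds} shows $Q_2$ and $Q_4$ have only two and three edge-disjoint SCDs respectively.

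Your plan instead lifts the four SCDs from $Q_{n-1}$ to $Q_n$ via a suspension. Note this is \emph{not} an instance of Theorem~\ref{thm:prod}: that theorem requires $k$ edge-disjoint SCDs in \emph{both} factors, and $Q_1$ has only one SCD. You correctly identify that the crossing edges $(0x,1x)$ are the obstruction, but the proposed fix --- independently choosing a top or bottom crossing edge per chain --- is not carried out, and it is not clear it can be. In particular, the top/bottom toggle offers no freedom at all for singleton chains: a length-0 chain $(x)$ at the middle level of $Q_{n-1}$ lifts to the single edge $(0x,1x)$ with no alternative, so if any two of $\cD_0,\ol{\cD_0},\cD_1,\ol{\cD_1}$ happened to share a singleton, a collision would be unavoidable. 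Whether they do is not addressed, and even granting that, the global consistency of the top/bottom assignment across the four lifts is a nontrivial selection problem that you leave as "I would solve this ... by exploiting the explicit descriptions." That selection problem is precisely the content that would need to be proved; as written, the hardest step is missing. Your explanation for the bound $n\geq 13$ ("the top/bottom flexibility only suffices once $n-1\geq 12$") is likewise speculative and does not correspond to any actual argument.
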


For odd~$n$, we can combine any two of the four edge-disjoint SCDs in the $n$-cube guaranteed by Theorem~\ref{thm:4scds-odd} to a cycle factor in the middle $2\ell$~levels, as explained before, yielding in total $\binom{4}{2}=6$ distinct cycle factors.
Our technique for proving Theorem~\ref{thm:4scds-odd} is the reason why the cases~$n=9$ and~$n=11$ are excluded in the statement of the theorem.
Specifically, we construct four edge-disjoint SCDs in the 7-cube in an ad hoc fashion and then apply the following product construction.

\begin{theorem}
\label{thm:prod}
If~$Q_a$ and~$Q_b$ each contain $k$~pairwise edge-disjoint SCDs, then~$Q_{a+b}$ contains~$k$ pairwise edge-disjoint SCDs.
\end{theorem}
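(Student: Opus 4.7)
The plan is to use the classical ``product'' construction for SCDs together with a simple edge-tracking observation. I identify $Q_{a+b}$ with the Cartesian product $Q_a \times Q_b$ by splitting each bitstring of length $a+b$ into its first $a$ and last $b$ coordinates, and I call an edge of $Q_{a+b}$ \emph{horizontal} if the flipped bit lies in the first $a$ positions and \emph{vertical} otherwise.

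First I would define, for any two symmetric chains $C=(c_0,c_1,\ldots,c_p)$ of $Q_a$ (with $c_i$ at level $k+i$, so $p=a-2k$) and $C'=(c'_0,c'_1,\ldots,c'_q)$ of $Q_b$ (with $c'_j$ at level $l+j$, so $q=b-2l$), a \emph{staircase decomposition} of the grid $\{(c_i,c'_j):0\leq i\leq p,\ 0\leq j\leq q\}$ into $\min(p,q)+1$ symmetric chains of $Q_{a+b}$. Assuming $p\leq q$ by symmetry, the $r$-th staircase (for $r=0,1,\ldots,p$) would start at $(c_r,c'_0)$, ascend vertically through $(c_r,c'_1),\ldots,(c_r,c'_{q-r})$, and then ascend horizontally to $(c_{r+1},c'_{q-r}),\ldots,(c_p,c'_{q-r})$. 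A routine check shows that this path visits $p+q-2r+1$ vertices at consecutive levels from $k+l+r$ to $a+b-(k+l+r)$ of $Q_{a+b}$, hence is a symmetric chain, and that the $p+1$ staircases partition the grid. Running this over all pairs $(C,C')\in\cD\times\cD'$ then yields an SCD of $Q_{a+b}$ that I would denote $\cD\otimes\cD'$.

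The key observation is that every edge used by $\cD\otimes\cD'$ is a ``lift'' of an edge of the input SCDs: every horizontal edge of $\cD\otimes\cD'$ has the form $\{(c_i,c'_j),(c_{i+1},c'_j)\}$ with $\{c_i,c_{i+1}\}$ an edge of some $C\in\cD$, and every vertical edge projects analogously onto an edge of $\cD'$. Given pairwise edge-disjoint SCDs $\cD_1,\ldots,\cD_k$ of $Q_a$ and $\cD'_1,\ldots,\cD'_k$ of $Q_b$, I would therefore set $\cE_s:=\cD_s\otimes\cD'_s$ for $s=1,\ldots,k$: if $\cE_s$ and $\cE_t$ ($s\neq t$) shared a horizontal edge, projecting it onto the first $a$ coordinates would produce an edge common to $\cD_s$ and $\cD_t$, contradicting their edge-disjointness, and the symmetric argument rules out a shared vertical edge.

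The only real work is verifying that the staircases genuinely partition the $(p+1)\times(q+1)$ grid into symmetric chains of $Q_{a+b}$; this is routine level bookkeeping and I do not anticipate a serious obstacle. What is new is just the edge-tracking observation, which lets the standard product SCD be assembled in $k$ ``parallel'' copies that remain pairwise edge-disjoint.
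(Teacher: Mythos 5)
Your proposal is correct and matches the paper's proof essentially verbatim: the paper likewise identifies $Q_{a+b}$ with $Q_a\times Q_b$, decomposes each grid $A\times B$ of chain products into ``staircase'' symmetric chains (it happens to run along the $Q_a$ coordinate first rather than $Q_b$, an immaterial choice), and concludes edge-disjointness from the same observation that each staircase edge projects onto an edge of the corresponding input SCD. The only cosmetic difference is that you spell out the projection argument as a separate ``key observation,'' whereas the paper bakes it into the statement that $\cC_i$ uses only edges whose projections are $\cA_i$- or $\cB_i$-edges.
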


Theorem~\ref{thm:prod} shows in particular that from~$k$ edge-disjoint SCDs in a hypercube of fixed dimension~$n$, we obtain $k$~edge-disjoint SCDs for infinitely many larger dimensions $2n,3n,4n,\ldots$.

We conjecture that the $n$-cube has $\lfloor n/2\rfloor+1$ pairwise edge-disjoint SCDs, but so far we only know that this holds for~$n\leq 7$.
Clearly, finding this many edge-disjoint SCDs would be best possible, as they use up all middle edges of the cube.
Maximum sets of pairwise edge-disjoint SCDs in the $n$-cube we found for $n=1,2,\ldots,11$ are shown in Table~\ref{tab:small-scds}, together with the aforementioned upper bound.

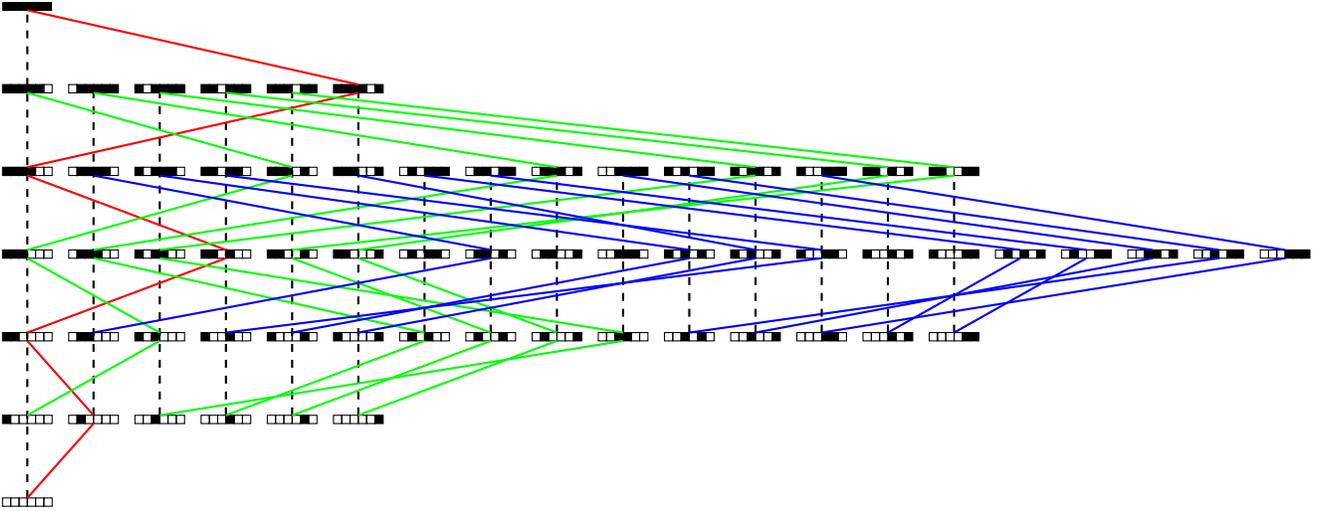
\begin{figure}
\makebox[0cm]{ % artificial box to center the picture
\begin{tikzpicture}[scale=1.1]
% draw vertices
\foreach \i in {0,...,5} {
\draw ({\i/10},0) rectangle ({(\i+1)/10},0.1);
}
\foreach \i in {0,...,5} {
\foreach \j in {0,...,5} {
\draw ({\i*0.8+\j/10},1) rectangle ({\i*0.8+(\j+1)/10},1.1);
}
\fill ({\i*9/10},1) rectangle ({(\i*9+1)/10},1.1);
}
\foreach \i in {0,...,14} {
\foreach \j in {0,...,5} {
\draw ({(\i*8+\j)/10},2) rectangle ({(\i*8+\j+1)/10},2.1);
}
}
\fill (0,2) rectangle (0.2,2.1);
\fill (0.9,2) rectangle (1.1,2.1);
\foreach \i in {2,...,5} {
\fill (\i*0.8,2) rectangle ({\i*0.8+0.1},2.1);
\fill ({\i*9/10},2) rectangle ({\i*9/10+0.1},2.1);
}
\foreach \i in {3,4,5} {
\fill ({2.4+\i*0.8+1/10},2) rectangle ({2.4+\i*0.8+2/10},2.1);
\fill ({2.4+\i*9/10},2) rectangle ({2.4+(\i*9+1)/10},2.1);
}
\foreach \i in {2,3,4} {
\pgfmathsetmacro{\start}{\i+1}
\foreach \j in {\start,...,5} {
\fill ({-\i*\i*0.4+3.7*\i+0.8*\j-0.8},2) rectangle ({-\i*\i*0.4+3.7*\i+0.8*\j-0.7},2.1);
\fill ({-\i*\i*0.4+3.6*\i+0.9*\j-0.8},2) rectangle ({-\i*\i*0.4+3.6*\i+0.9*\j-0.7},2.1);
}
}
\foreach \i in {0,...,19} {
\foreach \j in {0,...,5} {
\draw ({\i*0.8+\j/10},3) rectangle ({\i*0.8+(\j+1)/10},3.1);
}
}
\fill (0,3) rectangle (0.3,3.1);
\fill (0.9,3) rectangle (1.2,3.1);
\fill (2*0.8,3) rectangle (1.7,3.1);
\fill (1.8,3) rectangle (2,3.1);
\foreach \i in {3,4,5} {
\fill ({\i*0.8},3) rectangle ({\i*0.8+2/10},3.1);
\fill ({\i*9/10},3) rectangle ({(9*\i+1)/10},3.1);
}
\fill (4.9,3) rectangle (5,3.1);
\fill (5.1,3) rectangle (5.3,3.1);
\fill (5.7,3) rectangle (5.9,3.1);
\fill (6,3) rectangle (6.1,3.1);
\fill (6.5,3) rectangle (6.7,3.1);
\fill (6.9,3) rectangle (7,3.1);
\fill (7.4,3) rectangle (7.7,3.1);
\foreach \i in {2,3} {
\foreach \j in {4,5} {
\fill ({0.8*(2*\i+\j+2)},3) rectangle ({0.8*(2*\i+\j+2)+0.1},3.1);
\fill ({1.7*\i+0.8*\j+1.6},3) rectangle ({1.7*(\i+1)+0.8*\j},3.1);
\fill ({1.6*\i+0.9*\j+1.6},3) rectangle ({1.6*\i+0.9*\j+1.7},3.1);
}
}
\fill (11.2,3) rectangle (11.3,3.1);
\fill (11.6,3) rectangle (11.8,3.1);
\fill (12.1,3) rectangle (12.2,3.1);
\fill (12.3,3) rectangle (12.4,3.1);
\fill (12.5,3) rectangle (12.6,3.1);
\fill (12.9,3) rectangle (13,3.1);
\fill (13.2,3) rectangle (13.4,3.1);
\fill (13.8,3) rectangle (14,3.1);
\fill (14.1,3) rectangle (14.2,3.1);
\fill (14.6,3) rectangle (14.7,3.1);
\fill (14.8,3) rectangle (15,3.1);
\fill (15.5,3) rectangle (15.8,3.1);
\foreach \i in {0,...,14} {
\foreach \j in {0,...,5} {
\draw ({(8*\i+\j)/10},4) rectangle ({(8*\i+\j+1)/10},4.1);
}
}
\fill (0,4) rectangle (0.4,4.1);
\fill (0.9,4) rectangle (1.3,4.1);
\foreach \i in {1,2,3} {
\fill ({0.8*(\i+1)},4) rectangle ({(8+9*\i)/10},4.1);
\fill ({(1+\i)*0.9},4) rectangle ({0.8*\i+1.3},4.1);
}
\fill (4,4) rectangle (4.3,4.1);
\fill (4.5,4) rectangle (4.6,4.1);
\foreach \i in {2,3,4} {
\fill (3.3+\i*0.8,4) rectangle (3.2+\i*0.9,4.1);
\fill (3.3+0.9*\i,4) rectangle (3.8+0.8*\i,4.1);
}
\fill (7.4,4) rectangle (7.8,4.1);
\fill (8,4) rectangle (8.1,4.1);
\fill (8.2,4) rectangle (8.3,4.1);
\fill (8.4,4) rectangle (8.6,4.1);
\fill (8.8,4) rectangle (8.9,4.1);
\fill (9,4) rectangle (9.2,4.1);
\fill (9.3,4) rectangle (9.4,4.1);
\fill (9.6,4) rectangle (9.7,4.1);
\fill (9.9,4) rectangle (10.2,4.1);
\fill (10.4,4) rectangle (10.6,4.1);
\fill (10.7,4) rectangle (10.8,4.1);
\fill (10.9,4) rectangle (11,4.1);
\fill (11.2,4) rectangle (11.4,4.1);
\fill (11.6,4) rectangle (11.8,4.1);
\foreach \i in {0,...,5} {
\foreach \j in {0,...,5} {
\draw ({(8*\i+\j)/10},5) rectangle ({(8*\i+\j+1)/10},5.1);
}
}
\fill (0,5) rectangle (0.5,5.1);
\foreach \i in {1,...,5} {
\fill ({0.9*\i},5) rectangle ({(8*\i+6)/10},5.1);
}
\foreach \i in {2,...,5} {
\fill ({0.8*\i},5) rectangle ({(9*\i-1)/10},5.1);
}
\fill (0,6) rectangle (0.6,6.1);

% draw chains
% chains from D_0
\draw[dashed,thick] (0.3,0.1) -- +(0,5.9);
\foreach \i in {1,...,5} {
\draw[dashed,thick] (0.8*\i+0.3,1.1) -- +(0,3.9);
}
\foreach \i in {6,...,14} {
\draw[dashed,thick] (0.8*\i+0.3,2.1) -- +(0,1.9);
}
% chains from D_1
\draw[thick,red] (0.3,0.1) -- (1.1,1) (1.1,1.1) -- (0.3,2) (0.3,2.1) -- (2.7,3) (2.7,3.1) -- (0.3,4) (0.3,4.1) -- (4.3,5) (4.3,5.1) -- (0.3,6);
\draw[thick,green] (0.3,1.1) -- (1.9,2) (1.9,2.1) -- (0.3,3) (0.3,3.1) -- (3.5,4) (3.5,4.1) -- (0.3,5);
\draw[thick,green] (1.9,1.1) -- (7.5,2) (7.5,2.1) -- (1.9,3) (1.9,3.1) -- (9.1,4) (9.1,4.1) -- (1.9,5);
\draw[thick,green] (2.7,1.1) -- (5.1,2) (5.1,2.1) -- (1.1,3) (1.1,3.1) -- (6.7,4) (6.7,4.1) -- (1.1,5);
\draw[thick,green] (3.5,1.1) -- (5.9,2) (5.9,2.1) -- (3.5,3) (3.5,3.1) -- (11.5,4) (11.5,4.1) -- (3.5,5);
\draw[thick,green] (4.3,1.1) -- (6.7,2) (6.7,2.1) -- (4.3,3) (4.3,3.1) -- (10.7,4) (10.7,4.1) -- (2.7,5);
\draw[thick,blue] (2.7,2.1) -- (9.9,3) (9.9,3.1) -- (2.7,4);
\draw[thick,blue] (3.5,2.1) -- (8.3,3) (8.3,3.1) -- (1.9,4);
\draw[thick,blue] (4.3,2.1) -- (9.1,3) (9.1,3.1) -- (4.3,4);
\draw[thick,blue] (1.1,2.1) -- (5.9,3) (5.9,3.1) -- (1.1,4);
\draw[thick,blue] (8.3,2.1) -- (14.7,3) (14.7,3.1) -- (8.3,4);
\draw[thick,blue] (9.1,2.1) -- (13.9,3) (13.9,3.1) -- (7.5,4);
\draw[thick,blue] (9.9,2.1) -- (15.5,3) (15.5,3.1) -- (9.9,4);
\draw[thick,blue] (10.7,2.1) -- (12.3,3) (12.3,3.1) -- (5.1,4);
\draw[thick,blue] (11.5,2.1) -- (13.1,3) (13.1,3.1) -- (5.9,4);
\end{tikzpicture}
}
\caption{The edge-disjoint SCDs~$\cD_0$ (dashed vertical paths) and~$\cD_1$ (solid paths; chains of the same length are drawn with the same color) in the 6-cube.
The bitstrings are drawn with white squares representing 0s and black squares representing~1s.
}
\label{fig:d01}
\end{figure}

\begin{table}
\caption{Known pairwise edge-disjoint SCDs in the $n$-cube for $n=1,2,\ldots,11$.
The definitions of~$\cX_5,\cY_5,\cZ_5$ and~$\cX_7,\cY_7$ are given in Section~\ref{sec:4scds-odd}, and the product operation $\times$ is described in Section~\ref{sec:prod-proof}.
}
\makebox[0cm]{ % artificial box to center the picture
\begin{tabular}{c|lllllllllll}
$n$                    & 1 & 2 & 3 & 4 & 5 & 6 & 7 & 8 & 9 & 10 & 11 \\ \hline
$\lfloor n/2\rfloor+1$ & 1 & 2 & 2 & 3 & 3 & 4 & 4 & 5 & 5 & 6 & 6 \\
SCDs                   & $\cD_0$
                       & $\cD_0,\ol{\cD_0}$
                       & $\cD_0,\ol{\cD_0}$
                       & $\cD_0,\ol{\cD_0},$
                       & $\cX_5,\cY_5,$
                       & $\cD_0,\ol{\cD_0},$
                       & $\cX_7,\ol{\cX_7},$
                       & $\cD_0,\ol{\cD_0},$
                       & $\cD_0(4)\times \cX_5,$
                       & $\cD_0,\ol{\cD_0},$ 
                       & $\cD_0(6)\times \cX_5,$ \\
                       & & & & $\cD_1$ & $\cZ_5$ & $\cD_1,\ol{\cD_1}$ & $\cY_7,\ol{\cY_7}$ & $\cD_1,\ol{\cD_1}$ & $\ol{\cD_0(4)}\times \cY_5,$ & $\cD_1,\ol{\cD_1}$ & $\ol{\cD_0(6)}\times \cY_5,$ \\
                       & & & & & & & & & $\cD_1(4)\times \cZ_5$ & & $\cD_1(6)\times \cZ_5$
\end{tabular}
}
\label{tab:small-scds}
\end{table}

\subsection{Related work}

Apart from building Gray codes, symmetric chain decompositions have many other interesting applications, e.g., to construct rotation-symmetric Venn diagrams for $n$~sets when $n$ is a prime number~\cite{MR2034416, MR2268388}, and to solve the Littlewood-Offord problem on sums of vectors~\cite{MR866142}.
It would be very interesting to investigate how the new SCDs of the $n$-cube presented in this paper can be exploited for those and other applications.

A notion that is closely related to edge-disjoint SCDs is that of \emph{orthogonal} chain decompositions, which were first considered by Shearer and Kleitman~\cite{MR532807}.
Two chain decompositions are called \emph{orthogonal} if every pair of chains has at most one vertex in common, where one also allows chains that are not symmetric around the middle or chains that skip some levels.
Shearer and Kleitman showed that~$\cD_0$ and~$\ol{\cD_0}$ are almost orthogonal (only the longest chains have two elements in common), and they conjectured that the $n$-cube has $\lfloor n/2\rfloor+1$ pairwise orthogonal chain decompositions where each decomposition consists of $\binom{n}{\lfloor n/2\rfloor}$ many chains.
Spink~\cite{spink:17} and Däubel et al.~\cite{MR4017317} recently made some progress towards this conjecture, establishing~4 as the best known lower bound.

Pikhurko~\cite{MR1765729} showed via a parenthesis matching argument that all edges of the $n$-cube can be decomposed into symmetric chains.
However, it is not clear whether these chains contain a subset that forms an SCD.
An interesting construction relating Hamilton cycles and SCDs in the $n$-cube was presented by Streib and Trotter~\cite{MR3268651}.
They inductively construct a Hamilton cycle in the $n$-cube for any~$n\geq 2$ that can be partitioned into symmetric chains forming an SCD.
This Hamilton cycle has the minimal number of `peaks' where the differences in the Hamming weight change sign (and thus also the minimal number of corresponding `valleys').

\subsection{Proof ideas and outlook}
\label{sec:outlook}

Our proofs of Theorem~\ref{thm:gmlc4} and Theorem~\ref{thm:4scds-even} are based on the so-called $i$-lexical matchings introduced by Kierstead and Trotter~\cite{MR962224}, which form a factorization of all edges between the middle two levels of the $(2n+1)$-cube.
We generalize these matchings to arbitrary consecutive levels of the cube of arbitrary dimension, and combine suitable subsets of those matching edges into a cycle factor and symmetric chain decompositions, respectively.
In particular, the four SCDs in Theorem~\ref{thm:4scds-even}, referred to as $\cD_0$, $\cD_1$, $\ol{\cD_0}$ and $\ol{\cD_1}$ before the theorem, are the union of all 0-lexical and 1-lexical matching edges, and of their complements, respectively.
For the proof of Theorem~\ref{thm:gmlc4}, the cycles of the factor obtained as the union of certain lexical matching edges are joined to a Hamilton cycle by local modifications.
As mentioned before, Theorem~\ref{thm:4scds-odd} is derived from Theorem~\ref{thm:prod}, which is proved by a straightforward adaptation of the arguments from~\cite{MR0043115}.

Subsequent to our work, Gregor, Mi\v{c}ka, and M\"utze~\cite{DBLP:conf/icalp/GregorMM20} solved Problem~\ref{prob:gmlc} in full generality.
Their proof generalizes our construction for the case $\ell=2$, and also uses lexical matchings extensively.
Furthermore, D\"aubel, J\"ager, M\"utze and Scheucher~\cite{MR4017317} showed that there are four edge-disjoint SCDs in the $n$-cube for all $n\geq 6$, ruling out the two possible exceptions~$n=9,11$.
Moreover, they proved that there are five edge-disjoint SCDs for all $n\geq 90$, and six edge-disjoint SCDs for all dimensions~$n=11k$, $k\geq 1$.

\subsection{Outline of this paper}

In Section~\ref{sec:defs} we introduce several definitions that will be used throughout this paper.
In Section~\ref{sec:scds} we present the proofs of Theorems~\ref{thm:4scds-even}--\ref{thm:prod}, and we describe the construction of the SCD~$\cD_1$ and of the SCDs in~$Q_5$ and~$Q_7$ referred to in Table~\ref{tab:small-scds}.
As it is somewhat technical, we defer the proof of Theorem~\ref{thm:gmlc4} to Section~\ref{sec:gmlc4}.
In Section~\ref{sec:exp} we present results from computer experiments on the cycle factors through the middle $2\ell$~levels of the $(2n+1)$-cube constructed as in the proof of Theorem~\ref{thm:gmlc-2fac}.
We conclude in Section~\ref{sec:open} with some open problems.

\section{Preliminaries}
\label{sec:defs}

We begin by introducing some terminology that is used throughout the following sections.

\subsection{Bitstrings, lattice paths, and rooted trees}
\label{sec:dyck}

We use~$L_{n,k}$ to denote the set of all bitstrings of length~$n$ with Hamming weight~$k$, so this is exactly the $k$th level of~$Q_n$.
For any bitstring~$x$, we write~$\ol{x}$ for its complement and~$\rev(x)$ for the reversed bitstring.

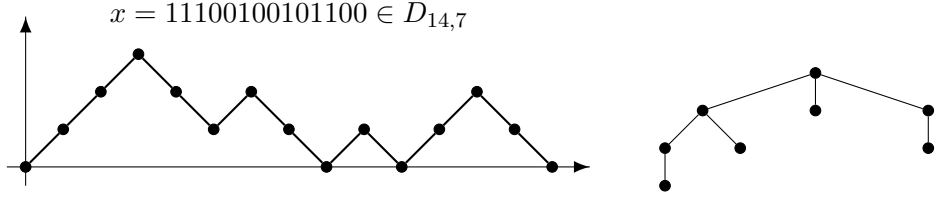
\begin{figure}
\begin{tikzpicture}[scale=0.5]

\draw[ptr] (-0.5,0) -- (15,0);
\draw[ptr] (0,-0.5) -- (0,4);

\node[node_black] (0) at (0,0) {};
\node[node_black] (1) at (1,1) {};
\node[node_black] (2) at (2,2) {};
\node[node_black] (3) at (3,3) {};
\node[node_black] (4) at (4,2) {};
\node[node_black] (5) at (5,1) {};
\node[node_black] (6) at (6,2) {};
\node[node_black] (7) at (7,1) {};
\node[node_black] (8) at (8,0) {};
\node[node_black] (9) at (9,1) {};
\node[node_black] (10) at (10,0) {};
\node[node_black] (11) at (11,1) {};
\node[node_black] (12) at (12,2) {};
\node[node_black] (13) at (13,1) {};
\node[node_black] (14) at (14,0) {};

\path[line_solid] (0) -- (1) -- (2) -- (3) -- (4) -- (5) -- (6) -- (7) -- (8) -- (9) -- (10) -- (11) -- (12) -- (13) -- (14);

\node[node_black] (Z) at (17,-0.5) {};
\node[node_black] (A) at (17,0.5) {} edge (Z);
\node[node_black] (B) at (19,0.5) {};
\node[node_black] (C) at (18,1.5) {} edge (A) edge (B);
\node[node_black] (D) at (21,1.5) {};
\node[node_black] (E) at (24,0.5) {};
\node[node_black] (F) at (24,1.5) {} edge (E);
\node[node_black] at (21,2.5) {} edge (C) edge (D) edge (F);

\node at (7,4) {$x=11100100101100\in D_{14,7}$};

\end{tikzpicture}
\caption{The correspondence between bitstrings, lattice paths (left) and rooted trees (right).}
\label{fig:tree}
\end{figure}

We often interpret a bitstring~$x$ as a path in the integer lattice~$\mathbb{Z}^2$ starting at the origin~$(0,0)$, where every 1-bit is interpreted as an $\upstep$-step that changes the current coordinate by~$(+1,+1)$ and every 0-bit is interpreted as a $\downstep$-step that changes the current coordinate by~$(+1,-1)$; see Figure~\ref{fig:tree}.
Let $D_{n,k}\seq L_{n,k}$ denote the bitstrings that have the property that in every prefix, the number of~1s is at least as large as the number of 0s.
We partition the set~$D_{n,k}$ further into~$D_{n,k}^{>0}$ and~$D_{n,k}^{=0}$, according to whether this inequality is strict for all non-empty prefixes, or whether it holds with equality for at least one non-empty prefix, respectively.
The empty bitstring~$()$ therefore belongs to~$D_{0,0}^{>0}$ and not to~$D_{0,0}^{=0}$.
We also define $D^{>0}:=\bigcup_{n\geq k\geq 0} D_{n,k}^{>0}$, $D^{=0}:=\bigcup_{n\geq k\geq 0} D_{n,k}^{=0}$ and $D:=\bigcup_{k\geq 0} D_{2k,k}$.
In terms of lattice paths, $D$ corresponds to so-called \emph{Dyck paths} that never move below the abscissa~$y=0$ and end at the abscissa.
Similarly, $D^{>0}$ are paths that always stay strictly above the abscissa except at the origin, and~$D^{=0}$ are paths that touch the abscissa at least once more.
Any bitstring~$x\in D^{=0}$ can be written uniquely as $x=(1,u,0,v)$ with~$u\in D$.
We refer to this as the \emph{canonical decomposition} of~$x$.
The set~$D_{n,k}^-$ is defined similarly as~$D_{n,k}$, but we require that in exactly one prefix, the number of~1s is strictly smaller than the number of~0s.
That is, the lattice paths corresponding to~$D_{n,k}^-$ move below the abscissa exactly once.

Note that for any bitstring~$x$, the inverted bitstring~$\ol{x}$ corresponds to mirroring the lattice path at a horizontal line, and the inverted and reversed bitstring~$\ol{\rev}(x)$ corresponds to mirroring the lattice path at a vertical line.
In particular, we have~$\ol{\rev}(x)\in D$ for every~$x\in D$.

An \emph{(ordered) rooted tree} is a tree with a specified root vertex, and the children of each vertex have a specified left-to-right ordering.
We think of a rooted tree as a tree embedded in the plane with the root on top, with downward edges leading from any vertex to its children, and the children appear in the specified left-to-right ordering. 
Using a standard Catalan bijection, every Dyck path~$x\in D_{2n,n}$ can be interpreted as a rooted tree with $n$~edges; see Figure~\ref{fig:tree} and~\cite{MR3467982}.

\subsection{Lexical matchings}
\label{sec:lex}

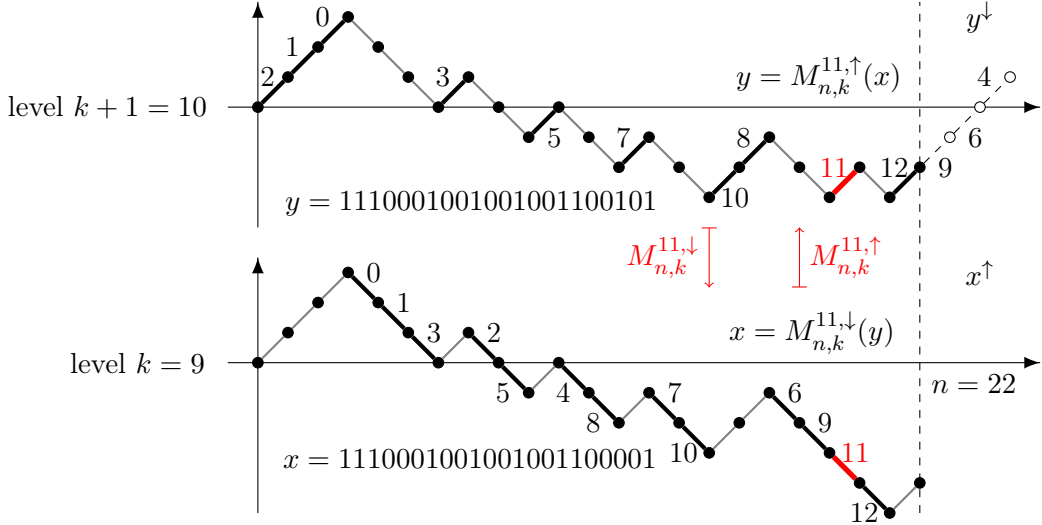
\begin{figure}
% set distances for step labels
\tikzset{dlabela/.style={xshift=1.4mm,yshift=2mm}}
\tikzset{dlabelb/.style={xshift=-1.4mm,yshift=-2mm}}
\tikzset{ulabela/.style={xshift=-1.4mm,yshift=2mm}}
\tikzset{ulabelb/.style={xshift=1.4mm,yshift=-2mm}}

\begin{tikzpicture}[scale=0.4]

\draw[ptr] (-1,0) -- (26,0);
\draw[ptr] (0,-5) -- (0,3.5);

\node[node_black] (0) at (0,0) {};
\node[node_black] (1) at (1,1) {};
\node[node_black] (2) at (2,2) {};
\node[node_black] (3) at (3,3) {};
\node[node_black] (4) at (4,2) {};
\node[node_black] (5) at (5,1) {};
\node[node_black] (6) at (6,0) {};
\node[node_black] (7) at (7,1) {};
\node[node_black] (8) at (8,0) {};
\node[node_black] (9) at (9,-1) {};
\node[node_black] (10) at (10,0) {};
\node[node_black] (11) at (11,-1) {};
\node[node_black] (12) at (12,-2) {};
\node[node_black] (13) at (13,-1) {};
\node[node_black] (14) at (14,-2) {};
\node[node_black] (15) at (15,-3) {};
\node[node_black] (16) at (16,-2) {};
\node[node_black] (17) at (17,-1) {};
\node[node_black] (18) at (18,-2) {};
\node[node_black] (19) at (19,-3) {};
\node[node_black] (20) at (20,-4) {};
\node[node_black] (21) at (21,-5) {};
\node[node_black] (22) at (22,-4) {};

\path[line_solid,gray] (0) -- (1) -- (2) -- (3) (6) -- (7) (9) -- (10) (12) -- (13) (15) -- (16) -- (17) (21) -- (22);
\draw[ultra thick] (3) -- node[dlabela] {0} (4) -- node[dlabela] {1} (5) -- node[dlabela] {3} (6) (7) -- node[dlabela] {2} (8) -- node[dlabelb] {5} (9) (10) -- node[dlabelb] {4} (11) -- node[dlabelb] {8} (12) (13) -- node[dlabela] {7} (14) -- node[dlabelb] {10} (15) (17) -- node[dlabela] {6} (18) -- node[dlabela] {9} (19) (20) -- node[dlabelb] {12} (21);
\draw[line width=2pt, red] (19) -- node[dlabela] {11} (20);

\node[anchor=west] at (0.5,-3.2) {$x=1110001001001001100001$};
\node[anchor=east] at (21.5,1) {$x=M_{n,k}^{11,\downarrow}(y)$};
\node at (24,3) {$x^\uparrow$};
\node at (-4,0) {level $k=9$};
\node at (23.8,-0.7) {$n=22$};

\draw[red,|-to] (18,2.5) -- node[right] {$M_{n,k}^{11,\uparrow}$} (18,4.5);
\draw[red,|-to] (15,4.5) -- node[left] {$M_{n,k}^{11,\downarrow}$} (15,2.5);

\draw[dashed] (22,-5) -- (22,12);

\begin{scope}[yshift=8.5cm]
\draw[ptr] (-1,0) -- (26,0);
\draw[ptr] (0,-4) -- (0,3.5);

\node[node_black] (0) at (0,0) {};
\node[node_black] (1) at (1,1) {};
\node[node_black] (2) at (2,2) {};
\node[node_black] (3) at (3,3) {};
\node[node_black] (4) at (4,2) {};
\node[node_black] (5) at (5,1) {};
\node[node_black] (6) at (6,0) {};
\node[node_black] (7) at (7,1) {};
\node[node_black] (8) at (8,0) {};
\node[node_black] (9) at (9,-1) {};
\node[node_black] (10) at (10,0) {};
\node[node_black] (11) at (11,-1) {};
\node[node_black] (12) at (12,-2) {};
\node[node_black] (13) at (13,-1) {};
\node[node_black] (14) at (14,-2) {};
\node[node_black] (15) at (15,-3) {};
\node[node_black] (16) at (16,-2) {};
\node[node_black] (17) at (17,-1) {};
\node[node_black] (18) at (18,-2) {};
\node[node_black] (19) at (19,-3) {};
\node[node_black] (20) at (20,-2) {};
\node[node_black] (21) at (21,-3) {};
\node[node_black] (22) at (22,-2) {};
\node[node_white] (23) at (23,-1) {};
\node[node_white] (24) at (24,0) {};
\node[node_white] (25) at (25,1) {};

\tikzset{every node/.style={inner sep=1pt}}

\path[line_solid,gray] (3) -- (4) -- (5) -- (6) (7) -- (8) -- (9) (10) -- (11) -- (12) (13) -- (14) -- (15) (17) -- (18) -- (19) (20) -- (21);
\draw[ultra thick] (0) -- node[ulabela,xshift=0.7mm] {2} (1) -- node[ulabela] {1} (2) -- node[ulabela] {0} (3) (6) -- node[ulabela] {3} (7) (9) -- node[ulabelb] {5} (10) (12) -- node[ulabela] {7} (13) (15) -- node[ulabelb] {10} (16) -- node[ulabela] {8} (17) (21) -- node[ulabela] {12} (22);
\draw[line width=2pt, red] (19) -- node[ulabela] {11} (20);
\draw[dashed] (22) -- node[ulabelb] {9} (23) -- node[ulabelb] {6} (24) -- node[ulabela] {4} (25);

\node[anchor=west] at (0.8,-3.2) {$y=1110001001001001100101$};
\node[anchor=east] at (21.5,1) {$y=M_{n,k}^{11,\uparrow}(x)$};
\node at (24,3) {$y^{\downarrow}$};
\node at (-5,0) {level $k+1=10$};
\end{scope}

\end{tikzpicture}
\caption{Definition of $i$-lexical matchings between levels~9 and~10 of~$Q_{22}$, where steps flipped along the $i$-lexical matching edge are marked with~$i$.
Between those two levels, the vertex~$x$ is incident with $i$-lexical matching edges for each $i\in\{0,1,\ldots,12\}$, and the vertex~$y$ is incident with $i$-lexical matching edges for each $i\in\{0,1,\ldots,12\}\setminus\{4,6,9\}$.
}
\label{fig:lexmatch}
\end{figure}

We now define the aforementioned \emph{$i$-lexical matchings} of Kierstead and Trotter~\cite{MR962224}.
Originally, they were defined and analyzed for the graph between the middle two levels of the $(2n+1)$-cube in an attempt to tackle the middle two levels problem, and we begin by generalizing them to the $n$-cube for arbitrary $n$ and an arbitrary pair of consecutive levels~$k$ and~$k+1$.
The parameter~$i$ is an integer~$i\in\{0,1,\ldots,n-1\}$, and these matchings are defined as follows; see Figure~\ref{fig:lexmatch}.
Again we interpret a bitstring~$x$ as a lattice path, and we let~$x^\uparrow$ denote the lattice path that is obtained by appending $\downstep$-steps to~$x$ until the resulting path ends at height~$-1$.
If $x$ ends at a height less than~$-1$, then~$x^\uparrow:=x$.
Similarly, we let~$x^\downarrow$ denote the lattice path obtained by appending $\upstep$-steps to~$x$ until the resulting path ends at height~$+1$.
If $x$ ends at a height greater than~$+1$, then~$x^\downarrow:=x$.
We define the matching by two partial mappings $M_{n,k}^{i,\uparrow}\colon L_{n,k}\rightarrow L_{n,k+1}$ and $M_{n,k}^{i,\downarrow} \colon L_{n,k+1}\rightarrow L_{n,k}$ defined as follows:
For any~$x\in L_{n,k}$ we consider the lattice path~$x^\uparrow$ and scan it row-wise from top to bottom, and from right to left in each row.
The partial mapping~$M_{n,k}^{i,\uparrow}(x)$ is obtained by flipping the $i$th $\downstep$-step encountered in this fashion, where counting starts with $0,1,\ldots$, if this $\downstep$-step exists and is part of~$x$; otherwise $x$ is left unmatched.
Similarly, for any~$x\in L_{n,k+1}$ we consider the lattice path~$x^\downarrow$ and scan it row-wise from top to bottom, and from left to right in each row.
The partial mapping~$M_{n,k}^{i,\downarrow}(x)$ is obtained by flipping the~$i$th~$\upstep$-step encountered in this fashion if this~$\upstep$-step exists and is part of~$x$; otherwise $x$ is left unmatched.
It is straightforward to verify that these two partial mappings are inverse to each other, so they indeed define a matching between levels~$k$ and~$k+1$ of~$Q_n$, which we denote by~$M_{n,k}^i$.

The following properties of lexical matchings are straightforward consequences of these definitions.

\begin{lemma}
\label{lem:lex}
Let $0\leq k\leq n-1$ and $l:=\max\{k,n-k-1\}$. The lexical matchings defined before have the following properties. 
\begin{enumerate}[label=(\roman*)]
\item For every $0\leq i \leq l$, the matching~$M_{n,k}^i$ saturates all vertices in the smaller of the two levels~$k$ and~$k+1$.
\item The matchings~$M_{n,k}^i$, $i=0,1,\ldots,l$, form a partition of all edges of the subgraph of~$Q_n$ between levels~$k$ and~$k+1$.
\item For every $0\leq i \leq l$ we have $\ol{M_{n,k}^i}=M_{n,n-k-1}^{l-i}$ and $\rev(M_{n,k}^i)=M_{n,k}^{l-i}$.
Consequently, we have $\ol{\rev}(M_{n,k}^i)=M_{n,n-k-1}^i$.
\end{enumerate}
\end{lemma}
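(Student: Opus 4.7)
The plan is to proceed by a direct analysis of the scanning rules defining the partial maps $M_{n,k}^{i,\uparrow}$ and $M_{n,k}^{i,\downarrow}$. As a preliminary step, I would compute the total number of down-steps in the extended path $x^\uparrow$ for every $x\in L_{n,k}$ and, dually, the total number of up-steps in $y^\downarrow$ for every $y\in L_{n,k+1}$. A short case analysis (depending on whether $x$ already ends strictly below height~$-1$, respectively whether $y$ already ends strictly above height~$+1$) should show that in both cases this count equals $\max\{n-k,k+1\}=l+1$, so that the rank~$i$ is well defined for each $i\in\{0,1,\ldots,l\}$.

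For part~(i), I would observe that when $L_{n,k}$ is the smaller of the two levels, i.e.\ $k\leq(n-1)/2$, the path $x$ already ends at height~$\leq -1$, so $x^\uparrow=x$ and all $l+1$ down-steps of $x^\uparrow$ actually lie in $x$. Hence $M_{n,k}^{i,\uparrow}$ is defined on all of $L_{n,k}$ for every $i\in\{0,\ldots,l\}$, and the symmetric case in which $L_{n,k+1}$ is smaller is handled using $M_{n,k}^{i,\downarrow}$. For part~(ii), pairwise edge-disjointness follows because mapping the same $x\in L_{n,k}$ under two different indices $i\neq j$ would require flipping two different down-steps of $x^\uparrow$, producing two different targets. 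Combined with~(i), each of the $l+1$ matchings contributes exactly $\min\{\binom{n}{k},\binom{n}{k+1}\}$ edges, so the total is $(l+1)\min\{\binom{n}{k},\binom{n}{k+1}\}=(n-k)\binom{n}{k}=(k+1)\binom{n}{k+1}$, which is exactly the number of edges between levels~$k$ and~$k+1$.

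Part~(iii) is the step I expect to require the most care, and my plan is to track how both ingredients of the definition---the shape of the extended path and the prescribed scanning order---transform under $\ol{\cdot}$ and $\rev$. Complementation mirrors the lattice path vertically, swapping up- and down-steps and turning $x^\uparrow$ into $\ol{x}^\downarrow$; the scan \emph{top-to-bottom, right-to-left} then becomes \emph{bottom-to-top, right-to-left}, which is exactly the reverse of the prescribed \emph{top-to-bottom, left-to-right} scan used to define $M_{n,n-k-1}^{\cdot,\downarrow}$. Since there are $l+1$ relevant steps in total, the rank changes from $i$ to $l-i$, yielding $\ol{M_{n,k}^i}=M_{n,n-k-1}^{l-i}$. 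For $\rev$ the plan is analogous, but the geometry is subtler: reversing $x$ realises as a $180^\circ$ rotation of the lattice path about its midpoint rather than as a reflection, so I would need to verify that this rotation maps down-steps to down-steps and interchanges the two scanning orders so that the rank swap $i\mapsto l-i$ again emerges. The third identity follows by composing the previous two. The main obstacle I foresee is formalising this last symmetry carefully, in particular making sure that the appended down-steps at the tail of $x^\uparrow$ correspond correctly to those at the tail of $\rev(x)^\uparrow$.
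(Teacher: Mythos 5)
Your proposal matches the paper's (very terse) proof: it likewise derives (i) from the fact that no steps get appended on the smaller side, (ii) from a degree count combined with pairwise disjointness, and (iii) from the geometric effect of complementation (reflection across the abscissa) and reversal (a $180^{\circ}$ rotation) on the lattice paths. The appended-step subtlety you flag for $\rev$ dissolves once you argue from the smaller of the two levels, where $x^\uparrow=x$ (respectively $y^\downarrow=y$) and nothing is appended, so the rotation sends down-steps to down-steps (up-steps to up-steps) and simply reverses the prescribed scanning order, giving the rank swap $i\mapsto l-i$.
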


Property~(i) holds as in the smaller of the two levels~$k$ and~$k+1$, no steps are appended to the lattice paths and the required steps exist when computing the $i$-lexical matching between those levels.
Property~(ii) holds as the vertices in the smaller of the two levels~$k$ and~$k+1$ have degree~$l+1$ and the matchings~$M_{n,k}^i$ for $i=0,1,\ldots,l$ are pairwise disjoint.
Property~(iii) follows from the observation that complementing a bitstring corresponds to mirroring the lattice path at a horizontal line, and reverting a bitstring corresponds to mirroring the lattice path at a horizontal line and at a vertical line.

\section{Pairwise edge-disjoint SCDs}
\label{sec:scds}

We proceed to prove Theorems~\ref{thm:4scds-even}--\ref{thm:prod}.

\subsection{Proof of Theorem~\ref{thm:4scds-even}}

To prove Theorem~\ref{thm:4scds-even}, we first give an equivalent definition of the SCD~$\cD_0$ defined in the introduction via the parenthesis matching approach; recall Figure~\ref{fig:q5}~(a) and Figure~\ref{fig:paren}.

For even~$n\geq 2$, we consider a vertex~$x\in L_{n,n/2}$ in the middle level~$n/2$ of~$Q_n$, and we define the sequence of vertices reached from~$x$ when moving up the corresponding chain, and the sequence of vertices reached when moving down the chain.
For this we consider the lattice path corresponding to the bitstring~$x$.
This lattice path ends at the coordinate~$(n,0)$ as the number of~0s equals the number of~1s.
We now label a subsequence of $\downstep$-steps of this lattice path with integers $j=1,2,\ldots$ according to the following procedure; see the top part of Figure~\ref{fig:d01flip} for an illustration:
\begin{enumerate}[label=(\alph*0)]
\item We place a marker at the rightmost highest point of~$x$ and set~$j:=1$.
\item If the marker is at height~$h\geq 1$, we label the $\downstep$-step starting at the marker with~$j$, and we move the marker to the starting point of the rightmost $\downstep$-step starting at height~$h-1$.
We set $j:=j+1$ and repeat.
\item If the marker is at height~$h=0$, we stop.
\end{enumerate}
Flipping the $\downstep$-steps of~$x$ marked with $1,2,\ldots$ in this order yields the sequence of vertices reached from~$x$ when moving up the chain containing~$x$.
An analogous labeling procedure obtained by interchanging left and right, $\downstep$-steps and $\upstep$-steps, and starting with ending points yields the sequence of vertices reached from~$x$ when moving down this chain.
We denote this chain by~$C_0(x)$.
Observe that $C_0(x)$ is a symmetric chain, as the height of the marker decreases by~1 in each step, so the number of edges we move up from~$x$ equals the number of edges we move down from~$x$.
It is easy to verify that the SCD~$\cD_0$ defined before via the parenthesis matching approach satisfies
\begin{equation*}
  \cD_0=\bigcup\nolimits_{x\in L_{n,n/2}} C_0(x) \enspace.
\end{equation*}

\begin{figure}
\begin{tikzpicture}[scale=0.4,>=latex]

\draw[ptr] (-1,0) -- (24,0);
\draw[ptr] (0,-1) -- (0,7);
\draw (22,-0.2) -- (22,0.2);

\node[marker_blue] (0) at (0,0) {};
\node[marker_blue] (1) at (1,1) {};
\node[marker_blue] (2) at (2,2) {};
\node[marker_blue] (3) at (3,3) {};
\node[marker_blue] (4) at (4,4) {};
\node[marker_blue] (5) at (5,5) {};
\node[node_black] (6) at (6,4) {};
\node[marker_red] (7) at (7,5) {};
\node[node_black] (8) at (8,4) {};
\node[node_black] (9) at (9,3) {};
\node[marker_red] (10) at (10,4) {};
\node[node_black] (11) at (11,3) {};
\node[node_black] (12) at (12,2) {};
\node[node_black] (13) at (13,3) {};
\node[node_black] (14) at (14,2) {};
\node[node_black] (15) at (15,1) {};
\node[node_black] (16) at (16,2) {};
\node[marker_red] (17) at (17,3) {};
\node[marker_red] (18) at (18,2) {};
\node[marker_red] (19) at (19,1) {};
\node[marker_red] (20) at (20,0) {};
\node[node_black] (21) at (21,-1) {};
\node[node_black] (22) at (22,0) {};

\path[style={draw,line width=0.8mm,blue}] (0) to (1);
\path[style={draw,line width=0.8mm,blue}] (1) to (2);
\path[style={draw,line width=0.8mm,blue}] (2) to (3);
\path[style={draw,line width=0.8mm,blue}] (3) to (4);
\path[style={draw,line width=0.8mm,blue}] (4) to (5);
\path[line_solid] (5) to (6);
\path[line_solid] (6) to (7);
\path[style={draw,line width=0.8mm,red}] (7) to (8);
\path[line_solid] (8) to (9);
\path[line_solid] (9) to (10);
\path[style={draw,line width=0.8mm,red}] (10) to (11);
\path[line_solid] (11) to (12);
\path[line_solid] (12) to (13);
\path[line_solid] (13) to (14);
\path[line_solid] (14) to (15);
\path[line_solid] (15) to (16);
\path[line_solid] (16) to (17);
\path[style={draw,line width=0.8mm,red}] (17) to (18);
\path[style={draw,line width=0.8mm,red}] (18) to (19);
\path[style={draw,line width=0.8mm,red}] (19) to (20);
\path[line_solid] (20) to (21);
\path[line_solid] (21) to (22);

\draw [->,red] (7) to [out=10,in=130] (10);
\draw [->,red] (10) to [out=10,in=130] (17);
\draw [->,red] (17) to [out=0,in=90] (18);
\draw [->,red] (18) to [out=0,in=90] (19);
\draw [->,red] (19) to [out=0,in=90] (20);
\draw [->,blue] (5) to [out=180,in=90] (4);
\draw [->,blue] (4) to [out=180,in=90] (3);
\draw [->,blue] (3) to [out=180,in=90] (2);
\draw [->,blue] (2) to [out=180,in=90] (1);
\draw [->,blue] (1) to [out=180,in=90] (0);

\node[anchor=west] at (0.5,6.5) {$x=1111101001001001100001$};
\node[anchor=west] at (21.7,-0.7) {$n$};
\node[anchor=north,red] at (7.3,4.5) {$1$};
\node[anchor=north,red] at (10.3,3.5) {$2$};
\node[anchor=north,red] at (17.3,2.5) {$3$};
\node[anchor=north,red] at (18.3,1.5) {$4$};
\node[anchor=north,red] at (19.3,0.5) {$5$};
\node[anchor=north,blue] at (4.7,4.5) {$1$};
\node[anchor=north,blue] at (3.7,3.5) {$2$};
\node[anchor=north,blue] at (2.7,2.5) {$3$};
\node[anchor=north,blue] at (1.7,1.5) {$4$};
\node[anchor=north,blue] at (0.7,0.5) {$5$};

\begin{scope}[xshift=25cm]
\tikzset{every node/.style={inner sep=0.5pt}}

 \matrix[anchor=west] at (0,3) (m) [matrix of nodes] {
  1&1&1&1&1&0&1&1&0&1&1&0&1&0&0&1&1&1&1&1&0&1 \\
  1&1&1&1&1&0&1&1&0&1&1&0&1&0&0&1&1&1&1&0&0&1 \\
  1&1&1&1&1&0&1&1&0&1&1&0&1&0&0&1&1&1&0&0&0&1 \\
  1&1&1&1&1&0&1&1&0&1&1&0&1&0&0&1&1&0&0&0&0&1 \\
  1&1&1&1&1&0&1&1&0&1&0&0&1&0&0&1&1&0&0&0&0&1 \\
  1&1&1&1&1&0&1&0&0&1&0&0&1&0&0&1&1&0&0&0&0&1 \\
  1&1&1&1&0&0&1&0&0&1&0&0&1&0&0&1&1&0&0&0&0&1 \\
  1&1&1&0&0&0&1&0&0&1&0&0&1&0&0&1&1&0&0&0&0&1 \\
  1&1&0&0&0&0&1&0&0&1&0&0&1&0&0&1&1&0&0&0&0&1 \\
  1&0&0&0&0&0&1&0&0&1&0&0&1&0&0&1&1&0&0&0&0&1 \\
  0&0&0&0&0&0&1&0&0&1&0&0&1&0&0&1&1&0&0&0&0&1 \\
 };
  \matrix[anchor=west] at (m.east) (v) [matrix of nodes] {
   \textcolor{red}{5}\\
   \textcolor{red}{4}\\
   \textcolor{red}{3}\\
   \textcolor{red}{2}\\
   \textcolor{red}{1}\\
   \textcolor{blue}{1}\\
   \textcolor{blue}{2}\\
   \textcolor{blue}{3}\\
   \textcolor{blue}{4}\\
   \textcolor{blue}{5}\\
  };
 
 \node[left,anchor=east] at (m-6-1) {$x={}$};
 
 \begin{scope}[on background layer]
  \fill[blue!50] (m-8-3.north west) rectangle (m-9-3.south east);
  \fill[blue!50] (m-9-2.north west) rectangle (m-10-2.south east);
  \fill[blue!50] (m-10-1.north west) rectangle (m-11-1.south east);
  \fill[blue!50] (m-7-4.north west) rectangle (m-8-4.south east);
  \fill[blue!50] (m-6-5.north west) rectangle (m-7-5.south east);
  \fill[red!50] (m-5-8.north west) rectangle (m-6-8.south east);
  \fill[red!50] (m-4-11.north west) rectangle (m-5-11.south east);
  \fill[red!50] (m-3-18.north west) rectangle (m-4-18.south east);
  \fill[red!50] (m-2-19.north west) rectangle (m-3-19.south east);
  \fill[red!50] (m-1-20.north west) rectangle (m-2-20.south east);
 \end{scope}
 
 \node at (-2,5) {$C_0(x)$};
\end{scope}

\end{tikzpicture}

\begin{tikzpicture}[scale=0.4,>=latex]

\draw[ptr] (-1,0) -- (24,0);
\draw[ptr] (0,-1) -- (0,7);
\draw (22,-0.2) -- (22,0.2);

\node[node_black] (0) at (0,0) {};
\node[marker_blue] (1) at (1,1) {};
\node[node_black] (2) at (2,2) {};
\node[marker_blue] (3) at (3,3) {};
\node[node_black] (4) at (4,4) {};
\node[marker_blue] (5) at (5,5) {};
\node[node_black] (6) at (6,4) {};
\node[marker_red] (7) at (7,5) {};
\node[node_black] (8) at (8,4) {};
\node[node_black] (9) at (9,3) {};
\node[node_black] (10) at (10,4) {};
\node[node_black] (11) at (11,3) {};
\node[node_black] (12) at (12,2) {};
\node[node_black] (13) at (13,3) {};
\node[node_black] (14) at (14,2) {};
\node[node_black] (15) at (15,1) {};
\node[node_black] (16) at (16,2) {};
\node[marker_red] (17) at (17,3) {};
\node[node_black] (18) at (18,2) {};
\node[marker_red] (19) at (19,1) {};
\node[node_black] (20) at (20,0) {};
\node[node_black] (21) at (21,-1) {};
\node[node_black] (22) at (22,0) {};

\path[line_solid] (0) to (1);
\path[style={draw,line width=0.8mm,blue}] (1) to (2);
\path[style={draw,line width=0.8mm,blue}] (2) to (3);
\path[style={draw,line width=0.8mm,blue}] (3) to (4);
\path[style={draw,line width=0.8mm,blue}] (4) to (5);
\path[style={draw,line width=0.8mm,red}] (5) to (6);
\path[style={draw,line width=0.8mm,blue}] (6) to (7);
\path[line_solid] (7) to (8);
\path[line_solid] (8) to (9);
\path[line_solid] (9) to (10);
\path[style={draw,line width=0.8mm,red}] (10) to (11);
\path[line_solid] (11) to (12);
\path[line_solid] (12) to (13);
\path[style={draw,line width=0.8mm,red}] (13) to (14);
\path[line_solid] (14) to (15);
\path[line_solid] (15) to (16);
\path[line_solid] (16) to (17);
\path[style={draw,line width=0.8mm,red}] (17) to (18);
\path[style={draw,line width=0.8mm,red}] (18) to (19);
\path[line_solid] (19) to (20);
\path[line_solid] (20) to (21);
\path[line_solid] (21) to (22);

\draw [->,red] (7) to [out=10,in=130] (17);
\draw [->,red] (17) to [out=0,in=90] (19);
\draw [->,blue] (5) to [out=180,in=90] (3);
\draw [->,blue] (3) to [out=180,in=90] (1);

\node[anchor=west] at (0.5,6.5) {$x=1111101001001001100001$};
\node[anchor=west] at (21.7,-0.7) {$n$};
\node[anchor=north,red] at (5.3,4.5) {$1$};
\node[anchor=north,red] at (10.3,3.5) {$2$};
\node[anchor=north,red] at (13.3,2.5) {$3$};
\node[anchor=north,red] at (17.3,2.5) {$5$};
\node[anchor=north,red] at (18.3,1.5) {$4$};
\node[anchor=north,blue] at (6.7,4.5) {$1$};
\node[anchor=north,blue] at (3.7,3.5) {$2$};
\node[anchor=north,blue] at (4.7,4.5) {$3$};
\node[anchor=north,blue] at (1.7,1.5) {$4$};
\node[anchor=north,blue] at (2.7,2.5) {$5$};

\begin{scope}[xshift=25cm]
\tikzset{every node/.style={inner sep=0.5pt}}
 \matrix[anchor=west] at (0,3) (m) [matrix of nodes] {
  1&1&1&1&1&1&1&0&0&1&1&0&1&1&0&1&1&1&1&0&0&1 \\
  1&1&1&1&1&1&1&0&0&1&1&0&1&1&0&1&1&0&1&0&0&1 \\
  1&1&1&1&1&1&1&0&0&1&1&0&1&1&0&1&1&0&0&0&0&1 \\
  1&1&1&1&1&1&1&0&0&1&1&0&1&0&0&1&1&0&0&0&0&1 \\
  1&1&1&1&1&1&1&0&0&1&0&0&1&0&0&1&1&0&0&0&0&1 \\
  
  1&1&1&1&1&0&1&0&0&1&0&0&1&0&0&1&1&0&0&0&0&1 \\
  
  1&1&1&1&1&0&0&0&0&1&0&0&1&0&0&1&1&0&0&0&0&1 \\
  1&1&1&0&1&0&0&0&0&1&0&0&1&0&0&1&1&0&0&0&0&1 \\
  1&1&1&0&0&0&0&0&0&1&0&0&1&0&0&1&1&0&0&0&0&1 \\
  1&0&1&0&0&0&0&0&0&1&0&0&1&0&0&1&1&0&0&0&0&1 \\
  1&0&0&0&0&0&0&0&0&1&0&0&1&0&0&1&1&0&0&0&0&1 \\
 };
 \matrix[anchor=west] at (m.east) (v) [matrix of nodes] {
   \textcolor{red}{5}\\
   \textcolor{red}{4}\\
   \textcolor{red}{3}\\
   \textcolor{red}{2}\\
   \textcolor{red}{1}\\
   \textcolor{blue}{1}\\
   \textcolor{blue}{2}\\
   \textcolor{blue}{3}\\
   \textcolor{blue}{4}\\
   \textcolor{blue}{5}\\
  };
 
 \node[left,anchor=east] at (m-6-1) {$x={}$};
 
 \begin{scope}[on background layer]
  \fill[blue!50] (m-8-5.north west) rectangle (m-9-5.south east);
  \fill[blue!50] (m-9-2.north west) rectangle (m-10-2.south east);
  \fill[blue!50] (m-10-3.north west) rectangle (m-11-3.south east);
  \fill[blue!50] (m-7-4.north west) rectangle (m-8-4.south east);
  \fill[blue!50] (m-6-7.north west) rectangle (m-7-7.south east);
  \fill[red!50] (m-5-6.north west) rectangle (m-6-6.south east);
  \fill[red!50] (m-4-11.north west) rectangle (m-5-11.south east);
  \fill[red!50] (m-3-14.north west) rectangle (m-4-14.south east);
  \fill[red!50] (m-2-19.north west) rectangle (m-3-19.south east);
  \fill[red!50] (m-1-18.north west) rectangle (m-2-18.south east);
 \end{scope}
 
 \node at (-2,5) {$C_1(x)$};
\end{scope}

\end{tikzpicture}
\caption{The labeling procedures that define the symmetric chains~$C_0(x)$ (top) and~$C_1(x)$ (bottom).
The markers that define the upward and downward steps of the chains are drawn as a square and a diamond, respectively.
The resulting chain~$C_0(x)$ is the same as the one shown in Figure~\ref{fig:paren}.
}
\label{fig:d01flip}
\end{figure}
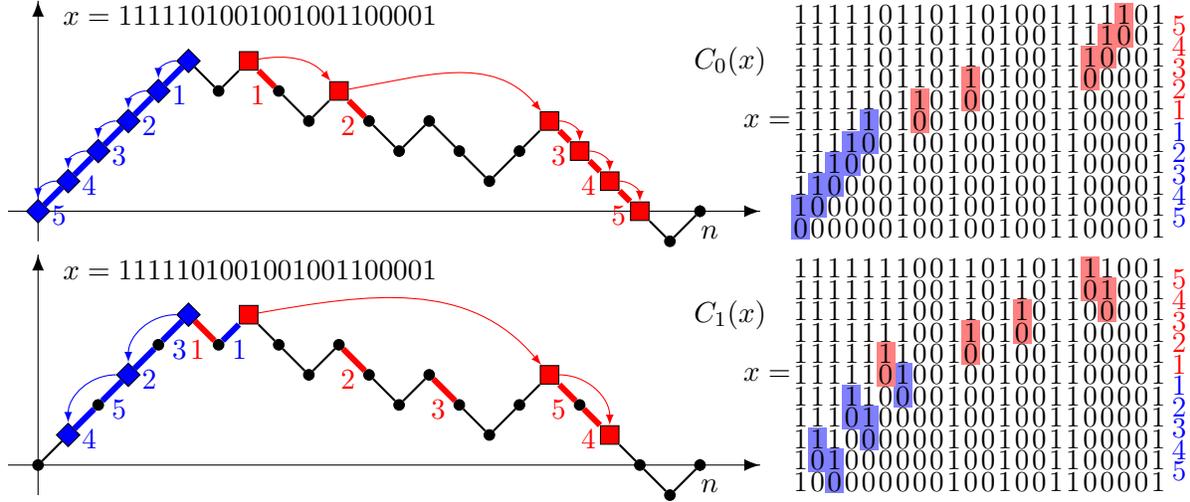

\begin{proof}[Proof of Theorem~\ref{thm:4scds-even}]
We first define a set~$\cD_1$ of chains in~$Q_n$ for even values of~$n\geq 2$ via a labeling rule similar to the rule for~$\cD_0$ described before.
From this definition it follows immediately that all chains in~$\cD_1$ are symmetric.
We then use an equivalent characterization of~$\cD_1$ as the union of certain lexical matchings to show that the chains in~$\cD_1$ form a partition of all vertices of~$Q_n$, proving that $\cD_1$ is an SCD, and that $\cD_0$, $\ol{\cD_0}$, $\cD_1$ and~$\ol{\cD_1}$ are pairwise edge-disjoint.

For even~$n\geq 2$, we consider a vertex~$x \in L_{n,n/2}$ in the middle level~$n/2$ of~$Q_n$.
We interpret it as a lattice path, and label some of its $\downstep$-steps as follows; see the bottom part of Figure~\ref{fig:d01flip}:
\begin{enumerate}[label=(\alph*1)]
\item We place a marker at the rightmost highest point of~$x$ and set~$j:=1$.
If there is a $\downstep$-step to the left of the marker starting at the same height, we label the nearest such step with~$1$ and set~$j:=2$.
\item If the marker is at height~$h\geq 2$, we label the rightmost $\downstep$-step starting at height~$h-1$ with~$j$.
We consider all $\downstep$-steps starting at height~$h-2$ to the right of the labeled step and the $\downstep$-step starting at the marker, we label the second step from the right from this set with~$j+1$, and we move the marker to the starting point of the rightmost $\downstep$-step starting at height~$h-2$.
We set $j:=j+2$ and repeat.
\item If the marker is at height~$h=1$ or~$h=0$, we stop.
\end{enumerate}
We let~$C_1(x)$ denote the chain obtained by flipping bits according to this labeling rule and the corresponding symmetric rule obtained by interchanging left and right, $\downstep$-steps and $\upstep$-steps, and starting with ending points.
Observe that $C_1(x)$ is a symmetric chain, as the height of the marker decreases by~2 in each iteration (and we label two steps in each iteration) and the conditional labeling with~$j=1$ in step~(a1) occurs if and only if the highest point of~$x$ is not unique, so the number of edges we move up from~$x$ equals the number of edges we move down from~$x$.
At this point it is not at all clear yet that the chains~$C_1(x)$, $x\in L_{n,n/2}$, are disjoint, nor that they cover all vertices of~$Q_n$.
This is what we will argue about next, which will prove that
\begin{equation}
\label{eq:def-D1}
  \cD_1:=\bigcup\nolimits_{x\in L_{n,n/2}} C_1(x)
\end{equation}
is actually an SCD of~$Q_n$.

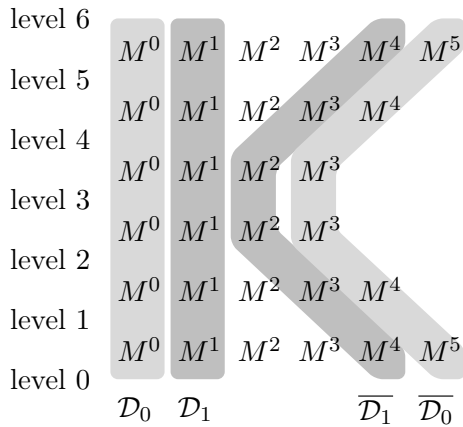
\begin{figure}
\begin{tikzpicture}[scale=0.4,>=latex]

\fill [rounded corners=1mm,fill=gray!30] (2,0) -- (3.8,0) -- (3.8,12) -- (2,12) -- cycle;
\fill [rounded corners=1mm,fill=gray!50] (4,0) -- (5.8,0) -- (5.8,12) -- (4,12) -- cycle;
\fill [rounded corners=1mm,fill=gray!50] (10.8,0) -- (11.8,0) -- (11.8,1) -- (7.5,5) -- (7.5,7) -- (11.8,11) -- (11.8,12) -- (10.8,12) -- (6,7.5) -- (6,4.5) -- cycle;
\fill [rounded corners=1mm,fill=gray!30] (12.8,0) -- (13.8,0) -- (13.8,1) -- (9.5,5) -- (9.5,7) -- (13.8,11) -- (13.8,12) -- (12.8,12) -- (8,7.5) -- (8,4.5) -- cycle;

\node at (0,0) {level $0$};
\node at (0,2) {level $1$};
\node at (0,4) {level $2$};
\node at (0,6) {level $3$};
\node at (0,8) {level $4$};
\node at (0,10) {level $5$};
\node at (0,12) {level $6$};
\foreach \i in {1,3,5,7,9,11}
{
  \node[anchor=west] at (1.9,\i) {$M^0$};
  \node[anchor=west] at (3.9,\i) {$M^1$};
  \node[anchor=west] at (5.9,\i) {$M^2$};
  \node[anchor=west] at (7.9,\i) {$M^3$};
}
\node[anchor=west] at (9.9,1) {$M^4$};
\node[anchor=west] at (9.9,3) {$M^4$};
\node[anchor=west] at (9.9,9) {$M^4$};
\node[anchor=west] at (9.9,11) {$M^4$};
\node[anchor=west] at (11.9,1) {$M^5$};
\node[anchor=west] at (11.9,11) {$M^5$};

\node[anchor=west] at (1.9,-1) {$\cD_0$};
\node[anchor=west] at (3.9,-1) {$\cD_1$};
\node[anchor=west] at (9.9,-1) {$\ol{\cD_1}$};
\node[anchor=west] at (11.9,-1) {$\ol{\cD_0}$};
\end{tikzpicture}
\caption{Unions of lexical matchings~$M^i=M_{n,k}^i$ yielding edge-disjoint chain decompositions in~$Q_n$ for~$n=6$.
The resulting chains in~$\cD_0$ and~$\cD_1$ in~$Q_6$ are shown in Figure~\ref{fig:d01}.
}
\label{fig:d01lex}
\end{figure}

By property~(i) from Lemma~\ref{lem:lex}, for any sequence $\textbf{i}:=(i_0,i_1,\ldots,i_{n-1})$, $i_k\in\{0,1,\ldots,\max\{k,n-k-1\}\}$, the union
\begin{equation}
\label{eq:def-Di}
  \cD_\mathbf{i}:=\bigcup\nolimits_{k=0}^{n-1}M_{n,k}^{i_k}
\end{equation}
is a chain decomposition of~$Q_n$.
The resulting chains are not necessarily symmetric, though.
From the definitions in Section~\ref{sec:lex} it also follows that $\cD_0$ equals the union of the 0-lexical matchings, and that for even~$n\geq 2$, $\cD_1$ as defined in~\eqref{eq:def-D1} equals the union of the 1-lexical matchings; formally we have
\begin{equation*}
  \cD_0=\cD_{(0,0,\ldots,0)}=\bigcup\nolimits_{k=0}^{n-1} M_{n,k}^0 \enspace, \quad 
  \cD_1=\cD_{(1,1,\ldots,1)}=\bigcup\nolimits_{k=0}^{n-1} M_{n,k}^1 \enspace.
\end{equation*}
Consequently, $\cD_1$ is indeed a chain decomposition, and by the definition of~$\cD_1$ via the labeling procedure, all chains in this decomposition are symmetric, so~$\cD_1$ is indeed an SCD.
The fact that $\cD_0$, $\ol{\cD_0}$, $\cD_1$ and~$\ol{\cD_1}$ are pairwise edge-disjoint can be seen by applying property~(iii) from Lemma~\ref{lem:lex} and by observing that by property~(ii), $\cD_\textbf{i}$ and~$\cD_\textbf{j}$ as defined in~\eqref{eq:def-Di} are edge-disjoint if and only if the sequences~$\mathbf{i}$ and~$\mathbf{j}$ differ in every corresponding entry; see Figure~\ref{fig:d01lex}.

This completes the proof.
\end{proof}

Clearly, $\cD_{(0,0,\ldots,0)}$ as defined in~\eqref{eq:def-Di} equals~$\cD_0$ for \emph{every}~$n\geq 1$, so the union of all 0-lexical matchings forms an SCD in any dimension.
In contrast to that, the union of all 1-lexical matchings $\cD_{(1,1,\ldots,1)}$ only forms an SCD for \emph{even}~$n\geq 2$.
Computer experiments show that for~$n\in\{8,10\}$ there is no union of lexical matchings~$\cD_{\mathbf{i}}$ for any sequence~$\mathbf{i}$ that forms an SCD that is edge-disjoint from~$\cD_0$, $\ol{\cD_0}$, $\cD_1$ and~$\ol{\cD_1}$; recall Table~\ref{tab:small-scds}.
% One can also show that $\cD_{(i,i,\ldots,i)}$ is not an SCD unless~$i=0$, or~$i=1$ and~$n$ is even.

Furthermore, taking unions of the so-called modular matchings introduced by Duffus, Kierstead, and Snevily~\cite{MR1268348} does not yield SCDs in~$Q_n$ for~$n=5$ and~$n=7$, and only two edge-disjoint SCDs for~$n=6$.

\subsection{Proof of Theorem~\ref{thm:prod}}
\label{sec:prod-proof}

\begin{figure}
\makebox[0cm]{ % artificial box to center the picture
\begin{tikzpicture}[scale=0.4,>=latex]

% big Q_3
\node[node_huge] (b0) at (13,-2) {};
\node[node_huge] (b1) at (9,2) {};
\node[node_huge] (b2) at (13,2) {};
\node[node_huge] (b3) at (17,2) {};
\node[node_huge] (b4) at (9,6) {};
\node[node_huge] (b5) at (13,6) {};
\node[node_huge] (b6) at (17,6) {};
\node[node_huge] (b7) at (13,10) {};
\path[style={draw,line width=1mm}] (b0) to (b1);
\path[style={draw,line width=1mm,dotted}] (b0) to (b3);
\path[style={draw,line width=1mm}] (b1) to (b4);
\path[style={draw,line width=1mm,dotted}] (b1) to (b5);
\path[style={draw,line width=1mm,dotted}] (b2) to (b4);
\path[style={draw,line width=1mm}] (b2) to (b6);
\path[style={draw,line width=1mm}] (b3) to (b5);
\path[style={draw,line width=1mm,dotted}] (b3) to (b6);
\path[style={draw,line width=1mm}] (b4) to (b7);
\path[style={draw,line width=1mm,dotted}] (b6) to (b7);

% all copies of small Q_2's
\foreach \i/\x/\y in {1/2.7/9,2/13/-3,3/9/1,4/13/1,5/17/1,6/9/5,7/13/5,8/17/5,9/13/9} {
  \node[node_black] (\i0) at (\x,\y) {};
  \node[node_black] (\i1) at (\x-1,\y+1) {};
  \node[node_black] (\i2) at (\x+1,\y+1) {};
  \node[node_black] (\i3) at (\x,\y+2) {};
  \path[line_solid] (\i0) to (\i1);
  \path[line_solid] (\i1) to (\i3);
  \path[line_dotted] (\i0) to (\i2);
  \path[line_dotted] (\i2) to (\i3);
}

% small Q_3
\node[node_black] (a0) at (7,8) {};
\node[node_black] (a1) at (6,9) {};
\node[node_black] (a2) at (7,9) {};
\node[node_black] (a3) at (8,9) {};
\node[node_black] (a4) at (6,10) {};
\node[node_black] (a5) at (7,10) {};
\node[node_black] (a6) at (8,10) {};
\node[node_black] (a7) at (7,11) {};
\path[line_solid] (a0) to (a1);
\path[line_dotted] (a0) to (a3);
\path[line_solid] (a1) to (a4);
\path[line_dotted] (a1) to (a5);
\path[line_dotted] (a2) to (a4);
\path[line_solid] (a2) to (a6);
\path[line_solid] (a3) to (a5);
\path[line_dotted] (a3) to (a6);
\path[line_solid] (a4) to (a7);
\path[line_dotted] (a6) to (a7);

% grids
% gray chains in the background from left to right
\path[line_gray_bg] (22,1.5) -- (20,3.5) -- (23,6.5);
\path[line_gray_bg] (23,2.5) -- (22,3.5) -- (24,5.5);
\path[line_gray_bg] (24,3.5) -- (25,4.5);
\path[line_gray_bg] (26,2.5) -- (29,5.5);
\path[line_gray_bg] (32,2.5) -- (30,4.5) -- (31,5.5);
\path[line_gray_bg] (33,3.5) -- (32,4.5);
\path[line_gray_bg] (36,2.5) -- (34,4.5) -- (35,5.5);
\path[line_gray_bg] (37,3.5) -- (36,4.5);
\path[line_gray_bg] (38,3.5) -- (39,4.5);
\path[line_gray_bg] (40,3.5) -- (41,4.5);
% grids in the foreground from left to right
\foreach \i in {1,2,3} {
\foreach \j in {1,2,3,4} {
  \node[node_black] (c\i\j) at (22-\i+\j,-0.5+\i+\j) {};
}
}
\path[line_solid] (c11) to (c31);
\path[line_solid] (c12) to (c32);
\path[line_solid] (c13) to (c33);
\path[line_solid] (c14) to (c34);
\path[line_solid] (c11) to (c14);
\path[line_solid] (c21) to (c24);
\path[line_solid] (c31) to (c34);

\foreach \j in {1,2,3,4} {
  \node[node_black] (d\j) at (25+\j,1.5+\j) {};
}
\path[line_solid] (d1) to (d4);

\foreach \i in {1,2,3} {
\foreach \j in {1,2} {
  \node[node_black] (e\i\j) at (32-\i+\j,0.5+\i+\j) {};
}
}
\path[line_solid] (e11) to (e31);
\path[line_solid] (e12) to (e32);
\path[line_solid] (e11) to (e12);
\path[line_solid] (e21) to (e22);
\path[line_solid] (e31) to (e32);

\foreach \i in {1,2,3} {
\foreach \j in {1,2} {
  \node[node_black] (f\i\j) at (36-\i+\j,0.5+\i+\j) {};
}
}
\path[line_solid] (f11) to (f31);
\path[line_solid] (f12) to (f32);
\path[line_solid] (f11) to (f12);
\path[line_solid] (f21) to (f22);
\path[line_solid] (f31) to (f32);

\node[node_black] (g1) at (38,3.5) {};
\node[node_black] (g2) at (39,4.5) {};
\path[line_solid] (g1) to (g2);

\node[node_black] (h1) at (40,3.5) {};
\node[node_black] (h2) at (41,4.5) {};
\path[line_solid] (h1) to (h2);

% legend
\path[line_solid] (2.6,7.5) to (5,7.5);
\path[line_dotted] (2.6,6.5) to (5,6.5);

% labels
\node[anchor=west] at (1.5,12.3) {$Q_a$};
\node[anchor=west] at (6,12.3) {$Q_b$};
\node[anchor=west] at (11,12.3) {$Q_{a+b}=Q_a\times Q_b$};
\node[anchor=west] at (0.8,11) {$A$};
\node[anchor=west] at (0.8,7.5) {$\cA_1$};
\node[anchor=west] at (0.8,6.5) {$\cA_2$};
\node[anchor=west] at (5.2,11) {$B$};
\node[anchor=west] at (5,7.5) {$\cB_1$};
\node[anchor=west] at (5,6.5) {$\cB_2$};
\node[anchor=west] at (21,8.8) {$A\times B$};
\node[anchor=west] at (29,8.3) {$\cC_1$};
\node[anchor=west] at (19.2,4.7) {$C_1$};
\node[anchor=west] at (20.3,1) {$(x_1,y_1)$};
\node[anchor=west] at (24,5.2) {$(x_1,y_\beta)$};
\node[anchor=west] at (21.3,7.3) {$(x_\alpha,y_\beta)$};

\end{tikzpicture}
}
\caption{Illustration of the proof of Theorem~\ref{thm:prod}.
Construction of two edge-disjoint SCDs in~$Q_5$ from two edge-disjoint SCDs in~$Q_2$ and two edge-disjoint SCDs in~$Q_3$.
The chains of the SCD~$\cC_1$ of~$Q_5$ as constructed in the proof are highlighted in gray.
}
\label{fig:prod}
\end{figure}
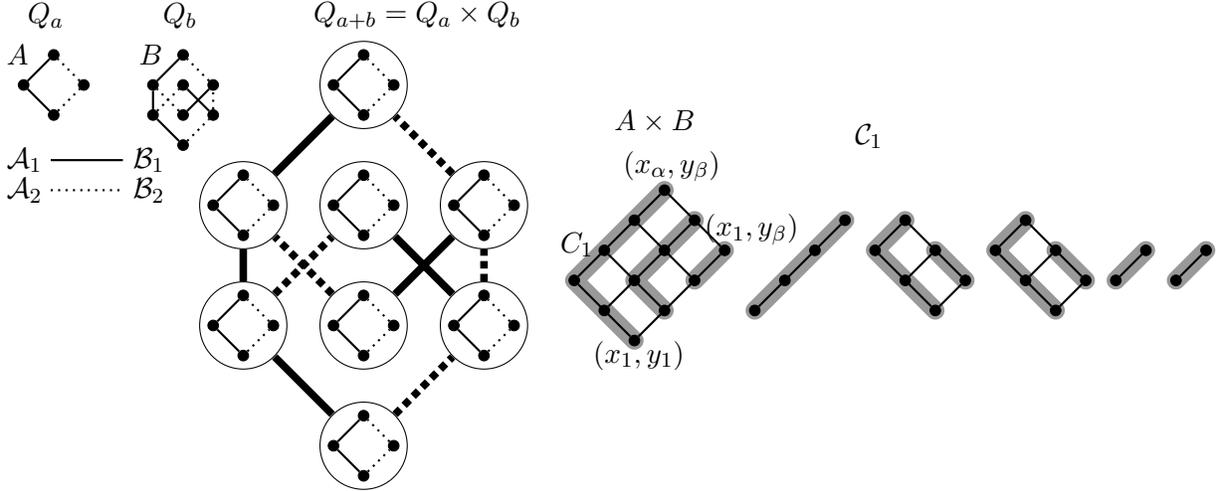

\begin{proof}[Proof of Theorem~\ref{thm:prod}]
For the reader's convenience, this proof is illustrated in Figure~\ref{fig:prod}.
Assume that $\cA_1,\cA_2,\ldots,\cA_k$ and $\cB_1,\cB_2,\ldots,\cB_k$ denote $k$ pairwise edge-disjoint SCDs of~$Q_a$ and~$Q_b$, respectively.
We will think of~$Q_{a+b}$ as the Cartesian product~$Q_a\times Q_b$ of~$Q_a$ and~$Q_b$.
We show how to construct for every~$i\in [k]$ an SCD~$\cC_i$ of $Q_{a+b}=Q_a \times Q_b$ which uses only edges of the form $((u,v),(u',v'))$ where $(u,u')$ is an edge from~$\cA_i$ or~$(v,v')$ is an edge from~$\cB_i$.
From this it follows that the SCDs $\cC_1,\cC_2,\ldots,\cC_k$ are pairwise edge-disjoint.

The SCD~$\cC_i$ of~$Q_{a+b}$ is defined as follows:
The Cartesian products~$A\times B$ of chains~$A\in\cA_i$ and~$B\in\cB_i$ partition the vertices of~$Q_{a+b}$ into two-dimensional grids.
$\cC_i$ is obtained by partitioning each of those grids into symmetric chains in the natural way; see Figure~\ref{fig:prod} (cf.~\cite{MR0043115}):
Specifically, let $A=:(x_1,\ldots,x_\alpha)$ and $B=:(y_1,\ldots,y_\beta)$ be the vertices in the chains~$A$ and~$B$ from bottom to top.
As~$A$ and~$B$ are symmetric, we know that $|x_1|+|x_\alpha|=a$ and $|y_1|+|y_\beta|=b$, where $|x|$ denotes the Hamming weight of the bitstring~$x$.
This implies that $|(x_1,y_1)|+|(x_\alpha,y_\beta)|=|x_1|+|y_1|+|x_\alpha|+|y_\beta|=a+b$, i.e., the bottom and top vertex of the grid~$A\times B$ are on symmetric levels in~$Q_{a+b}$.
We may therefore decompose~$A\times B$ into disjoint symmetric chains~$C_j$, $j=1,2,\ldots,\min\{\alpha,\beta\}$, by setting
\begin{equation}
\label{eq:Cj0}
  C_j := \big((x_1,y_j),(x_2,y_j),\ldots,(x_{\alpha-j+1},y_j),(x_{\alpha-j+1},y_{j+1}),\ldots,(x_{\alpha-j+1},y_\beta)\big) \enspace.
\end{equation}
\end{proof}

Note that in the proof of Theorem~\ref{thm:prod} we have some degrees of freedom in partitioning the two-dimensional grids~$A\times B$ into symmetric chains.
If we perform this construction using $Q_{n+1}=Q_n\times Q_1$ for $n=1,2,\ldots$ and always partition the grids according to~\eqref{eq:Cj0}, then the resulting SCD equals~$\cD_0$.
If instead we partition always according to
\begin{equation}
\label{eq:Cj0p}
  C_j := \big((x_j,y_1),(x_j,y_2),\ldots,(x_j,y_{\beta-j+1}),(x_{j+1},y_{\beta-j+1}),\ldots,(x_\alpha,y_{\beta-j+1})\big) \enspace,
\end{equation}
then we get the SCD~$\ol{\cD_0}$.
The difference between~\eqref{eq:Cj0} and~\eqref{eq:Cj0p} is whether in building~$C_j$ we first move along the first coordinates, or first along the last coordinates.

\subsection{Proof of Theorem~\ref{thm:4scds-odd}}
\label{sec:4scds-odd}

We begin by constructing the SCDs in~$Q_5$ and~$Q_7$ mentioned in Table~\ref{tab:small-scds}.

\begin{lemma}
\label{lem:q57}
$Q_5$ contains three pairwise edge-disjoint SCDs, $Q_7$ contains four pairwise edge-disjoint SCDs, and this is best possible.
\end{lemma}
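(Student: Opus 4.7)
The plan is to derive the ``best possible'' part by a short counting argument, and then to establish the constructions by exhibiting the SCDs explicitly.

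\emph{Upper bound.} I would first observe that in any SCD of~$Q_n$ with~$n$ odd, every chain passing through the middle level~$\lfloor n/2\rfloor$ is symmetric and so also meets level~$\lceil n/2\rceil$, contributing exactly one edge between these two middle levels. Since every vertex of level~$\lfloor n/2\rfloor$ lies in exactly one chain, each SCD induces a perfect matching in the bipartite subgraph of~$Q_n$ between the two middle levels. As this subgraph is $\lceil n/2\rceil$-regular for odd~$n$, there are at most $\lceil n/2\rceil$ pairwise edge-disjoint SCDs. Specialising to $n=5$ and $n=7$ yields the upper bounds~$3$ and~$4$, respectively.

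\emph{Constructions.} For~$Q_5$ I would exhibit three SCDs $\cX_5,\cY_5,\cZ_5$, each consisting of one chain of length~$6$, four chains of length~$4$, and five chains of length~$2$, such that their restrictions to the middle two levels give a proper $3$-edge-colouring of the $3$-regular middle bipartite graph, and such that the edges between any other consecutive pair of levels used by the three SCDs are also pairwise disjoint. For~$Q_7$ I would exhibit two SCDs $\cX_7,\cY_7$ and take $\ol{\cX_7},\ol{\cY_7}$ to be their bitwise complements; since the complement of an SCD is an SCD, this produces four SCDs. Using $\ol{\ol{\cX}}=\cX$ and the fact that edge-disjointness is preserved under simultaneous complementation, the six pairwise edge-disjointness tests among $\cX_7,\ol{\cX_7},\cY_7,\ol{\cY_7}$ reduce to the four essentially distinct checks: $\cX_7$ versus $\ol{\cX_7}$, $\cY_7$ versus $\ol{\cY_7}$, $\cX_7$ versus $\cY_7$, and $\cX_7$ versus $\ol{\cY_7}$.

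\emph{Main obstacle.} The principal difficulty is that, unlike $\cD_0$ and $\cD_1$, these SCDs do not appear to admit a clean closed-form description; finding them is essentially an ad hoc combinatorial search over the comparatively few perfect matchings of the middle bipartite graph together with compatible extensions to the outer levels, which is best handled by computer. Once the chain systems $\cX_5,\cY_5,\cZ_5$ and $\cX_7,\cY_7$ are explicitly written down, verification is a finite, mechanical check: each listed chain is symmetric by construction, the chains in each of $\cX_5,\cY_5,\cZ_5$ partition the $32$ vertices of~$Q_5$, the chains in each of $\cX_7,\cY_7$ partition the $128$ vertices of~$Q_7$, and the relevant pairwise intersections of edge sets are empty. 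For structures of this size these checks are entirely routine.
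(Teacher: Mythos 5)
Your upper bound argument is exactly the paper's: every chain in an SCD of $Q_n$ with $n$ odd passes through the two middle levels and contributes one edge of the $\lceil n/2\rceil$-regular middle bipartite graph, so each SCD induces a perfect matching there and at most $\lceil n/2\rceil$ of them can be pairwise edge-disjoint, giving $3$ for $n=5$ and $4$ for $n=7$. The divergence is in the construction. You propose a direct computer search over $Q_5$ and $Q_7$ followed by a mechanical check; that would indeed suffice in principle. The paper instead passes to the necklace poset $N_n$: delete the all-zero and all-one vertices of $Q_n$ and identify bitstrings up to rotation, keeping track of edge multiplicities. When $n$ is prime every non-trivial necklace class has exactly $n$ representatives, so an SCD (and edge-disjointness) in $N_n$ lifts to $Q_n$ by rotating a chain representative through all $n$ rotations, after extending one longest chain by the two deleted vertices. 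This collapses the search by roughly a factor of $n$ and lets the three SCDs of $N_5$ and four of $N_7$ be drawn and checked in a small picture rather than found by unguided search. Your observation that complementation reduces the six pairwise checks in $Q_7$ to four is correct and matches the paper, where two of the four SCDs of $N_7$ are complements of the other two. The one genuine gap is that your proposal never actually produces $\cX_5,\cY_5,\cZ_5$ or $\cX_7,\cY_7$; since the lemma is precisely an existence claim, exhibiting these objects (in $N_n$ or $Q_n$) is the substance, and without them the argument remains a plan rather than a proof.
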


\begin{figure}[b!]
\makebox[0cm]{ % artificial box to center the picture
\includegraphics[scale=0.916]{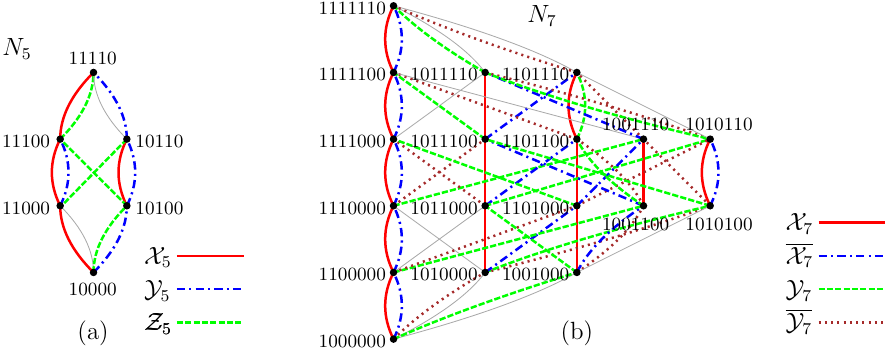}
}
\caption{Illustration of the three edge-disjoint SCDs in~$N_5$ (a) and four edge-disjoint SCDs in~$N_7$ (b).
The names of the SCDs correspond to the ones used in Table~\ref{tab:small-scds}.
In~$N_7$, two pairs of SCDs are complementary.
}
\label{fig:n57}
\end{figure}

\begin{proof}
For prime~$n$, we consider the graph~$Q_n$ with the two vertices in the outermost levels~0 and~$n$ removed, and we identify all bitstrings that differ only by rotation into so-called \emph{necklaces}.
The resulting graph~$N_n$ is a multigraph version of the cover graph of the \emph{necklace poset}.
Specifically, the multiplicity of the edges in~$N_n$ corresponds to the number of ways a bit from a necklace can be flipped to reach the corresponding adjacent necklace.
Here we use that $n$ is a prime number, so every necklace in~$N_n$ corresponds to exactly $n$ bitstrings in~$Q_n$.
For example, in~$N_5$ the necklace~$x:=10000$ has two edges leading to~$y:=11000$, as we can flip the second or the fifth bit in~$x$ to reach~$y$.
This way, every necklace on level~$k$ has $n-k$~edges going up, and $k$~edges going down, like the vertices in~$Q_n$.
The multigraphs~$N_5$ and~$N_7$ are shown in Figure~\ref{fig:n57}.
Now observe that every SCD in~$N_n$ corresponds to an SCD in~$Q_n$, by turning each chain from~$N_n$ into $n$~chains in~$Q_n$ obtained by rotating a representative of each necklace in all possible ways.
Moreover, one of the chains of length~$n-2$ needs to be extended by the all-zero and all-one bitstring to a chain of length~$n$ in~$Q_n$.
Observe that in this way, $k$ edge-disjoint SCDs in~$N_n$ give rise to $k$ edge-disjoint SCDs in~$Q_n$.

As~$n=5$ and~$n=7$ are prime, we thus obtain three edge-disjoint SCDs in~$Q_5$ from the SCDs in~$N_5$ shown in Figure~\ref{fig:n57}~(a), and four edge-disjoint SCDs in~$Q_7$ from the SCDs in~$N_7$ shown in Figure~\ref{fig:n57}~(b).
These SCDs use up all middle edges, so this is best possible.
\end{proof}

\begin{proof}[Proof of Theorem~\ref{thm:4scds-odd}]
For~$n=7$ the statement follows from Lemma~\ref{lem:q57}.
For odd~$n\geq 13$ we apply Theorem~\ref{thm:prod} to~$Q_{n-7}$ and~$Q_7$, using the four edge-disjoint SCDs in~$Q_{n-7}$ given by Theorem~\ref{thm:4scds-even} (note that $n-7\geq 6$), and the four edge-disjoint SCDs in~$Q_7$ given by Lemma~\ref{lem:q57}.
\end{proof}

\section{The middle four levels problem}
\label{sec:gmlc4}

In this section we prove Theorem~\ref{thm:gmlc4}.
The proof proceeds similarly as the proof of the middle two levels problem~\cite{MR3483129, gregor-muetze-nummenpalo:18}.
First, we construct a cycle factor~$\cC_{2n+1}$ in the middle four levels of~$Q_{2n+1}$, and we then modify the cycles in the factor locally to join them to a Hamilton cycle.
Specifically, to join two cycles~$C$ and~$C'$ from our cycle factor, we consider a suitable 6-cycle and take the symmetric difference between their edge sets, so that the result is a single cycle on the union of the vertex sets of~$C$ and~$C'$; see Figure~\ref{fig:c6xy}.
This process is iterated until all cycles are joined to a single Hamilton cycle.
This technique reduces the problem of proving that the middle four levels of~$Q_{2n+1}$ have a Hamilton cycle to the problem of proving that a suitably defined auxiliary graph~$\cH_{n+1}$ has a spanning tree, which is much easier.
This section is organized as follows: We first define the cycle factor~$\cC_{2n+1}$ and analyze its structure.
We then introduce the 6-cycles for the joining operations, and finally show that they can be used to join the cycles of the factor to a Hamilton cycle in the desired fashion.

\subsection{Construction of the cycle factor~\texorpdfstring{$\cC_{2n+1}$}{C2n+1}}
\label{sec:2factor4}

As we are not able to analyze the cycle factor in the middle four levels of~$Q_{2n+1}$ arising from the proof of Theorem~\ref{thm:gmlc-2fac} (see Table~\ref{tab:2factors1} in Section~\ref{sec:exp}), we start with a different construction.
To construct the cycle factor~$\cC_{2n+1}$ in the middle four levels of~$Q_{2n+1}$, we use the $i$-lexical matchings defined in Section~\ref{sec:lex}.
Specifically, we take the union of all $n$-lexical and $(n+1)$-lexical matching edges between the upper two levels~$n+1$ and~$n+2$ and between the lower two levels~$n-1$ and~$n$, plus certain carefully chosen edges~$E$ from the $(n-2)$-lexical, $(n-1)$-lexical and $n$-lexical matching between the middle levels~$n$ and~$n+1$.
Formally, we set
\begin{equation}
\label{eq:def-C}
  \cC_{2n+1}:=(M_{2n+1,n+1}^n \cup M_{2n+1,n+1}^{n+1}) \; \cup \; (M_{2n+1,n-1}^n \cup M_{2n+1,n-1}^{n+1}) \; \cup E \enspace,
\end{equation}
where the set of edges~$E$ is defined in~\eqref{eq:def-E} below.
By this definition and by Lemma~\ref{lem:lex}~(i) and (ii), all vertices in the outer levels~$n-1$ and~$n+2$ have degree two in the subgraph~$\cC_{2n+1}$, and we will choose~$E$ so that all vertices in the inner levels~$n$ and~$n+1$ have degree two as well.

To define the set~$E$, we consider the union of the matchings between the upper two levels
\begin{equation}
\label{eq:def-P}
  \cP:=M_{2n+1,n+1}^n \cup M_{2n+1,n+1}^{n+1} \enspace.
\end{equation}
In the following, for a set of bitstrings~$X$ and a bitstring~$x$, we write~$X\circ x$ for the set obtained by concatenating each bitstring from~$X$ with~$x$.
The set $\cP\circ 00$ is a set of edges in the middle two levels of the $(2n+3)$-cube, and from~\cite[Proposition~2~(i)+(ii)+(iv)]{gregor-muetze-nummenpalo:18}, which talks about a superset of the paths~$\cP\circ 0$, we obtain the following properties.

\begin{lemma}
\label{lem:paths}
For any~$n\geq 1$, the set~$\cP$ defined in~\eqref{eq:def-P} is a set of paths (without any cycles), and the sets of first and last vertices of these paths are $D_{2n+1,n+1}^{=0}$ and $D_{2n+1,n+1}^-$, respectively.
Furthermore, for any path with first vertex $x\in D_{2n+1,n+1}^{=0}$ and last vertex $y\in D_{2n+1,n+1}^-$, if $x=(1,u,0,v)$ is the canonical decomposition of~$x$, then $y=(u,0,1,v)$.
\end{lemma}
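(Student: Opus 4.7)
The plan is to examine each vertex $x\in L_{2n+1,n+1}$ and decide whether it is matched by $M:=M^{n,\uparrow}_{2n+1,n+1}$ and by $M':=M^{n+1,\uparrow}_{2n+1,n+1}$, then glue the local information together. The lifted lattice path $x^\uparrow$ has $n+2$ down-steps, of which the two appended ones (at heights $1\to 0$ and $0\to -1$) sit at the very right end and hence are the rightmost down-steps in their respective rows. Since $M$ flips the second-to-last and $M'$ the last scanned down-step in the top-to-bottom, right-to-left order, the question of which matchings cover~$x$ reduces to a short case analysis at the bottom two rows of $x^\uparrow$.

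First I would show that $M'$ matches $x$ upward if and only if $x$ descends below height $0$ at some point (i.e., $x\notin D_{2n+1,n+1}$), and that $M$ matches $x$ upward if and only if $x\notin D_{2n+1,n+1}^{>0}\cup D_{2n+1,n+1}^{-}$. Combining these with Lemma~\ref{lem:lex}(i), which forces every vertex of the smaller level $n+2$ to have degree $2$ in $\cP$, yields the following degree classification in level $n+1$: vertices of $D^{>0}$ are isolated, vertices of $D^{=0}$ (resp.\ $D^-$) are degree-$1$ endpoints incident only with an $M$-edge (resp.\ only with an $M'$-edge), and all remaining vertices have degree $2$. Consequently, every non-trivial component of $\cP$ is either an even cycle or a path whose two endpoints lie one in $D^{=0}$ and one in $D^-$.

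To rule out cycles and establish the explicit endpoint formula simultaneously, I would fix $x\in D^{=0}$ with canonical decomposition $x=(1,u,0,v)$ and trace the alternating path in $\cP$ starting at $x$. The initial $M$-edge flips the leftmost height-$1$ $\downstep$-step of $x$, which by inspection is precisely the $0$ between $u$ and $v$, yielding the level-$(n+2)$ vertex $(1,u,1,v)$. Iterating the matchings alternately, I would show by induction (on $|u|$, say) that the suffix $v$ is never affected while the prefix $(1,u,0)$ is rearranged to $(u,0,1)$, so that the traversal terminates at $y=(u,0,1,v)$; since $y\in D^-$ is itself a degree-$1$ vertex of $\cP$, this confirms both the endpoint formula and the nonexistence of cycles.

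The hard part is the inductive tracking of the traversal in this last step: the matching characterizations above are immediate consequences of the scan-order definition, but describing how the scan order (and hence the next matched step) evolves through the intermediate bitstrings requires careful bookkeeping, most conveniently organized via a suitable parenthesis-matching invariant akin to the one in Figure~\ref{fig:paren}.
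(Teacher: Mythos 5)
The paper does not prove this lemma at all: immediately above the statement it says \emph{``The following result was shown in~\cite[Proposition~2]{gregor-muetze-nummenpalo:18}''}, so there is nothing in this paper to compare your proposal against. Let me instead assess your proposal on its own terms.

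Your degree analysis (steps 1--2) is correct and yields exactly what is later recorded as Lemma~\ref{lem:FLI}(i): with $M=M_{2n+1,n+1}^{n}$ and $M'=M_{2n+1,n+1}^{n+1}$, the last two down-steps of $x^\uparrow$ in scan order lie in the bottom rows, the two appended steps are the rightmost in the rows at heights $1$ and $0$, and a short case analysis on $a_h$ (the number of down-steps of $x$ starting at height $h$) shows: $x$ is hit by $M'$ iff $x\notin D_{2n+1,n+1}$, and $x$ is hit by $M$ iff $x\notin D_{2n+1,n+1}^{>0}\cup D_{2n+1,n+1}^{-}$. Since level $n+2$ is the smaller side, both matchings saturate it, so $\cP$ is a union of alternating paths and even cycles; the unique degree-$1$ vertices are those in $D^{=0}$ (only the $M$-edge) and $D^-$ (only the $M'$-edge). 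This part is sound.

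The serious gap is that tracing the alternating path from one fixed $x\in D^{=0}$ to a degree-$1$ terminus $y\in D^-$ only shows that \emph{that particular component} is a path. It says nothing about components that contain no degree-$1$ vertex, which is precisely what a cycle is. Your sentence ``this confirms~\dots~the nonexistence of cycles'' therefore does not follow: you must additionally show either (a) that every degree-$\geq 1$ vertex lies on one of the paths emanating from $D^{=0}$, or (b) that a monovariant strictly changes along every alternating walk, so that no walk can close up. A pure edge/vertex count will not save this: using the ballot identity $\lvert D_{2n+1,n+1}\rvert = \binom{2n+1}{n+1}-\binom{2n+1}{n+2}$, the bookkeeping balances identically whether or not cycles are present, so consistency of the count carries no information.

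On the path-tracking itself: the first step is correct (the $n$-th scanned down-step of $x^\uparrow$ for $x=(1,u,0,v)\in D^{=0}$ is the leftmost down-step ending at height~$0$, namely the $0$ between $u$ and $v$), but the subsequent evolution is more intricate than ``the prefix $(1,u,0)$ is rearranged to $(u,0,1)$'' suggests. After two edges you reach $(0,u,1,v)$, which for $u\neq ()$ is a degree-$2$ vertex well outside $D\cup D^-$ (it visits height $-1$ at least twice), and from there the intermediate bitstrings pass through states that dip below $-1$ and are not described by any short closed form. The induction variable cannot simply be $\lvert u\rvert$: the path length depends on the internal shape of $u$, not only on its length, so you would need to induct on something like the number of remaining edges, or set up a stronger loop invariant (e.g.\ a decomposition of the current bitstring into a ``shrinking'' rotating prefix, a fixed marker, and the untouched suffix $v$) and prove it is preserved by one $M$-then-$M'$ step. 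That strengthened invariant is also the natural place to get the cycle-freeness for free, since it pins down the first vertex of every component. As you say yourself, this is the hard part; the proposal as written leaves it open.
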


The first vertices of the paths~$\cP$, denoted by~$F(\cP)$, are the vertices from level~$n+1$ covered by the matching~$M_{2n+1,n+1}^n$ and not by~$M_{2n+1,n+1}^{n+1}$.
Similarly, the last vertices of the paths~$\cP$, denoted by~$L(\cP)$, are covered by the latter matching but not by the former.
We let~$I(\cP)$ denote the set of vertices in level~$n+1$ covered by neither of the two matchings.
We refer to those vertices as \emph{isolated}.

We let $f:=\ol{\rev}$ denote the automorphism of~$Q_{2n+1}$ that flips all bits and reverses them, i.e., $f(x_1,x_2,\dots,x_{2n+1})=(\ol{x_{2n+1}},\dots,\ol{x_2},\ol{x_1})$.
Using the abbreviation~\eqref{eq:def-P} and Lemma~\ref{lem:lex}~(iii), we may rewrite the definition~\eqref{eq:def-C} equivalently as
\begin{equation}
\label{eq:def-C-eq}
  \cC_{2n+1}:=\cP \cup f(\cP) \cup E \enspace.
\end{equation}

For any set~$X$ of bitstrings and bitstrings~$a,b$ we let~$_a{X_b}$ denote the subset of the bitstrings from~$X$ that have the prefix~$a$ and the suffix~$b$.
Furthermore, let $\cP_0$ and~$\cP_1$ be the collections of paths from~$\cP$ with fixed last bit equal to~0 or~1, respectively.
Note that $\cP=\cP_0\cup \cP_1$, as neither the $n$-lexical matching nor the $(n+1)$-lexical matching between levels~$n+1$ and~$n+2$ uses any edges along which the last bit is flipped.
We start with the following observations.

\begin{lemma}
\label{lem:FLI}
For any~$n\geq 1$, the sets of first, last and isolated vertices of the paths~$\cP$ defined in~\eqref{eq:def-P} are given by
\begin{enumerate}[label=(\roman*)]
\item $F(\cP)=D_{2n+1,n+1}^{=0}$\,, \; $L(\cP)=D_{2n+1,n+1}^-$\,, \; $I(\cP)=D_{2n+1,n+1}^{>0}$\,,
\item $F(\cP_0)=D_{2n,n+1}^{=0}\circ 0$\,, \; $F(\cP_1)=D_{2n,n}^{=0}\circ 1$\,.
\end{enumerate}
\end{lemma}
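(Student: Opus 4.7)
The approach is to unfold the definitions of the two lexical matchings~$M_{2n+1,n+1}^n$ and~$M_{2n+1,n+1}^{n+1}$ whose union is~$\cP$, and to track where the two appended down-steps in the extension~$x^\uparrow$ of a vertex $x \in L_{2n+1,n+1}$ land in the row-wise scanning order. Since every $x \in L_{2n+1,n+1}$ ends at height~$+1$, exactly two down-steps are appended to form~$x^\uparrow$, one at height~$1$ and one at height~$0$, and both sit at the far right of the path.

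For part~(i), I would classify~$x$ by the number $a,b,c,d$ of its down-steps starting at heights~$\ge 2$, $=1$, $=0$, and $\le -1$, so $a+b+c+d=n$. Scanning the down-steps of~$x^\uparrow$ top-to-bottom and right-to-left within each row reads off, in order: the $a$ steps of~$x$ at heights~$\ge 2$, then the appended row-$1$ step (rightmost in its row), then the $b$ row-$1$ steps of~$x$, then the appended row-$0$ step, then the $c$ row-$0$ steps of~$x$, and finally the $d$ steps of~$x$ at heights~$\le -1$. Hence the $0$-indexed scanning positions~$n$ and $n+1$, which the matchings $M_{2n+1,n+1}^n$ and $M_{2n+1,n+1}^{n+1}$ respectively try to flip, land as follows: both on appended steps iff $b=c=d=0$; on a row-$1$ step of~$x$ and an appended row-$0$ step iff $b\ge 1$ and $c=d=0$; on an appended row-$0$ step and a row-$0$ step of~$x$ iff $c=1$ and $d=0$; and both on steps of~$x$ in all other cases. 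This yields $x \in I(\cP)$ iff $b=c=d=0$, $x \in F(\cP)$ iff $b\ge 1$ and $c=d=0$, $x \in L(\cP)$ iff $c=1$ and $d=0$, and $x$ is interior to a path of~$\cP$ otherwise. The condition $b=c=d=0$ says $x$ has no down-step at height~$1$ or below, which, since the lattice path of~$x$ starts at height~$0$ and ends at height~$+1$, is equivalent to its staying strictly above the abscissa except at the origin, i.e.~$x \in D_{2n+1,n+1}^{>0}$. The analogous translations identify $F(\cP)$ with $D_{2n+1,n+1}^{=0}$ (the path stays $\ge 0$ and returns to the abscissa at least once) and $L(\cP)$ with $D_{2n+1,n+1}^-$ (the path visits height~$-1$ exactly once), recovering Lemma~\ref{lem:paths} as a by-product.

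For part~(ii), I would partition $F(\cP) = D_{2n+1,n+1}^{=0}$ by the last bit. If $x = y\cdot 0 \in D_{2n+1,n+1}^{=0}$, then $y$ is a prefix of~$x$, so $y$ stays $\ge 0$ and ends at height~$+2$; the required touch of the abscissa by~$x$ cannot occur at position~$2n+1$ (which sits at height~$+1$), so it occurs inside~$y$, giving $y\in D_{2n,n+1}^{=0}$. The converse of appending a~$0$ to any $y\in D_{2n,n+1}^{=0}$ is immediate, establishing $F(\cP_0) = D_{2n,n+1}^{=0}\circ 0$. For $x = y\cdot 1$, the bitstring~$y$ ends at height~$0$ and hence already touches the abscissa at its last position (after the origin), and the analogous argument gives $F(\cP_1) = D_{2n,n}^{=0}\circ 1$. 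The main technical hurdle throughout is the bookkeeping in part~(i), namely correctly interleaving the two appended down-steps with the steps of~$x$ in the scanning order and reading off positions~$n$ and $n+1$ in each subcase of $(b,c,d)$; once this is laid out, both parts collapse to routine checks.
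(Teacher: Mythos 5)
Your approach is essentially the paper's: you analyze where the $n$th and $(n+1)$th down-steps of $x^\uparrow$ fall in the row-wise scanning order, and your part~(ii) argument about the fixed last bit is the same as the paper's. The one difference is that for the $F(\cP)$ and $L(\cP)$ characterizations the paper simply cites the imported Lemma~\ref{lem:paths}, while you re-derive them as two additional cases of the same scanning-order bookkeeping. That unification is clean and self-contained, though your aside about ``recovering Lemma~\ref{lem:paths} as a by-product'' overstates it --- you identify the start/end vertex \emph{sets}, but you do not establish that $\cP$ is acyclic, nor the explicit pairing $(1,u,0,v)\mapsto(u,0,1,v)$ of endpoints along a common path.

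There is one small hole in the $(b,c,d)$ casework that you should patch. When $c+d=1$, you list only the subcase $c=1$, $d=0$ (giving $x\in L(\cP)$) and sweep everything else into ``both positions on steps of $x$''. But the subcase $c=0$, $d=1$ would put position~$n$ on the appended row-$0$ step and position~$n+1$ on a row-$(\le -1)$ step of~$x$, which is neither of your listed configurations. The subcase is of course vacuous: a lattice path starting at the origin can only reach height~$-1$ via a down-step from height~$0$, so $c=0$ forces $d=0$. Without this observation made explicit, your four cases are not exhaustive over all tuples $(a,b,c,d)$ with $a+b+c+d=n$.
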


\begin{proof}
The first two statements in part~(i) are given by Lemma~\ref{lem:paths}.
The statement $I(\cP)=D_{2n+1,n+1}^{>0}$ follows by considering for which $x\in L_{2n+1,n+1}$ the lattice path~$x^{\uparrow}$ (recall the definition from Section~\ref{sec:lex}) has its $n$th and $(n+1)$th $\downstep$-step (in the counting from top to bottom and from right to left in each row, starting from~$0$) in the two $\downstep$-steps that were added to~$x$ at positions~$2n+2$ and~$2n+3$.
One can easily observe that this happens if and only if $x\in D_{2n+1,n+1}^{>0}$.
Part~(ii) follows immediately from the definitions and from part~(i) by considering the lattice paths before the fixed last bit.
\end{proof}

\begin{figure}
\includegraphics[scale=0.916]{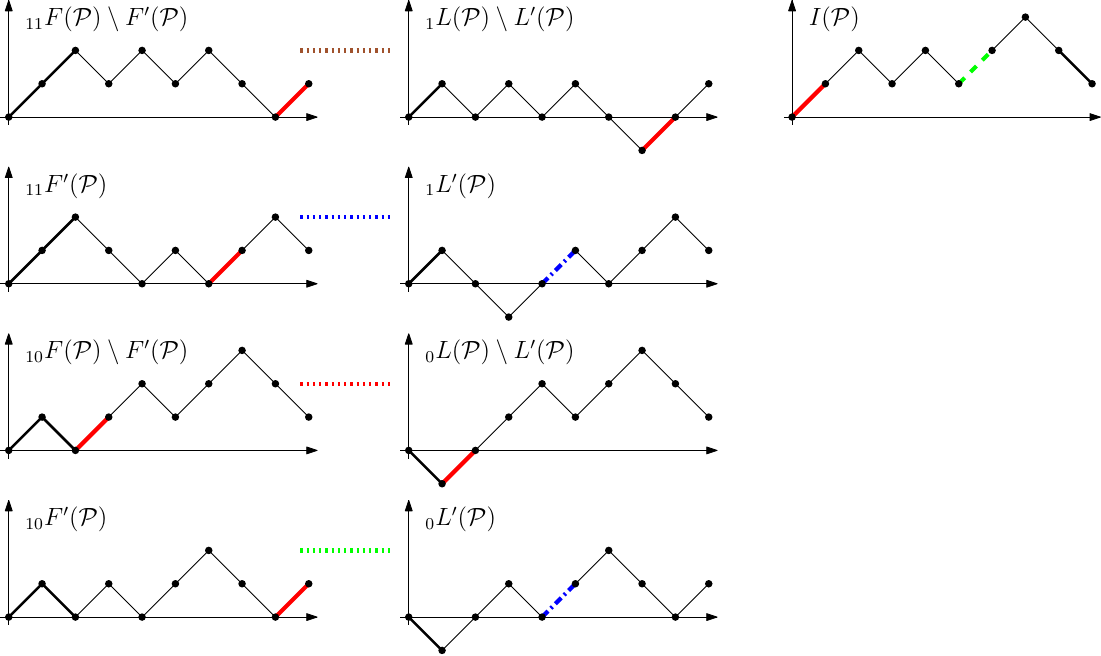}
\caption{Lattice path interpretation of vertex sets in level~$n+1$ of~$Q_{2n+1}$ involved in the construction.
Heavier (black) steps denote fixed bits~0 or~1.
Fat steps are flipped along the edges of~$E$: solid (red) by~$E^n$, dashed (green) by~$E^{n-1}$, dash-dotted (blue) by~$E^{n-2}$.}
\label{fig:dyck-paths}
\end{figure}

We define $F'(\cP) \seq F(\cP)$ as the set of vertices $x \in F(\cP)=D_{2n+1,n+1}^{=0}$ such that $x=(1,u,0,v)$ with some~$u\in D$ and~$v\in D^{=0}$ (otherwise $v\in D^{>0}$).
Similarly, $L'(\cP)\seq L(\cP)$ is the set of all vertices $y \in L(\cP)=D_{2n+1,n+1}^-$ such that $y=(u,0,1,v)$ with some~$u\in D$ and $v\in D^{=0}$.
Figure~\ref{fig:dyck-paths} illustrates the lattice paths corresponding to the different subsets of vertices given by Lemma~\ref{lem:FLI}, refined according to $F'(\cP)$ and $L'(\cP)$ and by fixing the first one or two bits, and which pairs of first and last vertices are joined along paths from~$\cP$ as given by Lemma~\ref{lem:paths}.

We now define the set~$E$ of edges for the cycle factor~$\cC_{2n+1}$ between levels~$n$ and~$n+1$ of~$Q_{2n+1}$ so that each vertex of~$F(\cP)$ and~$L(\cP)$ will be incident with exactly one edge of~$E$, each vertex of~$I(P)$ will be incident with exactly two edges of~$E$, and all other vertices in level~$n+1$ are not incident with any edges from~$E$; see Figure~\ref{fig:4levels}.
Effectively, adding the edges from~$E$ makes all degrees in the subgraph~$\cC_{2n+1}$ in level~$n+1$ equal to two.
We then show that adding the edges from~$E$ also makes all degrees in level~$n$ equal to two, so that $\cC_{2n+1}$ is indeed a cycle factor.

For a set of edges~$M$ and a set of vertices~$X$, we let $M[X]$ denote the set of edges from~$M$ incident with~$X$.
We then define
\begin{subequations}
\label{eq:def-E}
\begin{equation}
  E:=E^n\cup E^{n-1} \cup E^{n-2} \enspace,
\end{equation}
where
\begin{align}
\label{eq:def-En}
  E^n &:= M_{2n+1,n}^{n}[F(\cP) \cup \big(L(\cP) \setminus L'(\cP)\big)\cup  I(\cP)] \enspace, \\
  E^{n-1} &:= M_{2n+1,n}^{n-1}[I(\cP)] \enspace, \\
  E^{n-2} &:= M_{2n+1,n}^{n-2}[L'(\cP)] \enspace.
\end{align}
\end{subequations}

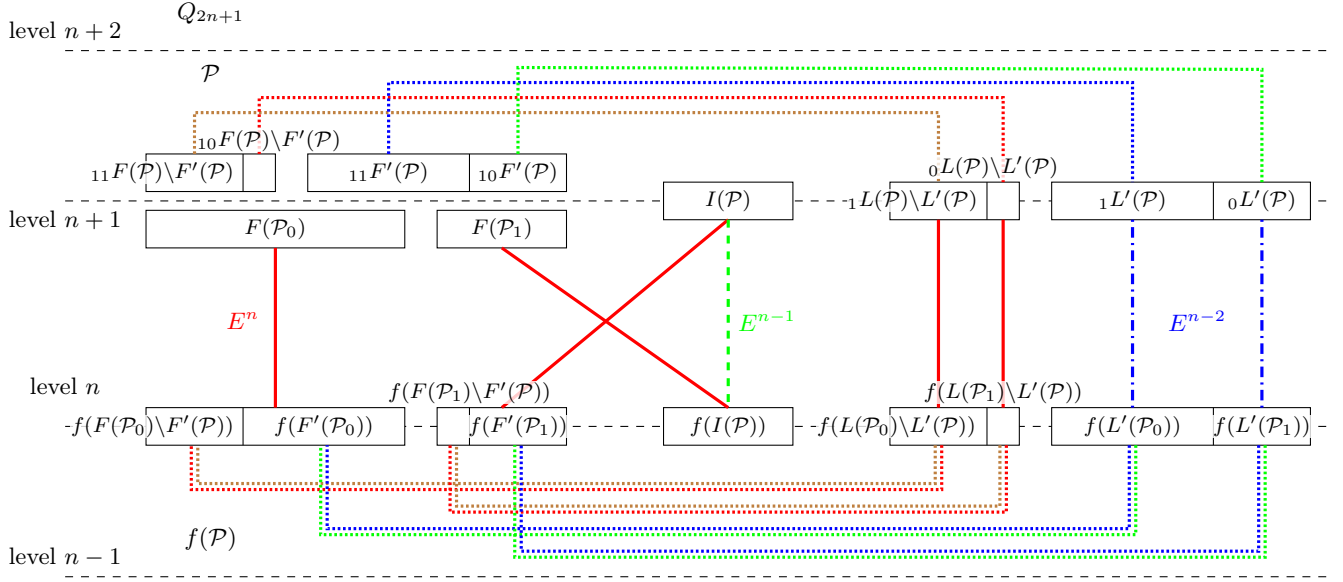
\begin{figure}
\makebox[0cm]{ % artificial box to center the picture
\begin{tikzpicture}[every node/.style={font=\scriptsize},xscale=0.43]

% levels
\draw[dashed] (-2.5,5.5) node[above] {\footnotesize level $n+2$} -- (36.5,5.5);
\draw[dashed] (-2.5,3.5) node[below] {\footnotesize level $n+1$} -- (36.5,3.5);
\draw[dashed] (-2.5,0.5) node[above=3mm] {\footnotesize level $n$} -- (36.5,0.5);
\draw[dashed] (-2.5,-1.5) node[above] {\footnotesize level $n-1$} -- (36.5,-1.5);

% boxes
\draw[fill=white] (0,0.25) rectangle (8,0.75);
\draw[fill=white] (0,2.875) rectangle (8,3.375);
\draw[fill=white] (0,3.625) rectangle (4,4.125);
\draw[fill=white] (9,0.25) rectangle (13,0.75);
\draw[fill=white] (9,2.875) rectangle (13,3.375);
\draw[fill=white] (5,3.625) rectangle (13,4.125);
\draw[fill=white] (16,0.25) rectangle (20,0.75);
\draw[fill=white] (16,3.25) rectangle (20,3.75);
\draw[fill=white] (23,0.25) rectangle (27,0.75);
\draw[fill=white] (23,3.25) rectangle (27,3.75);
\draw[fill=white] (28,0.25) rectangle (36,0.75);
\draw[fill=white] (28,3.25) rectangle (36,3.75);
\foreach \i in {26,33}
 \draw (\i,0.25) -- (\i,0.75) (\i,3.25) -- (\i,3.75);
\draw (3,3.625) -- (3,4.125);
\draw (10,3.625) -- (10,4.125);
\foreach \i in {3,10}
 \draw (\i,0.25) -- (\i,0.75);

% paths between lower two levels
\draw[brown,very thick,dotdot] (24.4,0.25) -- (24.4,-0.26) -- (1.6,-0.26) -- (1.6,0.25);
\draw[red,very thick,dotdot] (24.6,0.25) -- (24.6,-0.34) -- (1.4,-0.34) -- (1.4,0.25);
\draw[brown,very thick,dotdot] (26.4,0.25) -- (26.4,-0.56) -- (9.6,-0.56) -- (9.6,0.25);
\draw[red,very thick,dotdot] (26.6,0.25) -- (26.6,-0.64) -- (9.4,-0.64) -- (9.4,0.25);
\draw[blue,very thick,dotdot] (30.4,0.25) -- (30.4,-0.86) -- (5.6,-0.86) -- (5.6,0.25);
\draw[green,very thick,dotdot] (30.6,0.25) -- (30.6,-0.94) -- (5.4,-0.94) -- (5.4,0.25);
\draw[blue,very thick,dotdot] (34.4,0.25) -- (34.4,-1.16) -- (11.6,-1.16) -- (11.6,0.25);
\draw[green,very thick,dotdot] (34.6,0.25) -- (34.6,-1.24) -- (11.4,-1.24) -- (11.4,0.25);

% paths between upper two levels
\draw[brown,very thick,dotdot] (1.5,4.125) -- (1.5,4.675) -- (24.5,4.675) -- (24.5,3.75);
\draw[red,very thick,dotdot] (3.5,4.125) -- (3.5,4.875) -- (26.5,4.875) -- (26.5,3.75);
\draw[blue,very thick,dotdot] (7.5,4.125) -- (7.5,5.075) -- (30.5,5.075) -- (30.5,3.75);
\draw[green,very thick,dotdot] (11.5,4.125) -- (11.5,5.275) -- (34.5,5.25) -- (34.5,3.75);

% matching edges between middle two levels
\draw[very thick,red] (4,0.75) -- (4,2.875);
\draw[very thick,red] (11,0.75) -- (18,3.25);
\draw[very thick,red] (18,0.75) -- (11,2.875);
\draw[very thick,red] (24.5,3.25) -- (24.5,0.75);
\draw[very thick,red] (26.5,3.25) -- (26.5,0.75);

\draw[green,very thick,style=dashed] (18,3.25) -- (18,0.75);

\draw[blue,very thick,style=dashdot] (30.5,3.25) -- (30.5,0.75);
\draw[blue,very thick,style=dashdot] (34.5,3.25) -- (34.5,0.75);

% box labels bottom
\node[fill=white,fill opacity=0.8,text opacity=1,inner sep=0.2pt] at (0.2,0.5) {$f(F(\cP_0) {\setminus} F'(\cP))$};
\node (1fF') at (5.5,0.5) {$f(F'(\cP_0))$};
\node (0fF-) at (9.5,0.5) {};
\node[fill=white,fill opacity=0.8,text opacity=1,inner sep=0.2pt] at (10,0.95) {$f(F(\cP_1){\setminus}F'(\cP))$};
\node[fill=white,fill opacity=0.8,text opacity=1,inner sep=0.2pt] at (11.5,0.5) {$f(F'(\cP_1))$};

\node[fill=white,fill opacity=0.8,text opacity=1,inner sep=0.2pt] at (23.2,0.5) {$f(L(\cP_0){\setminus} L'(\cP))$};
\node[fill=white,fill opacity=0.8,text opacity=1,inner sep=0.2pt] at (26.5,0.95) {$f(L(\cP_1){\setminus} L'(\cP))$};
\node at (30.5,0.5) {$f(L'(\cP_0))$};
\node[fill=white,fill opacity=0.8,text opacity=1,inner sep=0.2pt] at (34.5,0.5) {$f(L'(\cP_1))$};

% box labels top
\node[fill=white,fill opacity=0.8,text opacity=1,inner sep=0.2pt] at (0.5,3.875) {${}_{11}F(\cP) {\setminus} F'(\cP)$};
\node[fill=white,fill opacity=0.8,text opacity=1,inner sep=0.2pt] at (3.8,4.3) {${}_{10}F(\cP) {\setminus} F'(\cP)$};
\node at (7.5,3.875) {${}_{11}F'(\cP)$};
\node at (11.5,3.875) {${}_{10}F'(\cP)$};

\node[fill=white,fill opacity=0.8,text opacity=1,inner sep=0.2pt] at (23.7,3.5) {${}_1L(\cP) {\setminus} L'(\cP)$};
\node[fill=white,fill opacity=0.8,text opacity=1,inner sep=0.2pt] at (26.2,3.95) {${}_0L(\cP) {\setminus} L'(\cP)$};
\node at (30.5,3.5) {${}_1L'(\cP)$};
\node at (34.5,3.5) {${}_0 L'(\cP)$};

% other labels
\node at (4,3.125) {$F(\cP_0)$};
\node at (11,3.125) {$F(\cP_1)$};
 
\node at (18,3.5) {$I(\cP)$};
\node at (18,0.5) {$f(I(\cP))$};

\node at (2,6) {\footnotesize $Q_{2n+1}$};
\node at (2,5.2) {\footnotesize $\cP$};
\node at (2,-1) {\footnotesize $f(\cP)$};

\node[red] at (3,1.90625) {\footnotesize $E^{n}$};
\node[green] at (19.2,1.90625) {\footnotesize $E^{n-1}$};
\node[blue] at (32.5,1.90625) {\footnotesize $E^{n-2}$};  
  
\end{tikzpicture}
}
\caption{Vertex sets involved in constructing the cycle factor~$\cC_{2n+1}$ in the middle four levels of~$Q_{2n+1}$.
The relevant vertex sets are drawn by rectangular boxes, where the width of the boxes represents the size of the set, drawn to scale for large values of~$n$.
The vertex sets and their images under the automorphism $f=\ol{\rev}$ are aligned vertically.
The paths $\cP=M_{2n+1,n+1}^n\cup M_{2n+1,n+1}^{n+1}$ between levels~$n+1$ and~$n+2$ and $f(\cP)$ between levels~$n-1$ and~$n$ connecting various pairs of end vertices from the vertex sets are indicated by dotted lines.
The intermediate vertices of these paths are not shown in the figure.
The additional edges between levels~$n$ and~$n+1$ from the sets $E^i\subseteq M_{2n+1,n}^i$, $i=n,n-1,n-2$, connecting those paths to a cycle factor are drawn by solid (red), dashed (green) and dash-dotted (blue) lines, respectively.
}
\label{fig:4levels}
\end{figure}

To prove that~$\cC_{2n+1}$ as defined in~\eqref{eq:def-C-eq} is indeed a cycle factor, we now consider the sets of vertices in level~$n$ that are covered by the edges from~$E$; see Figure~\ref{fig:4levels}.

\begin{lemma}
\label{lem:E-match}
For any~$n\geq 1$, the edges from~$E$ defined in~\eqref{eq:def-E} match the following sets of vertices in levels~$n+1$ and~$n$ of~$Q_{2n+1}$.
\begin{enumerate}[label=(\roman*)]
\item The sets~$F(\cP_0)$ and $f(F(\cP_0))$, $F(\cP_1)$ and $f(I(\cP))$, $I(\cP)$ and $f(F(\cP_1))$ are matched by edges from~$E^n$.
\item The sets~$I(\cP)$ and~$f(I(\cP))$ are matched by edges from~$E^{n-1}$.
\item The sets $L(\cP) \setminus L'(\cP)$ and $f(L(\cP) \setminus L'(\cP))$ are matched by edges from~$E^n$, and the sets~$L'(\cP)$ and~$f(L'(\cP))$ are matched by edges from~$E^{n-2}$.
\end{enumerate}
\end{lemma}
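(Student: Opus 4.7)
My plan is to verify the six matchings in~(i)--(iii) by case analysis on the lattice path of a source vertex $x \in L_{2n+1,n+1}$, leveraging the $f$-symmetry of the middle matchings. By Lemma~\ref{lem:lex}(iii) with $k=n$ (hence $l=n$ and $n-k-1=n$), we have $\ol{\rev}(M^i_{2n+1,n}) = M^i_{2n+1,n}$, so that $xy \in M^i_{2n+1,n}$ implies $f(x)f(y) \in M^i_{2n+1,n}$. Consequently, to show that $M^i$ matches $X \seq L_{2n+1,n+1}$ onto $f(Y)$ it is enough to verify that $f(M^i(x)) \in Y$ for every $x \in X$.

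I would then make $M^i_{2n+1,n}$ explicit on level $n+1$. Since every $x \in L_{2n+1,n+1}$ has lattice path ending at height $+1$, one has $x^\downarrow = x$, and the scan orders the $n+1$ $\upstep$-steps of $x$ by starting height (top to bottom), then by position (left to right) at a common starting height. Writing $m$ for the minimum height of~$x$, the matching $M^n$ flips the rightmost $\upstep$-step at height~$m$, while $M^{n-1}$ and $M^{n-2}$ flip the next two steps in this scan. If the flipped position is $p+1$, then $y := M^i(x)$ has heights $h^y_j = h^x_j$ for $j \leq p$ and $h^y_j = h^x_j - 2$ for $j > p$, while the identity $h^{f(y)}_j = h^y_{2n+1-j} + 1$ reads off the path of $f(y)$ from that of $y$. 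Lemmas~\ref{lem:paths} and~\ref{lem:FLI} then provide the dictionary translating membership in $F(\cP_0)$, $F(\cP_1)$, $I(\cP)$, $L(\cP)$ and $L'(\cP)$ into Dyck-type lattice-path conditions.

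The six verifications then proceed by direct case work. For part~(i), $x \in F(\cP) = D_{2n+1,n+1}^{=0}$ has $m = 0$ with the rightmost $\upstep$-step at height~$0$ located at position $p+1$, where $p \geq 2$ is the last position with $h^x_p = 0$; splitting on the last bit of $x$ places $y = M^n(x)$ so that $f(y) \in F(\cP_0)$ when $x \in F(\cP_0)$ and $f(y) \in I(\cP)$ when $x \in F(\cP_1)$, whereas $x \in I(\cP) = D_{2n+1,n+1}^{>0}$ has its unique height-$0$ $\upstep$-step at position~$1$ and is sent by $f \circ M^n$ into $F(\cP_1)$, completing the cyclic pattern. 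Part~(ii) is parallel: for $x \in I(\cP)$ the $M^{n-1}$-flip picks the rightmost $\upstep$-step at height~$1$, and a short computation confirms $f(y) \in I(\cP)$. For part~(iii), the canonical decomposition $x = (u, 0, 1, v)$ with $u \in D$ for $x \in L(\cP) = D_{2n+1,n+1}^-$ gives min height $-1$ attained only at position $|u|+1$, so the $M^n$-flip always acts at position $|u|+2$; verifying $f(y) \in f(L(\cP) \setminus L'(\cP))$ when $v \in D^{>0}$ is direct, while for $v \in D^{=0}$ the $M^{n-2}$-target is identified among the $\upstep$-steps at height~$0$ inside the suffix $v$, and the corresponding verification that $f(y) \in f(L'(\cP))$ follows analogously.

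The main obstacle is the combined bookkeeping in part~(i), where the partition of $F(\cP) \cup I(\cP)$ into the three subclasses $F(\cP_0)$, $F(\cP_1)$, $I(\cP)$ must be matched to three distinct $f$-images in the correct cyclic manner; this forces us to track simultaneously the last bit of~$x$ and whether the minimum height~$0$ is attained only at the origin, and to keep this consistent with the antipodal action $f = \ol{\rev}$. A secondary challenge arises in~(iii), where the location of the $M^{n-2}$-target within the suffix $v \in D^{=0}$ must be pinned down precisely so that, after applying $f$, the image lies in $L'(\cP)$ rather than in $L(\cP) \setminus L'(\cP)$.
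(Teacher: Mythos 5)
Your proposal is correct and takes essentially the same route as the paper: reduce to an injectivity check via the $f$-invariance $\ol{\rev}(M^i_{2n+1,n})=M^i_{2n+1,n}$ from Lemma~\ref{lem:lex}~(iii), then pin down which $\upstep$-step $M^n$, $M^{n-1}$, $M^{n-2}$ flip on each of $F(\cP)$, $I(\cP)$, $L(\cP)$ using the lattice-path scan, and verify the $f$-image case by case with Lemma~\ref{lem:FLI} (your explicit treatment of the scan order and the height identity $h^{f(y)}_j = h^y_{2n+1-j}+1$ is a slightly more formal version of the paper's sketch). One notational slip in part~(iii): you write ``$f(y)\in f(L(\cP)\setminus L'(\cP))$'' and ``$f(y)\in f(L'(\cP))$'', but by your own reduction these should be $f(y)\in L(\cP)\setminus L'(\cP)$ and $f(y)\in L'(\cP)$ (equivalently $y\in f(\cdot)$), since $f(y)$ lies in level $n+1$ while $f(L(\cP)\setminus L'(\cP))$ lies in level $n$.
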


\begin{proof}
It suffices to show that in each of the cases, the edges from~$E$ join a vertex from one of the sets~$X$ in level~$n+1$ to a vertex from the corresponding set~$Y$ in level~$n$ (i.e., we show that these edges form an injection from~$X$ to~$Y$).
The fact that they form a surjection follows by applying the same argument to~$f(\cC_{2n+1})$, using that $f$ is an involution and that $f(M_{2n+1,n}^i)=M_{2n+1,n}^i$ for $i=n,n-1,n-2$ by Lemma~\ref{lem:lex}~(iii).

To prove the first two statements in (i), consider a vertex $x\in F(\cP)=D_{2n+1,n+1}^{=0}$ in level~$n+1$ and the corresponding lattice path (recall Lemma~\ref{lem:FLI}~(i)).
The edge of~$E_n$ flips the last $\upstep$-step starting at the abscissa, i.e., this edge joins $x=(u,1,v)$ where $u,v\in D$ are uniquely determined and $u \neq ()$ with the vertex $y=(u,0,v)=f(f(v),1,f(u))$.
If $x\in F(\cP_0)$, i.e., $v\neq ()$, then we have $y\in f(F(\cP_0))$ as $(f(v),1,f(u))\in D_{2n,n+1}^{=0}\circ 0$ (recall Lemma~\ref{lem:FLI}~(ii)).
If $x\in F(\cP_1)$, i.e., $v=()$, then we have $y\in f(I(\cP))$ as $(1,f(u))\in D_{2n+1,n+1}^{>0}$ (recall Lemma~\ref{lem:FLI}~(i)).
The remaining claims can be shown by similar calculations with the help of Lemma~\ref{lem:FLI}.
We omit the details.
\end{proof}

Lemma~\ref{lem:E-match} allows us to conclude that~$\cC_{2n+1}$ as defined in~\eqref{eq:def-C-eq} with the edge set~$E$ defined in~\eqref{eq:def-E} is indeed a cycle factor in the subgraph of~$Q_{2n+1}$ induced by the middle four levels, as every vertex in the four levels is covered by exactly two edges.

\subsection{Structure of the cycle factor~\texorpdfstring{$\cC_{2n+1}$}{C2n+1}}

We now analyze the structure of the cycle factor~$\cC_{2n+1}$ defined in~\eqref{eq:def-C-eq}.
Observe from Figure~\ref{fig:4levels} that on each cycle of~$\cC_{2n+1}$ all paths from~$\cP$ are visited in the same orientation from the first vertices~$F(\cP)$ to the corresponding last vertices from~$L(\cP)$.
The following lemma shows for a given path from~$\cP$ on a cycle~$C$ of~$\cC_{2n+1}$ which path from~$\cP$ is encountered next on the cycle~$C$.
To state the lemma we introduce a bit of notation.

It is convenient here to identify the paths from~$\cP$ by their first vertices, so by Lemma~\ref{lem:paths} this is the set $D_{2n+1,n+1}^{=0}$.
By appending an additional 0-bit to the bitstrings $D_{2n+1,n+1}^{=0}$, we obtain Dyck paths of length~$2n+2$ with exactly $n+1$~upsteps and $n+1$~downsteps that touch the abscissa at least three times (in the origin~$(0,0)$, at~$(2n+2,0)$ and at some intermediate point).
It turns out that the structure of the cycle factor~$\cC_{2n+1}$ can be described most conveniently by interpreting the Dyck paths $D_{2n+1,n+1}^{=0}\circ 0$ as rooted trees as described in Section~\ref{sec:dyck} and illustrated in Figure~\ref{fig:tree}.
We introduce the abbreviation $\cT_{n+1}:=D_{2n+1,n+1}^{=0}\circ 0$ for these trees.
Note that they have exactly $n+1$~edges and the root has degree at least two.

\begin{figure}
\includegraphics[scale=0.916]{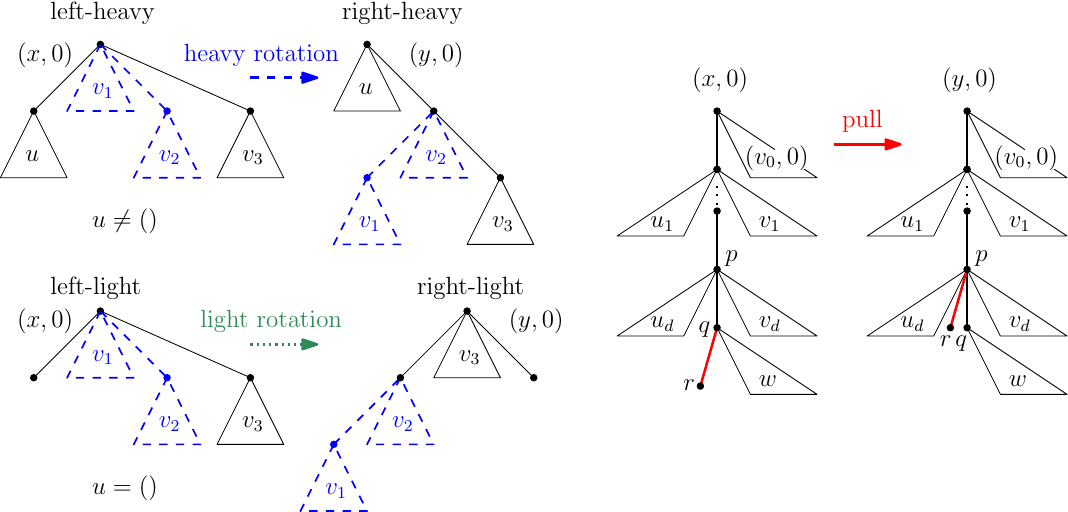}
\caption{Heavy and light tree rotations (left), and the pull operation (right).}
\label{fig:tree-ops}
\end{figure}

The following definitions are illustrated in Figure~\ref{fig:tree-ops}.
We say that a tree from~$\cT_{n+1}$ is \emph{left-light} if the leftmost child of the root is a leaf; otherwise, it is \emph{left-heavy}.
Analogously, we define \emph{right-light} or \emph{right-heavy} trees by considering the rightmost child of the root.
A \emph{left-rotation} of a tree moves the root to the leftmost child of the root.
In terms of bitstrings, this operation changes $(1,u,0,v)$ with $u,v\in D$ to $(u,1,v,0)$.
A \emph{right-rotation} is the inverse operation of a left-rotation.
Given a left-heavy tree $x=(1,u,0,s,1,v_3,0)$, $u,v_3\in D$, where $s=()$ or $s=(v_1,1,v_2,0)$ with $v_1,v_2\in D$, a \emph{heavy rotation} is a left-rotation of the tree plus a right-rotation of the subtree~$s$, i.e., the resulting tree~$y$ is given by $y=(u,1,1,v_3,0,0)$ if $s=()$ and $y=(u,1,1,v_1,0,v_2,1,v_3,0,0)$ otherwise.
Note that the resulting tree~$y$ is right-heavy.
Given a left-light tree $x=(1,0,s,1,v_3,0)$, $v_3\in D$, where $s=()$ or $s=(v_1,1,v_2,0)$ with $v_1,v_2\in D$, a \emph{light rotation} is a right-rotation of the tree plus a right-rotation of the (possibly empty) subtree~$s$ plus detaching the pending edge that leads to the leftmost child of the root of~$x$ and reattaching it as a rightmost child of the new root, i.e., the resulting tree~$y$ is given by $y=(1,0,v_3,1,0)$ if $s=()$ and $y=(1,1,v_1,0,v_2,0,v_3,1,0)$ otherwise.
Note that the resulting tree~$y$ is right-light.
To any given tree from~$\cT_{n+1}$, we can either apply a heavy or a light rotation, depending on whether the tree is left-heavy or left-light, respectively.
We refer to this mapping on~$\cT_{n+1}$ as~$\rho$.
Analogously, $\rho^{-1}$ applies either an inverse heavy or an inverse light rotation depending on whether the tree is right-heavy or right-light, respectively.

The following lemma asserts that the sequence of rooted trees corresponding to first vertices of paths from~$\cP$ that are encountered when following a cycle from our factor~$\cC_{2n+1}$ corresponds to repeatedly applying~$\rho$, i.e., either applying a heavy rotation or a light rotation.
In other words, the cycles of~$\cC_{2n+1}$ are in bijection with equivalence classes of rooted trees from~$\cT_{n+1}$ when iterating the mapping~$\rho$.

\begin{lemma}
\label{lem:tree-rot}
For any~$n\geq 1$, any cycle~$C$ of the cycle factor~$\cC_{2n+1}$ defined in~\eqref{eq:def-C-eq}, and any vertex $x\in F(\cP)=D_{2n+1,n+1}^{=0}$ let $y$ be the next vertex from~$F(\cP)$ on the cycle~$C$ encountered after~$x$.
Let $x=(1,u,0,v)$, $u\in D$, be the canonical decomposition of~$x$.
If $u\neq ()$, i.e., $x\in {}_{11}F(\cP)$, then we have
\begin{equation*}
  y=\begin{cases}
  (u,1,1,v_3,0) & \text{if } v=(1,v_3) \text{ with } v_3\in D\text{, i.e., } x \in {}_{11}F(\cP) \setminus F'(\cP) \enspace, \\
  (u,1,1,v_1,0,v_2,1,v_3,0) & \text{if } v=(v_1,1,v_2,0,1,v_3) \text{ with } v_1,v_2,v_3\in D\text{, i.e., } x \in {}_{11}F'(\cP) \enspace.
  \end{cases}
\end{equation*}
In terms of rooted trees, $(y,0)\in \cT_{n+1}$ is obtained from $(x,0)\in \cT_{n+1}$ by a heavy rotation.

If $u = ()$, i.e., $x \in {}_{10}F(\cP)$, then we have
\begin{equation*}
  y=\begin{cases}
  (1,0,v_3,1) & \text{if } v=(1,v_3) \text{ with } v_3\in D\text{, i.e., } x \in {}_{10}F(\cP) \setminus F'(\cP) \enspace, \\
  (1,1,v_1,0,v_2,0,v_3,1) & \text{if } v=(v_1,1,v_2,0,1,v_3) \text{ with } v_1,v_2,v_3\in D\text{, i.e., } x \in {}_{10}F'(\cP) \enspace.
  \end{cases}
\end{equation*}
In terms of rooted trees, $(y,0)\in \cT_{n+1}$ is obtained from $(x,0)\in \cT_{n+1}$ by a light rotation.
\end{lemma}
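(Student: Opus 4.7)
The plan is to verify the lemma by a case analysis that traces the cycle of $\cC_{2n+1}$ starting at $x$. By~\eqref{eq:def-C-eq} and Lemma~\ref{lem:E-match}, the cycle alternates between three kinds of segments: a $\cP$-path between levels~$n+1$ and~$n+2$; an edge of $E$ between levels~$n$ and~$n+1$; and an $f(\cP)$-path between levels~$n-1$ and~$n$. The trace from $x \in F(\cP)$ to the next vertex $y \in F(\cP)$ consists of four such edges, except when it passes through a vertex of $I(\cP)$ (which happens precisely when the last bit of $x$ equals~$1$), in which case it additionally traverses an $E^{n-1}$-edge and an $E^n$-edge via $f(I(\cP))$ to reach $y \in F(\cP_1)$.

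For each of the four cases distinguished by whether $u = ()$ and whether $x \in F'(\cP)$, I would carry out the trace as follows. First, Lemma~\ref{lem:paths} gives the $\cP$-path endpoint $y_1 = (u, 0, 1, v)$. A short inspection of the sub-word~$v$ shows that $y_1 \in L(\cP) \setminus L'(\cP)$ if and only if $x \in F(\cP) \setminus F'(\cP)$, so the subsequent $E$-edge belongs to $E^n$ or $E^{n-2}$ accordingly. The bit flipped by the relevant lexical matching is identified by applying the scanning rule from Section~\ref{sec:lex} to the lattice path of $y_1$, yielding $z_1 \in f(L(\cP))$ in closed form. Next, Lemma~\ref{lem:paths} applied to the $\cP$-path containing $f(z_1)$ gives the other endpoint $z_2 \in f(F(\cP))$ of the $f(\cP)$-path through $z_1$, and a second lexical-matching calculation produces the next-level vertex $w$. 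If $w \in F(\cP)$ then $y := w$; otherwise two additional lexical-matching edges, namely the $E^{n-1}$-edge from $w \in I(\cP)$ to $f(I(\cP))$ and the $E^n$-edge from there to $F(\cP_1)$, lead to the next first vertex $y \in F(\cP_1)$.

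Once $y$ is explicitly identified, direct comparison with the lemma's formulas completes the verification in each case. The identification of $y$ with $\rho(x)$ under the Catalan bijection of Section~\ref{sec:dyck} is then straightforward: when $u \neq ()$, the formula for $y$ corresponds to a left-rotation of the tree $(x, 0) \in \cT_{n+1}$ combined with a right-rotation of the detached subtree~$s$, which is exactly the heavy rotation of Figure~\ref{fig:tree-ops}; the $u = ()$ cases analogously realize the light rotation, with the pending leftmost leaf of $(x,0)$ being reattached as the new rightmost leaf.

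The main obstacle is the explicit computation of the (up to four) lexical matching edges traversed in each case. These calculations are routine but require careful bookkeeping of the scanning rule on lattice paths that have several ``valleys'' near the abscissa; in particular, the $F'(\cP)$-cases involve the sub-paths $v_1, v_2, v_3$ that create additional excursions in $y_1$, and the bit flipped by $M^{n-2}_{2n+1, n}$ must be located among these excursions with care.
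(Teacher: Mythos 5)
Your approach is the same as the paper's: trace the cycle of $\cC_{2n+1}$ segment by segment from $x$ to the next first vertex $y$, using Lemma~\ref{lem:paths} for the $\cP$- and $f(\cP)$-paths, Lemma~\ref{lem:E-match} for the $E$-edges, and the lexical-matching scanning rule to compute flipped bits explicitly, with a final identification of the resulting formula for $y$ with the heavy or light rotation. Your case split (by $u = ()$ and by $x \in F'(\cP)$) is also the right one.

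However, one of your structural claims is wrong: you assert that the trace passes through a vertex of $I(\cP)$ ``precisely when the last bit of $x$ equals $1$''. That is not the correct criterion. Whether the trace from $x$ to $y$ passes through $I(\cP)$ is governed by whether $u = ()$ in the canonical decomposition $x=(1,u,0,v)$, equivalently whether $x \in {}_{10}F(\cP)$ (first two bits $10$), \emph{not} by the last bit of $x$. As a concrete counterexample with $n = 2$: the vertex $x = 10110 \in F(\cP)$ has $u = ()$ and last bit $0$, yet the trace from $x$ does visit $I(\cP)$ (at the vertex $11100$) on its way to $y = 10101$; conversely $x = 11001$ has $u = (1,0) \neq ()$ and last bit $1$, yet its trace to $y = 10110$ does not visit $I(\cP)$. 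What is true is that the trace visits $I(\cP)$ iff the last bit of the \emph{target} $y$ equals $1$, i.e., iff $y \in F(\cP_1)$, which as the lemma's formulas show is exactly the case $u = ()$. You appear to have swapped the roles of $x$ and $y$ here (or equivalently, conflated ``the trace starting at $x$'' with ``the trace ending at $x$''). In the end this does not derail your plan, because the step-by-step procedure you describe (compute $w$, test whether $w \in F(\cP)$, and only in the negative case traverse the extra $E^{n-1}$ and $E^n$ edges through $I(\cP)$ and $f(I(\cP))$) is correct independently of the erroneous side remark, and the calculations would expose the mistake. But as stated it is a genuine error in your understanding of the cycle structure and should be corrected before the calculations are carried out.
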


\begin{figure}[b!]
\makebox[0cm]{ % artificial box to center the picture
\includegraphics[scale=0.916]{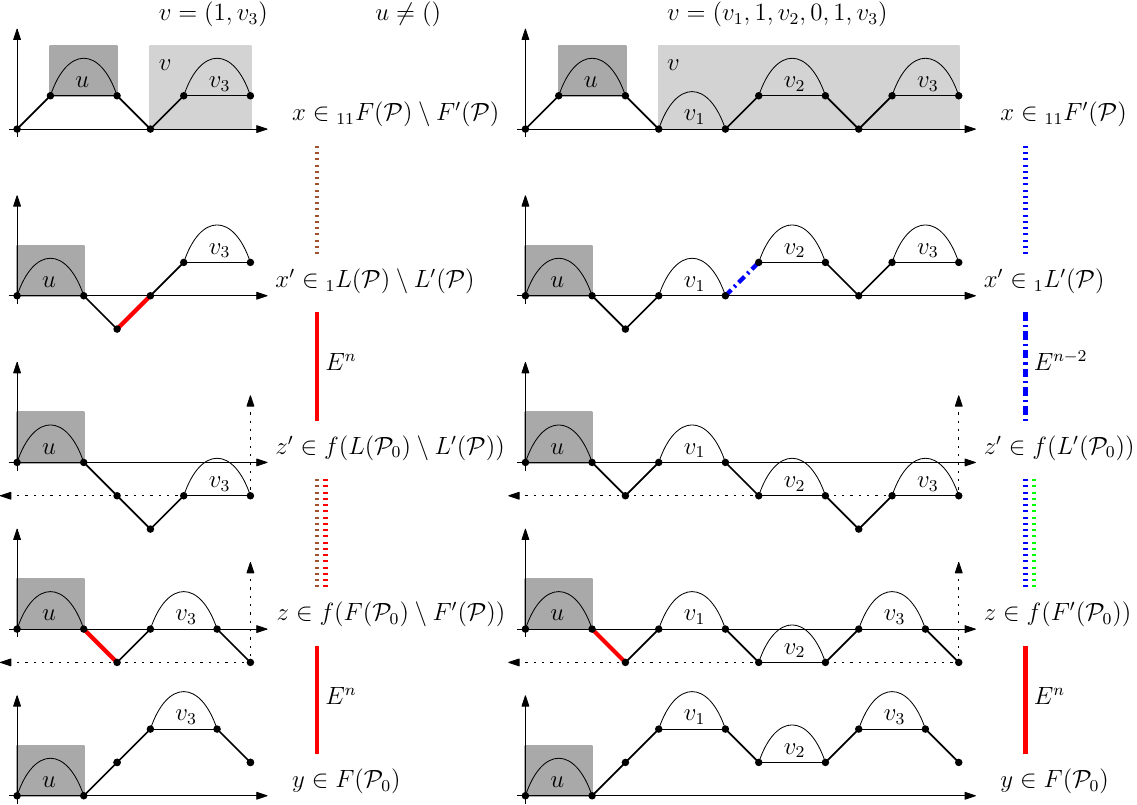}
}
\caption{Proof of the first part of Lemma~\ref{lem:tree-rot}.
The two columns represent the subpaths from~$x$ to~$y$ on a cycle from our factor for the two cases of~$v$.
Applying $f=\ol{\rev}$ can be interpreted as reading the lattice path backwards, and this is indicated by the dotted coordinate systems.
}
\label{fig:cycle-heavy}
\end{figure}

\begin{proof}
We follow the cycle~$C$ in Figure~\ref{fig:4levels} from some vertex $x\in F(\cP)$ until we encounter the next vertex $y\in F(\cP)$.
The case when $u\neq ()$ is shown in Figure~\ref{fig:cycle-heavy}.
In this case, following the path of~$\cP$ from $x\in{} _{11}{F(\cP)}$ leads to the vertex $x':=(u,0,1,v) \in{} _1L(\cP)$ by Lemma~\ref{lem:paths}.
Then, depending on whether $x'\in L(\cP)\setminus L'(\cP)$ or $x'\in L'(\cP)$, we continue along the cycle via an edge from~$E^n$ or~$E^{n-2}$, respectively, from level~$n+1$ to level~$n$.
This corresponds to the two subcases distinguished by~$v$ and shown in the left and right column in Figure~\ref{fig:cycle-heavy}.
By Lemma~\ref{lem:E-match}, we get to a vertex $z'\in f(L(\cP_0))$ that is last on some path of~$f(\cP_0)$.
By following this path backwards using Lemma~\ref{lem:paths} we get to the corresponding first vertex $z\in f(F(\cP_0))$.
Using again Lemma~\ref{lem:E-match}, we then traverse an edge from~$E^n$ to go from from level~$n$ to level~$n+1$ where we encounter the vertex $y\in F(\cP_0)$.
Observe in Figure~\ref{fig:cycle-heavy} that $y$ has exactly the claimed form.

\begin{figure}
\includegraphics[scale=0.916]{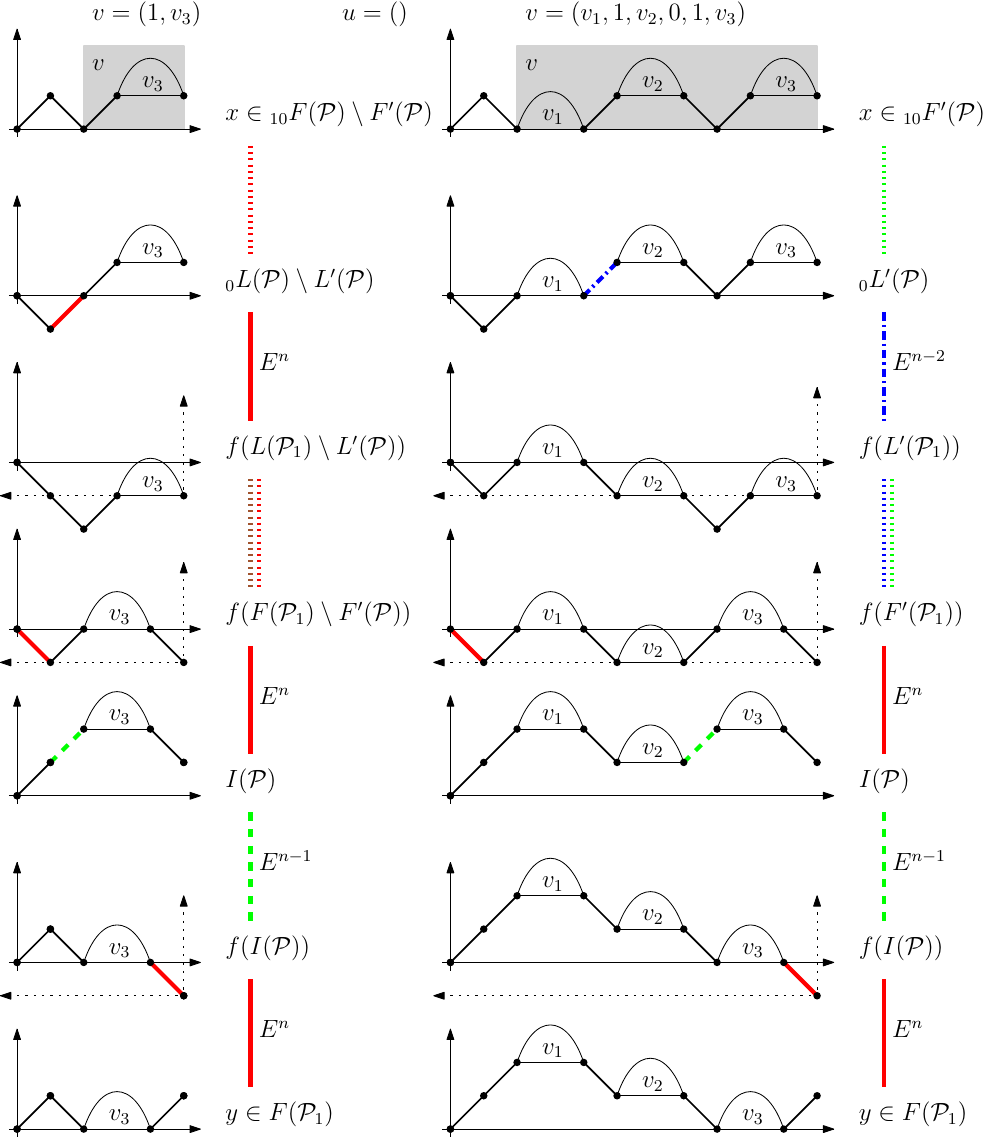}
\caption{Proof of the second part of Lemma~\ref{lem:tree-rot}.
The two columns represent the subpaths from~$x$ to~$y$ on a cycle from our factor for the two cases of~$v$.
Applying $f=\ol{\rev}$ can be interpreted as reading the lattice path backwards, and this is indicated by the dotted coordinate systems.
}
\label{fig:cycle-light}
\end{figure}

The case when $u=()$ is depicted in Figure~\ref{fig:cycle-light} and can be verified analogously.
Note that in this case we in addition visit a vertex from~$I(\cP)$ and from~$f(I(\cP))$ before coming to~$y$.
\end{proof}

The number of cycles in the factor~$\cC_{2n+1}$ is $1,1,1,4,6,19,49,150,442,1424$ for $n=1,2,\ldots,11$.
This sequence matches the first entries of~\cite{flexagons-seq}, i.e., the number of plane trivalent trees with $n$~internal vertices.
A \emph{plane trivalent tree} is a tree where every vertex has degree~1 or~3, and the neighbors of each vertex have a specified cyclic ordering.
We establish the correspondence between cycles from our factor and plane trivalent trees in the following proposition.
This special family of trees appears only in this proposition and its proof.
All subsequent arguments use again the set of rooted trees~$\cT_{n+1}$ introduced before.

\begin{proposition}
For any~$n\geq 1$, the cycles from the factor~$\cC_{2n+1}$ defined in~\eqref{eq:def-C-eq} are in bijection with the set of plane trivalent trees with $n$~internal vertices.
\end{proposition}

\begin{proof}
We first define, for any $x \in D$, binary trees~$\ell(x)$ and~$r(x)$.
If $x=()$, then~$\ell(x)$ and~$r(x)$ consist only of a single vertex.
Otherwise we write~$x$ uniquely as $x = (1,u,0,v) = (u',1,v',0)$ with $u,v,u',v'\in D$, and then $\ell(x)$ consists of a root with left child~$\ell(u)$ and right child~$r(v)$, and $r(x)$ consists of a root with left child~$\ell(u')$ and right child~$r(v')$.
Given any vertex $x\in F(\cP)=D_{2n+1,n+1}^{=0}$, we map the bitstring $x':=(x,0)$ to a trivalent tree~$\tau(x')$ rooted at one of its $n$~internal vertices as follows.
We first write $x'$ uniquely in the form $x'=(1,u,0,v,1,w,0)$ with $u,v,w\in D$, and we define $\tau(x')$ as the tree that consists of a root with left child~$\ell(u)$, middle child~$r(v)$ and right child~$r(w)$.
One can show that under this bijection~$\tau$ between $D_{2n+1,n+1}^{=0}\circ 0$ and trivalent trees rooted at one of their $n$~internal vertices, the mapping~$\rho$ corresponds to rotating the root of the trivalent tree to the leftmost child until the root is again an internal vertex.
Consequently, by Lemma~\ref{lem:tree-rot} cycles from~$\cC_{2n+1}$ correspond to equivalence classes of rooted trivalent trees whose root is one of their $n$~internal vertices under this rotation operation.
Obviously, these are exactly plane trivalent trees with $n$~internal vertices.
\end{proof}

\subsection{Flippable pairs}
\label{sec:flip}

In this section we define certain 6-cycles in the graph~$Q_{2n+1}$ between levels~$n+1$ and~$n+2$ that can be used to join pairs of cycles from our factor~$\cC_{2n+1}$ as described in the beginning of this section (see Figure~\ref{fig:c6xy}).
Let us emphasize here that all modifications of the cycle factor happen only between the two upper levels~$n+1$ and~$n+2$.

We say that two vertices $x,y \in F(\cP)=D_{2n+1,n+1}^{=0}$ form a \emph{flippable pair} $(x,y)$, if~$x$ and~$y$ have the form
\begin{equation}
\label{eq:xy}
\begin{aligned}
  x &= (1,u_1,1,u_2,\ldots,1,u_d,1,1,0,w,0,v_d,0,v_{d-1},0,\ldots,v_1,0,v_0) \enspace, \\
  y &= (1,u_1,1,u_2,\ldots,1,u_d,1,0,1,w,0,v_d,0,v_{d-1},0,\ldots,v_1,0,v_0)
\end{aligned}
\end{equation}
with~$d\geq 0$ and $u_1,\ldots,u_d,v_1,\ldots,v_d,w,(v_0,0) \in D$.
Recall that~$(x,0)$ and~$(y,0)$ can be viewed as rooted trees from~$\cT_{n+1}$.
If~$(x,y)$ is a flippable pair, then the tree~$(y,0)$ is obtained from the tree~$(x,0)$ by moving a pending edge from a vertex in the leftmost subtree to its parent.
Specifically, the pending edge~$(q,r)$ must form the leftmost subtree of a vertex~$q$ in the leftmost subtree of $(x,0)$, and this edge is removed from $q$ and reattached to the parent~$p$ of~$q$ to become the subtree directly left of the edge~$(p,q)$.
We refer to this as a \emph{pull operation}; see the right side of Figure~\ref{fig:tree-ops}.
The inverse operation takes a pending edge~$(p,r)$ in the leftmost subtree of~$(y,0)$, removes this edge from~$p$, and reattaches it as the leftmost subtree of the vertex~$q$ that is the child of~$p$ directly to the right of~$r$.

Any 6-cycle between levels~$n+1$ and~$n+2$ of~$Q_{2n+1}$ can be uniquely encoded as a string of length~$2n+1$ over $\{0,1,*\}$ with $n$ many~1s, $n-2$ many~0s and three~$*$s.
The 6-cycle corresponding to this string is obtained by substituting the three $*$s by all six combinations of at least two different symbols from~$\{0,1\}$.
We define a set of 6-cycles~$\cS_{2n+1}$ between levels~$n+1$ and~$n+2$ of~$Q_{2n+1}$ consisting of all 6-cycles
\begin{equation}
\label{eq:c6xy}
  C_6(x,y):=(u_1,0,u_2,0,\ldots,u_d,0,1,*,*,w,*,v_d,1,v_{d-1},1,\ldots,v_1,1,v_0) \enspace
\end{equation}
for a flippable pair~$(x,y)$, $x,y\in D_{2n+1,n+1}^{=0}$, as in~\eqref{eq:xy}.

Note that $C_6(x,y)\circ 00$ is a 6-cycle in the middle two levels of the $(2n+3)$-cube, and from~\cite[Proposition~3]{gregor-muetze-nummenpalo:18} we obtain the following properties.
For any $x\in D_{2n+1,n+1}^{=0}$, we write $P(x)$ for the path from the set~$\cP$ defined in~\eqref{eq:def-P} that starts at the vertex~$x$.

\begin{lemma}\label{lem:6cycles}
For any~$n\geq 1$, the 6-cycles $C_6(x,y)\in \cS_{2n+1}$ defined in~\eqref{eq:c6xy} have the following properties:
\begin{enumerate}[label=(\roman*)]
\item
Let~$(x,y)$ be a flippable pair.
The 6-cycle $C_6(x,y)$ intersects~$P(x)$ in two non-incident edges and it intersects~$P(y)$ in a single edge.
Moreover, the symmetric difference of the edge sets of the two paths $P(x)\in \cP$ and $P(y)\in \cP$ with the 6-cycle~$C_6(x,y)$ gives two paths~$P'(x)$ and~$P'(y)$ on the union of the vertex sets of~$P(x)$ and~$P(y)$, interconnecting~$x$ with the last vertex of~$P(y)$, and $y$ with the last vertex of~$P(x)$, respectively.

\item
For any flippable pairs~$(x,y)$ and~$(x',y')$, the 6-cycles~$C_6(x,y)$ and~$C_6(x',y')$ are edge-disjoint.

\item
For any flippable pairs~$(x,y)$ and~$(x,y')$, the two pairs of edges that the two 6-cycles~$C_6(x,y)$ and~$C_6(x,y')$ have in common with the path~$P(x)$ are not interleaved, but one pair appears before the other pair along the path.
\end{enumerate}
\end{lemma}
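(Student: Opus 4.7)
My plan is to reduce everything to an explicit bitstring-level analysis using the lexical matchings $M_{2n+1,n+1}^n$ and $M_{2n+1,n+1}^{n+1}$ that together make up the set of paths~$\cP$ defined in~\eqref{eq:def-P}. First I would enumerate the six vertices of $C_6(x,y)$ obtained from the template~\eqref{eq:c6xy} by substituting the three $*$-positions with all assignments other than $000$ and $111$: three such assignments give weight-$1$ substitutions (vertices in level~$n+1$) and three give weight-$2$ substitutions (vertices in level~$n+2$), and they can be arranged cyclically so that consecutive vertices differ in exactly one bit. This gives the explicit 6-cycle.

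Next, for each of the three level-$(n+1)$ vertices $z$ of $C_6(x,y)$, I would determine which of its three neighbors in the 6-cycle is its $n$-lexical partner and which is its $(n+1)$-lexical partner. By the definition of the lexical matchings in Section~\ref{sec:lex}, this amounts to identifying, for each such $z$, the positions of the $n$th and $(n+1)$th $\downstep$-steps of~$z^\uparrow$ when scanned from top to bottom and right to left. The rigid shape of the template~\eqref{eq:c6xy}, with Dyck pieces $u_1,\dots,u_d,w,v_d,\dots,v_0$ and the distinguished pattern $1**w*$, makes this calculation local and direct. Reading off which edges of $C_6(x,y)$ lie in $M_{2n+1,n+1}^n$ and which in $M_{2n+1,n+1}^{n+1}$ then identifies which edges belong to which path of~$\cP$; combining this with Lemma~\ref{lem:paths} and the canonical decomposition $x=(1,u,0,v)$ pins down the starting vertices of these paths and shows that exactly two non-incident edges of $C_6(x,y)$ belong to $P(x)$ and exactly one belongs to $P(y)$. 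The reconnection claim in part~(i) is then a direct verification that taking the symmetric difference rewires the endpoints so that $x$ is joined to the last vertex of $P(y)$ and $y$ to the last vertex of $P(x)$, as claimed.

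For part~(ii), I would argue that any single edge of any 6-cycle $C_6(x,y)$ determines the flippable pair $(x,y)$ uniquely: from the two endpoints of the edge one recovers the positions of the three $*$s (they are the three coordinates with two different bit values when one scans the common surrounding context), and the remaining fixed coordinates read off the Dyck components $u_1,\dots,u_d,w,v_d,\dots,v_0$ in the template~\eqref{eq:c6xy}, which in turn determine $x$ and $y$ via~\eqref{eq:xy}. For part~(iii), I would use that when the first component of the flippable pair is fixed to $x$, the two 6-cycles $C_6(x,y)$ and $C_6(x,y')$ are both obtained from pull operations applied inside the leftmost subtree of the rooted tree $(x,0)\in\cT_{n+1}$. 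The templates~\eqref{eq:c6xy} for the two 6-cycles therefore agree on the right-hand portion determined by $v_d,\dots,v_0$ of $x$, and the edges they share with $P(x)$ are located at the two pulled pending edges and their ``successor'' positions; these positions are either nested or disjoint along $P(x)$, never interleaved.

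The main obstacle will be the second step: the explicit lexical-matching computation that shows, for all six template substitutions, that the $n$th and $(n+1)$th $\downstep$-steps land exactly in the predicted places so that precisely two edges of $C_6(x,y)$ belong to $P(x)$ and exactly one to $P(y)$. This is where the asymmetric placement of the fixed $1$ just before the $*$-block and of the $0$ between the first two $*$s in~\eqref{eq:c6xy} becomes essential, and where one has to use the properties of the canonical decomposition together with Lemma~\ref{lem:lex}. All other parts of the proof are then mechanical consequences of this computation together with the structural description of~$\cP$ given by Lemma~\ref{lem:paths}.
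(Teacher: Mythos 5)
The paper does not prove Lemma~\ref{lem:6cycles}: it cites it as ``proved in~\cite[Proposition~3]{gregor-muetze-nummenpalo:18}'', so there is no in-paper proof to compare your attempt against. Your overall plan for part~(i) --- enumerate the six vertices of the template~\eqref{eq:c6xy}, decide for each level-$(n{+}1)$ vertex which of its 6-cycle neighbours is its $n$-lexical and which is its $(n{+}1)$-lexical partner, and read off the intersection with $P(x)$ and $P(y)$ --- is indeed the natural route, and the ``main obstacle'' you flag (the case analysis over the six substitutions and the careful positioning of the $n$th and $(n{+}1)$th $\downstep$-steps of $z^\uparrow$) is exactly where the real work lies. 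That part of your plan is sound, though at the level of a sketch.

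Your argument for part~(ii) has a concrete gap. You write that ``from the two endpoints of the edge one recovers the positions of the three $*$s (they are the three coordinates with two different bit values when one scans the common surrounding context).'' This is false as stated: the two endpoints of a single edge of the hypercube differ in exactly \emph{one} coordinate, which is one of the three $*$-positions; at the other two $*$-positions the endpoints carry the same bit, so those positions are not distinguishable from fixed $0$- or $1$-positions by looking at the edge's endpoints alone. Recovering the full template (and thus the flippable pair~$(x,y)$) from an edge therefore requires a substantially more delicate argument. One would have to use the rigid Dyck structure of the template---the placement of the fixed $1$ just before the first two $*$s and of the fixed block~$w$ between the second and third $*$, together with the facts that $u_1,\dots,u_d,v_1,\dots,v_d,w,(v_0,0)\in D$---to argue that the surviving bits of the two endpoints force the $*$-positions. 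You assert this but do not supply it, and as written the parenthetical justification you give for it is incorrect. Part~(iii) is plausible in spirit (pulls inside the leftmost subtree produce nested or disjoint intervals along $P(x)$), but it too rides on the explicit edge-identification from part~(i), which you have only outlined.

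In short: the approach is reasonable and in the same spirit as the argument in~\cite{gregor-muetze-nummenpalo:18}, but the proposal is a high-level sketch in which the central step for part~(ii)---unique recovery of the flippable pair from a single edge---rests on a claim whose stated justification does not hold.
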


\begin{figure}
\includegraphics[scale=0.916]{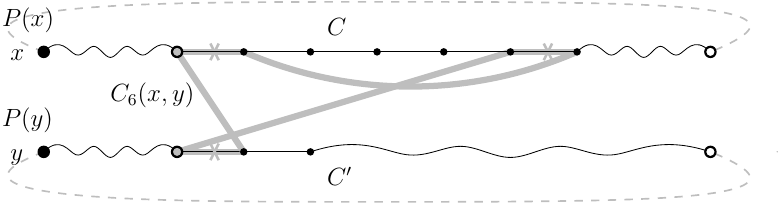}
\caption{
Two cycles from our factor joined by taking the symmetric difference with a 6-cycle.
The paths~$P(x)$ and~$P(y)$ from the set~$\cP$ (solid black) lying on the two cycles traverse the 6-cycle~$C_6(x,y)$ (solid gray) as shown.
The symmetric difference yields paths~$P'(x)$ and~$P'(y)$ that have flipped end vertices.
}
\label{fig:c6xy}
\end{figure}

\subsection{Proof of Theorem~\ref{thm:gmlc4}}

With Lemma~\ref{lem:tree-rot} and \ref{lem:6cycles} in hand, we are now ready to prove Theorem~\ref{thm:gmlc4}.

\begin{proof}[Proof of Theorem~\ref{thm:gmlc4}]
Let~$\cC_{2n+1}$ and~$\cS_{2n+1}$ be the cycle factor and the set of 6-cycles defined in Section~\ref{sec:2factor4} and \ref{sec:flip}, respectively.

Consider two cycles $C,C'$ of $\cC_{2n+1}$ containing paths $P,P'\in\cP$ with first vertices $x,y\in D_{2n+1,n+1}^{=0}$, respectively, such that $(x,y)$ is a flippable pair.
By Lemma~\ref{lem:6cycles}~(i), the symmetric difference of the edge sets~$C\cup C'$ and~$C_6(x,y)$ forms a single cycle on the vertex set of~$C\cup C'$, i.e., this joining operation reduces the number of cycles in the factor by one; see Figure~\ref{fig:c6xy}.
Recall that in terms of rooted trees, $(y,0)\in\cT_{n+1}$ is obtained from $(x,0)\in\cT_{n+1}$ by a pull operation; see Figure~\ref{fig:tree-ops}.

We repeat this joining operation until all cycles in the factor are joined to a single Hamilton cycle.
For this purpose we define an auxiliary graph~$\cH_{n+1}$ whose nodes represent the cycles in the factor~$\cC_{2n+1}$ and whose edges connect pairs of cycles that can be connected to a single cycle with such a joining operation that involves a 6-cycle from the set~$\cS_{2n+1}$.
Formally, the node set of~$\cH_{n+1}$ is given by partitioning the set~$\cT_{n+1}$ into equivalence classes under the mapping~$\rho$.
By Lemma~\ref{lem:tree-rot}, the nodes of~$\cH_{n+1}$ therefore indeed correspond to the cycles in the factor~$\cC_{2n+1}$.
Specifically, each rooted tree $(x,0)\in \cT_{n+1}=D_{2n+1,n+1}^{=0}\circ 0$ belonging to some node of~$\cH_{n+1}$ corresponds to the first vertex~$x$ of some path~$P\in \cP$ such that $P$ lies on the cycle corresponding to that node.
For every flippable pair~$(x,y)$, $x,y\in D_{2n+1,n+1}^{=0}$, we add the edge to~$\cH_{n+1}$ that connects the node containing the tree~$(x,0)$ to the node containing the tree~$(y,0)$.
By our initial argument, such a flippable pair yields a 6-cycle~$C_6(x,y)$ that can be used to join the two corresponding cycles to a single cycle.
As mentioned before, the 6-cycle lies entirely between levels~$n+1$ and~$n+2$.

To complete the proof of Theorem~\ref{thm:gmlc4}, it therefore suffices to prove that the auxiliary graph~$\cH_{n+1}$ is connected.
Indeed, if~$\cH_{n+1}$ is connected, then we can pick a spanning tree in~$\cH_{n+1}$, corresponding to a collection of 6-cycles $\cS'\seq \cS_{2n+1}$, such that the symmetric difference between the edge sets $\cC_{2n+1}\bigtriangleup \cS'$ forms a Hamilton cycle in the middle four levels of~$Q_{2n+1}$.
Of crucial importance here are properties~(ii) and~(iii) in Lemma~\ref{lem:6cycles}, which ensure that whatever subset of 6-cycles we use in this joining process, they will not interfere with each other, guaranteeing that each 6-cycle indeed reduces the number of cycles by one, as desired.

At this point we reduced the problem of proving that the middle four levels of~$Q_{2n+1}$ have a Hamilton cycle to showing that the auxiliary graph~$\cH_{n+1}$ is connected, which is much easier.
Indeed, all we need to show is that any rooted tree from~$\cT_{n+1}$ can be transformed into any other tree from~$\cT_{n+1}$ by a sequence of heavy rotations, light rotations, pulls and their inverse operations; see Figure~\ref{fig:tree-ops}.
It turns out that for our proof we only need light rotations and pulls.

Recall that heavy and light rotations correspond to following the same cycle from~$\cC_{2n+1}$ (staying at the same node in~$\cH_{n+1}$), and a pull corresponds to a joining operation (traversing an edge in~$\cH_{n+1}$ to another node).
For this we show that any rooted tree $x\in \cT_{n+1}$ can be transformed into the special tree $s:=(1,1,0,1,0,\ldots,1,0,0,1,0)\in \cT_{n+1}$, i.e., a right-light tree with a root of degree two and a star as its left subtree.
To achieve this we distinguish three cases; see Figure~\ref{fig:connect}.

\begin{enumerate}[label=(\alph*)]
\item \emph{$x$ is left-light and right-light.}
By a single light rotation we obtain a right-light tree with a root of degree two.
We then repeatedly pull pending edges in the left subtree towards the left child of the root until we end up at the tree~$s$.
    
\item \emph{$x$ is left-light and right-heavy.}
The tree obtained after applying a single light rotation is right-light.
If it is also left-light, then we continue as in case~(a).
Otherwise it is left-heavy, and then we consider the leftmost leaf of it, performing pull operations on this leaf until it becomes adjacent to the root.
We thus obtain a tree that is left-light and right-light, and then we continue as in case~(a).

\item \emph{$x$ is left-heavy.}
We consider the leftmost leaf of~$x$, and repeatedly perform pull operations on this leaf until it becomes adjacent to the root.
We obtain a left-light tree, and then we continue as in cases~(a) or~(b).
\end{enumerate}

\begin{figure}
\includegraphics[scale=0.916]{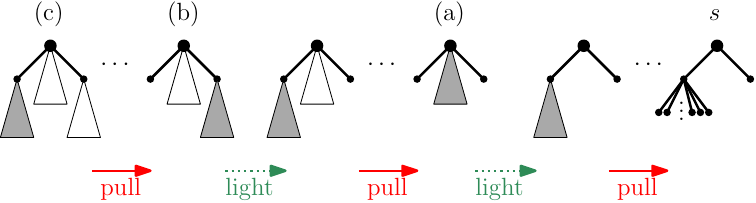}
\caption{
Transformation of trees from~$\cT_{n+1}$ into the tree~$s$ in the proof of Theorem~\ref{thm:gmlc4}.
Shaded subtrees are non-empty.
}
\label{fig:connect}
\end{figure}

This shows that~$\cH_{n+1}$ is connected, and thus completes the proof.
\end{proof}

\section{Computer experiments}
\label{sec:exp}

The numbers of cycles in the factor through the middle $2\ell$~levels of the $(2n+1)$-cube constructed as in the proof of Theorem~\ref{thm:gmlc-2fac} using the two edge-disjoint SCDs~$\cD:=\cD_0$ and~$\cD':=\ol{\cD_0}$ are shown in Table~\ref{tab:2factors1} for $n=1,2,\ldots,12$.
Note that the number of cycles in the middle two levels (the first column~$\ell=1$) seems to grow faster than the number of cycles in the middle four levels (the second column~$\ell=2$).

If instead we perform the proof of Theorem~\ref{thm:gmlc-2fac} with the two edge-disjoint SCDs constructed via our product construction, we obtain another set of cycle factors.
Specifically, denoting by~$\cD_0(a)$ the SCD~$\cD_0$ in the $a$-cube for $a\in\{2,3\}$, then $\cD:=\cD_0(3)\times \cD_0(2)^{(n-3)/2}$ and $\cD':=\ol{\cD_0(3)}\times \ol{\cD_0(2)}^{(n-3)/2}$ are two edge-disjoint SCDs in the $(2n+1)$-cube, where $\times$ denotes the product operation described in the proof of Theorem~\ref{thm:prod}, and the exponent denotes the $(n-3)/2$-fold such product.
Using those SCDs, we obtain cycle factors whose cycle lengths are shown in Table~\ref{tab:2factors2}.

\begin{table}
\caption{Number of cycles of the factor in the middle $2\ell$~levels of the $(2n+1)$-cube for $n=1,2,\ldots,12$ and $1\leq \ell\leq n+1$ arising from the proof of Theorem~\ref{thm:gmlc-2fac} using the two edge-disjoint SCDs~$\cD:=\cD_0$ and~$\cD':=\ol{\cD_0}$.}
\footnotesize
\renewcommand\tabcolsep{2pt}
\begin{tabular}{l|rrrrrrrrrrrrr}
$n$ & $\ell=1$ & 2 & 3 & 4 & 5 & 6 & 7 & 8 & 9 & 10 & 11 & 12 & 13 \\ \hline
1   & 1       & 2       &         &         &         &         &         &         &         &         &         &         &         \\
2   & 2       & 3       & 6       &         &         &         &         &         &         &         &         &         &         \\
3   & 3       & 6       & 19      & 24      &         &         &         &         &         &         &         &         &         \\
4   & 6       & 10      & 58      & 95      & 102     &         &         &         &         &         &         &         &         \\
5   & 12      & 20      & 181     & 350     & 419     & 428     &         &         &         &         &         &         &         \\
6   & 26      & 39      & 552     & 1246    & 1644    & 1749    & 1760    &         &         &         &         &         &         \\
7   & 73      & 74      & 1633    & 4292    & 6263    & 6974    & 7127    & 7140    &         &         &         &         &         \\
8   & 146     & 138     & 4750    & 14560   & 23380   & 27344   & 28546   & 28751   & 28766   &         &         &         &         \\
9   & 360     & 300     & 13500   & 48892   & 86156   & 105890  & 113477  & 115290  & 115559  & 115576  &         &         &         \\
10  & 1408    & 552     & 37716   & 163624  & 314960  & 406559  & 448446  & 461034  & 463696  & 464033  & 464052  &         &         \\
11  & 2412    & 1138    & 103998  & 547614  & 1145771 & 1551226 & 1763481 & 1838964 & 1859347 & 1863014 & 1863431 & 1863452 &         \\
12  & 10204   & 2068    & 284316  & 1836489 & 4156230 & 5892150 & 6904696 & 7315848 & 7448880 & 7479282 & 7484252 & 7484753 & 7484776 \\
\end{tabular}
\label{tab:2factors1}
\end{table}

\begin{table}
\caption{Number of cycles of the factor in the middle $2\ell$~levels of the $(2n+1)$-cube for $n=1,2,\ldots,12$ and $1\leq \ell\leq n+1$ arising from the proof of Theorem~\ref{thm:gmlc-2fac} using the two edge-disjoint SCDs $\cD:=\cD_0(3)\times \cD_0(2)^{(n-3)/2}$ and $\cD':=\ol{\cD_0(3)}\times \ol{\cD_0(2)}^{(n-3)/2}$.}
\footnotesize
\renewcommand\tabcolsep{2pt}
\begin{tabular}{l|rrrrrrrrrrrrr}
$n$ & $\ell=1$ & 2 & 3 & 4 & 5 & 6 & 7 & 8 & 9 & 10 & 11 & 12 & 13 \\ \hline
1   & 1       & 2       &         &         &         &         &         &         &         &         &         &         &         \\
2   & 2       & 3       & 4       &         &         &         &         &         &         &         &         &         &         \\
3   & 3       & 8       & 11      & 12      &         &         &         &         &         &         &         &         &         \\
4   & 10      & 22      & 34      & 39      & 40      &         &         &         &         &         &         &         &         \\
5   & 24      & 68      & 109     & 132     & 139     & 140     &         &         &         &         &         &         &         \\
6   & 80      & 213     & 362     & 456     & 494     & 503     & 504     &         &         &         &         &         &         \\
7   & 239     & 700     & 1225    & 1600    & 1779    & 1836    & 1847    & 1848    &         &         &         &         &         \\
8   & 802     & 2336    & 4222    & 5676    & 6466    & 6770    & 6850    & 6863    & 6864    &         &         &         &         \\
9   & 2638    & 7980    & 14740   & 20324   & 23662   & 25140   & 25617   & 25724   & 25739   & 25740   &         &         &         \\
10  & 9052    & 27618   & 52064   & 73330   & 87068   & 93839   & 96378   & 97084   & 97222   & 97239   & 97240   &         &         \\
11  & 31186   & 96904   & 185628  & 266344  & 321857  & 351676  & 364231  & 368320  & 369319  & 369492  & 369511  & 369512  &         \\
12  & 109460  & 343438  & 667320  & 972989  & 1194550 & 1322256 & 1381274 & 1403006 & 1409266 & 1410630 & 1410842 & 1410863 & 1410864 \\
\end{tabular}
\label{tab:2factors2}
\end{table}

\section{Open problems}
\label{sec:open}

We conclude with some interesting open problems.

\begin{itemize}
\item What are the number and length of cycles in the factors presented in Table~\ref{tab:2factors1} and Table~\ref{tab:2factors2} in terms of~$n$ and~$\ell$, and is there a combinatorial interpretation of those numbers?

\item What properties does our new SCD~$\cD_1$ have, in addition to being edge-disjoint from~$\cD_0$?
Can we exploit this construction with respect to other applications, e.g., Venn diagrams?
Are there other explicit constructions of SCDs in the $n$-cube, different from~$\cD_0$, $\cD_1$, and their complements?

\item We conjecture that the $n$-cube has $\lfloor n/2\rfloor+1$ pairwise edge-disjoint SCDs, and the best known general lower bound is~5 (recall Section~\ref{sec:outlook}).
In particular, it would be very nice to construct more than constantly many edge-disjoint SCDs in the $n$-cube as $n$~grows.
The main difficulty here is that we are missing a simple criterion like Hall's matching condition guaranteeing the existence of an SCD.
\end{itemize}

\section*{Acknowledgements}

We thank the anonymous reviewer for the careful reading and for many thoughtful comments that helped improving the presentation of this paper.

\bibliographystyle{alpha}
\bibliography{../refs}

\newcommand{\etalchar}[1]{$^{#1}$}
\begin{thebibliography}{dBvETK51}

\bibitem[{\'{A}}d{\'a}68]{MR0242572}
A.~{\'{A}}d{\'a}m.
\newblock {\em Truth functions and the problem of their realization by
  two-terminal graphs}.
\newblock Akad\'emiai Kiad\'o, Budapest, 1968.

\bibitem[Aig73]{MR0319772}
M.~Aigner.
\newblock Lexicographic matching in {B}oolean algebras.
\newblock {\em J. Combin. Theory Ser. B}, 14:187--194, 1973.

\bibitem[Bol86]{MR866142}
B.~Bollob\'as.
\newblock {\em Combinatorics: set systems, hypergraphs, families of vectors and
  combinatorial probability}.
\newblock Cambridge University Press, Cambridge, 1986.

\bibitem[BR96]{MR1377601}
B.~Bultena and F.~Ruskey.
\newblock Transition restricted {G}ray codes.
\newblock {\em Electron. J. Combin.}, 3(1):Paper 11, 11 pp., 1996.

\bibitem[BW84]{MR737262}
M.~Buck and D.~Wiedemann.
\newblock Gray codes with restricted density.
\newblock {\em Discrete Math.}, 48(2-3):163--171, 1984.

\bibitem[dBvETK51]{MR0043115}
N.~de~Bruijn, C.~van Ebbenhorst~Tengbergen, and D.~Kruyswijk.
\newblock On the set of divisors of a number.
\newblock {\em Nieuw Arch. Wiskunde (2)}, 23:191--193, 1951.

\bibitem[DDG{\v{S}}13]{MR2974271}
D.~Dimitrov, T.~Dvo{\v{r}}\'ak, P.~Gregor, and R.~{\v{S}}krekovski.
\newblock Linear time construction of a compressed {G}ray code.
\newblock {\em European J. Combin.}, 34(1):69--81, 2013.

\bibitem[DJMS19]{MR4017317}
K.~D\"{a}ubel, S.~J\"{a}ger, T.~M\"{u}tze, and M.~Scheucher.
\newblock On orthogonal symmetric chain decompositions.
\newblock {\em Electron. J. Combin.}, 26(3):Paper No. 3.64, 32, 2019.

\bibitem[DKS94]{MR1268348}
D.~A. Duffus, H.~A. Kierstead, and H.~S. Snevily.
\newblock An explicit {$1$}-factorization in the middle of the {B}oolean
  lattice.
\newblock {\em J. Combin. Theory Ser. A}, 65(2):334--342, 1994.

\bibitem[EHH01]{MR1887372}
M.~El-Hashash and A.~Hassan.
\newblock On the {H}amiltonicity of two subgraphs of the hypercube.
\newblock In {\em Proceedings of the {T}hirty-second {S}outheastern
  {I}nternational {C}onference on {C}ombinatorics, {G}raph {T}heory and
  {C}omputing ({B}aton {R}ouge, {LA}, 2001)}, volume 148, pages 7--32, 2001.

\bibitem[GG03]{MR2014514}
L.~Goddyn and P.~Gvozdjak.
\newblock Binary {G}ray codes with long bit runs.
\newblock {\em Electron. J. Combin.}, 10:Paper 27, 10 pp., 2003.

\bibitem[GJM{\etalchar{+}}18]{DBLP:conf/icalp/GregorJMSW18}
P.~Gregor, S.~J{\"{a}}ger, T.~M{\"{u}}tze, J.~Sawada, and K.~Wille.
\newblock Gray codes and symmetric chains.
\newblock In {\em 45th International Colloquium on Automata, Languages, and
  Programming, {ICALP} 2018, July 9-13, 2018, Prague, Czech Republic}, pages
  66:1--66:14, 2018.

\bibitem[GK76]{MR0389608}
C.~Greene and D.~J. Kleitman.
\newblock Strong versions of {S}perner's theorem.
\newblock {\em J. Combin. Theory Ser. A}, 20(1):80--88, 1976.

\bibitem[GKS04]{MR2034416}
J.~Griggs, C.~E. Killian, and C.~D. Savage.
\newblock Venn diagrams and symmetric chain decompositions in the {B}oolean
  lattice.
\newblock {\em Electron. J. Combin.}, 11(1):Paper 2, 30 pp., 2004.

\bibitem[GM18]{MR3758308}
P.~Gregor and T.~M\"utze.
\newblock Trimming and gluing {G}ray codes.
\newblock {\em Theoret. Comput. Sci.}, 714:74--95, 2018.

\bibitem[GMM20]{DBLP:conf/icalp/GregorMM20}
P.~Gregor, O.~Mi\v{c}ka, and T.~M{\"{u}}tze.
\newblock On the central levels problem.
\newblock In A.~Czumaj, A.~Dawar, and E.~Merelli, editors, {\em 47th
  International Colloquium on Automata, Languages, and Programming, {ICALP}
  2020}, volume 168 of {\em LIPIcs}, pages 60:1--60:17. Schloss Dagstuhl, 2020.

\bibitem[GMN18]{gregor-muetze-nummenpalo:18}
P.~Gregor, T.~M{\"u}tze, and J.~Nummenpalo.
\newblock A short proof of the middle levels theorem.
\newblock {\em Discrete Analysis}, 2018:8:12 pp., 2018.

\bibitem[Gra53]{gray:patent}
F.~Gray.
\newblock Pulse code communication, 1953.
\newblock March 17, 1953 (filed Nov. 1947). U.S. Patent 2,632,058.

\bibitem[G{\v{S}}10]{MR2609124}
P.~Gregor and R.~{\v{S}}krekovski.
\newblock On generalized middle-level problem.
\newblock {\em Inform. Sci.}, 180(12):2448--2457, 2010.

\bibitem[Hol17]{MR3599935}
A.~E. Holroyd.
\newblock Perfect snake-in-the-box codes for rank modulation.
\newblock {\em IEEE Trans. Inform. Theory}, 63(1):104--110, 2017.

\bibitem[HRW12]{MR2925746}
A.~E. Holroyd, F.~Ruskey, and A.~Williams.
\newblock Shorthand universal cycles for permutations.
\newblock {\em Algorithmica}, 64(2):215--245, 2012.

\bibitem[Joh09]{MR2548540}
J.~R. Johnson.
\newblock Universal cycles for permutations.
\newblock {\em Discrete Math.}, 309(17):5264--5270, 2009.

\bibitem[Joh11]{MR2836824}
J.~R. Johnson.
\newblock An inductive construction for {H}amilton cycles in {K}neser graphs.
\newblock {\em Electron. J. Combin.}, 18(1):Paper 189, 12 pp., 2011.

\bibitem[Knu11]{MR3444818}
D.~E. Knuth.
\newblock {\em The Art of Computer Programming. {V}ol. 4{A}. {C}ombinatorial
  Algorithms. {P}art 1}.
\newblock Addison-Wesley, Upper Saddle River, NJ, 2011.

\bibitem[KT88]{MR962224}
H.~A. Kierstead and W.~T. Trotter.
\newblock Explicit matchings in the middle levels of the {B}oolean lattice.
\newblock {\em Order}, 5(2):163--171, 1988.

\bibitem[LS03]{locke-stong:03}
S.~Locke and R.~Stong.
\newblock Problem 10892: Spanning cycles in hypercubes.
\newblock {\em Amer. Math. Monthly}, 110:440--441, 2003.

\bibitem[MN20]{MR4075363}
T.~M\"{u}tze and J.~Nummenpalo.
\newblock A constant-time algorithm for middle levels {G}ray codes.
\newblock {\em Algorithmica}, 82(5):1239--1258, 2020.

\bibitem[MNW21]{muetze-nummenpalo-walczak:21}
T.~M{\"u}tze, J.~Nummenpalo, and B.~Walczak.
\newblock Sparse {K}neser graphs are {H}amiltonian.
\newblock To appear in {\it J.\ London\ Math.\ Soc.}; preprint available at
  \url{https://arxiv.org/abs/1711.01636}, 2021.

\bibitem[M{\"u}t16]{MR3483129}
T.~M{\"u}tze.
\newblock Proof of the middle levels conjecture.
\newblock {\em Proc. Lond. Math. Soc.}, 112(4):677--713, 2016.

\bibitem[OEI17]{flexagons-seq}
{The On-Line Encyclopedia of Integer Sequences, Sequence A001683}.
\newblock \url{http://oeis.org}, 2017.

\bibitem[Pik99]{MR1765729}
O.~Pikhurko.
\newblock On edge decompositions of posets.
\newblock {\em Order}, 16(3):231--244 (2000), 1999.

\bibitem[RSW06]{MR2268388}
F.~Ruskey, C.~D. Savage, and S.~Wagon.
\newblock The search for simple symmetric {V}enn diagrams.
\newblock {\em Notices Amer. Math. Soc.}, 53(11):1304--1312, 2006.

\bibitem[Sav93]{MR1275228}
C.~D. Savage.
\newblock Long cycles in the middle two levels of the {B}oolean lattice.
\newblock {\em Ars Combin.}, 35(A):97--108, 1993.

\bibitem[Sav97]{MR1491049}
C.~D. Savage.
\newblock A survey of combinatorial {G}ray codes.
\newblock {\em SIAM Rev.}, 39(4):605--629, 1997.

\bibitem[SK79]{MR532807}
J.~Shearer and D.~J. Kleitman.
\newblock Probabilities of independent choices being ordered.
\newblock {\em Stud. Appl. Math.}, 60(3):271--276, 1979.

\bibitem[Spi17]{spink:17}
H.~Spink.
\newblock Orthogonal symmetric chain decompositions of hypercubes.
\newblock {\it arXiv:1706.08545}, June 2017.

\bibitem[ST14]{MR3268651}
N.~Streib and W.~T. Trotter.
\newblock Hamiltonian cycles and symmetric chains in {B}oolean lattices.
\newblock {\em Graphs Combin.}, 30(6):1565--1586, 2014.

\bibitem[Sta15]{MR3467982}
R.~P. Stanley.
\newblock {\em Catalan numbers}.
\newblock Cambridge University Press, New York, 2015.

\bibitem[SvZ08]{MR2427745}
I.~N. Suparta and A.~J. van Zanten.
\newblock A construction of {G}ray codes inducing complete graphs.
\newblock {\em Discrete Math.}, 308(18):4124--4132, 2008.

\bibitem[SW95]{MR1329390}
C.~D. Savage and P.~Winkler.
\newblock Monotone {G}ray codes and the middle levels problem.
\newblock {\em J. Combin. Theory Ser. A}, 70(2):230--248, 1995.

\bibitem[SW18]{DBLP:conf/soda/SawadaW18}
J.~Sawada and A.~Williams.
\newblock A {H}amilton path for the sigma-tau problem.
\newblock In {\em Proceedings of the Twenty-Ninth Annual {ACM-SIAM} Symposium
  on Discrete Algorithms, {SODA} 2018, New Orleans, LA, USA, January 7-10,
  2018}, pages 568--575, 2018.

\bibitem[SW19]{shen_williams_2015}
X.~S. Shen and A.~Williams.
\newblock A middle levels conjecture for multiset permutations with
  uniform-frequency.
\newblock Williams College Technical Report CSTR-201901. Available at {\it
  http://tmuetze.de/papers/multiset.pdf}, 2019.

\bibitem[WW77]{MR0450151}
D.~E. White and S.~G. Williamson.
\newblock Recursive matching algorithms and linear orders on the subset
  lattice.
\newblock {\em J. Combin. Theory Ser. A}, 23(2):117--127, 1977.

\end{thebibliography}

\end{document}